\documentclass[10pt, a4paper, twoside]{article}
\usepackage{a4}

\usepackage{amsfonts}
\usepackage{amsmath}
\usepackage{amssymb}
\usepackage{mathrsfs}
\usepackage{vmargin}

\long\def\symbolfootnote[#1]#2{\begingroup%
\def\thefootnote{\fnsymbol{footnote}}\footnote[#1]{#2}\endgroup}

\setmarginsrb{20mm}{20mm}{20mm}{20mm}{10mm}{10mm}{10mm}{10mm}

\RequirePackage{amsthm}

\theoremstyle{plain}
\newtheorem{theorem}{Theorem}
\newtheorem{lem}{Lemma}
\newtheorem{cor}{Corollary}
\newtheorem{observ}{Observation}

\theoremstyle{definition}
\newtheorem{definition}{Definition}
\newtheorem{rem}{Remark}
\newtheorem{ex}{Example}

\newcommand{\Om}{\Omega}
\newcommand{\dist}{{\rm dist}}
\newcommand{\bd}{\partial}
\newcommand{\dM}{d_{\rm{inn}}}

\newcommand{\distM}{{\rm dist}_{\rm{inn}}}
\newcommand{\distMa}{{\rm dist}_{\rm{M}}}
\newcommand{\distMst}{{\rm dist}^{*}_M}
\newcommand{\diam}{{\rm diam}}
\newcommand{\calF}{\mathcal{F}}
\newcommand{\R}{\mathbb{R}}
\newcommand{\N}{{\mathbb N}}
\newcommand{\Rn}{{\mathbb R}^n}
\newcommand{\ga}{\gamma}

\newcommand{\fv}{f^{-1}}
\newcommand{\clOmP}{{\overline{\Om}\mspace{1mu}}^P}
\newcommand{\ovOm}{\overline{\Om}}
\newcommand{\ovD}{\overline{D}}
\newcommand{\bdySP}{\partial_{\rm SP}}
\newcommand{\bdyP}{\partial_{\rm P}}
\newcommand{\bdyPF}{\partial_{\rm PF}}
\newcommand{\modq}{{\rm Mod}_Q}
\newcommand{\modp}{{\rm Mod}_p}
\newcommand{\modn}{{\rm Mod}_n}
\newcommand{\Mod}{{\rm Mod}}

\newcommand{\kom}[1]{}
\renewcommand{\kom}[1]{{\bf [#1]}}

\begin{document}

\title {Prime ends in metric spaces and boundary extensions of mappings}

\author{
Tomasz Adamowicz{\small{$^1$}}
\\
\it\small Institute of Mathematics, Polish Academy of Sciences \\
\it\small ul. \'Sniadeckich 8, 00-656 Warsaw, Poland\/{\rm ;}
\it\small T.Adamowicz@impan.pl
}

\date{}
\maketitle

\footnotetext[1]{T. Adamowicz was supported by a grant Iuventus Plus of the Ministry of Science and Higher Education of the Republic of Poland, Nr 0009/IP3/2015/73.}


\begin{abstract}
 By using the inner diameter distance condition we define and investigate new, in such a generality, class $\calF$ of homeomorphisms between domains in metric spaces and show that, under additional assumptions on domains, $\calF$ contains (quasi)conformal, bi-Lipschitz and quasisymmetric mappings as illustrated by examples. Moreover, we employ a prime ends theory in metric spaces and provide conditions allowing continuous and homeomorphic extensions of mappings in $\calF$  to topological closures of domains, as well as homeomorphic extensions to the prime end boundary. Domains satisfying the bounded turning condition, locally and finitely connected at the boundary and the structure of prime end boundaries for such domains play a crucial role in our investigations.

 We apply our results to show the Koebe theorem on arcwise limits for mappings in $\calF$. Furthermore, relations between the Royden boundary and the prime end boundary are presented.

 Our work generalizes results due to Carath\'eodory, N\"akki, V\"ais\"al\"a and Zori\v c.
\newline
\newline \emph{Keywords}: bi-Lipschitz, bounded turning, collared, extension, finitely connected at the boundary, linearly connected, homeomorphism, inner diameter distance, Koebe, Mazurkiewicz, modulus of curve families, prime end, quasiconformal, quasisymmetric, Royden
\newline
\newline
\emph{Mathematics Subject Classification (2010):} Primary: 30D40; Secondary: 30L10, 31B25, 30C65.
\end{abstract}

\section{Introduction}

 The extension problem for mappings between two open domains has been studied in various settings and for various kinds of extension properties. The classical setting, from which our studies are originated, includes domains in the Euclidean spaces and conformal mappings. In 1913 Carath\'eodory~\cite{car1} and Osgood--Taylor~\cite{ot} proved  independently that a conformal mapping between Jordan domains extends to a homeomorphism of the closures of domains. Counterexamples, such as a slit-disc, show that in general a homeomorphic extension need not be possible. However, Carath\'eodory~\cite{car1} created and studied an abstract type of a boundary, the so-called prime end boundary and proved that for a planar simply-connected domains a homeomorphic extension of a conformal map is possible with respect to the prime end closure of the target domain. Subsequently, prime ends and their properties have been studied in more general domains and in higher dimensions, see Section~\ref{sec-pet} for further references. Moreover, prime ends have been employed to investigate other topics, e.g. the theory of continua, see Carmona--Pommerenke~\cite{cp1}, local connectivity, see Rempe~\cite{rem}, the dynamical systems, see Koropecki--Le Calvez--Nassiri~\cite{kcn}, the boundary behavior of solutions to elliptic PDEs, see Ancona~\cite{An} and the studies of the $p$-harmonic Dirichlet problem in metric spaces, see Estep--Shanmugalingam~\cite{es}.

 Another direction of related studies arises from quasiconformal mappings (qc-mappings for short). In this setting the higher-dimensional counterpart of the Carath\'eodory--Osgood--Taylor theorem for Jordan domains fails to exist as showed by Kuusalo~\cite{kus}. Nevertheless, for Euclidean domains locally connected at the boundary and quasiconformally collared the homeomorphic extension of a qc-mapping exists, see V\"ais\"al\"a~\cite[Section 17]{va1} and Gehring~\cite{geh}; see also Gehring--Martio~\cite{gem}, Herron--Koskela~\cite{hek} and N\"akki~\cite{na1, na2} for further boundary properties of domains implying the homeomorphic and continuous extension properties. Moreover, results concerning homeomorphic extension of a qc-mapping with respect to prime end closures of a target domain are due to N\"akki~\cite{na} and V\"ais\"al\"a~\cite{va2}.

 Let us also mention that the extension problems have been investigated in the context of other generalizations of (quasi)conformal mappings, such as $Q$-homeomorphisms, see e.g. Chapter 13 in Martio--Ryazanov--Srebro--Yakubov~\cite{mrsy}.

 The extension results presented above are studied largely for Euclidean domains. The main goal of our work and the key novelty is to study extension properties for a class of homeomorphisms between domains in metric spaces defined via the internal diameter distance, denoted by $\distM$, see Definition~\ref{def-dM}. We extract a condition essential for continuous and homeomorphic extensions to the topological closures and closures with respect to the prime end boundary. Namely, we say that a homeomorphism $f:\Om \to\Om'$ belongs to a class $\calF(\Om, \Om')$ if the following condition holds for any two connected sets $E, F\subsetneq \Om$:
\[
  \distM(E, F) =0 \quad \hbox{if and only if} \quad \distM(f(E), f(F))=0,
\]
 where $\Om\subset X$ and $\Om'\subset Y$ are bounded domains in complete doubling quasiconvex metric spaces $X$ and $Y$. See Definition~\ref{class-map} and Section~\ref{sect-map-class} for details and further remarks. According to our best knowledge, class $\calF$ has not been studied before in such a general setting. We illustrate the above definition in Examples~\ref{ex1}-\ref{ex-qs}, where we show that, under additional assumptions on domains, conformal, quasiconformal, bi-Lipschitz and quasisymmetric mappings belong to class $\calF$. Among the boundary properties of domains employed in our studies let us mention the local and finite connectedness at the boundary, also collardness, qc-flatness, uniformity and the bounded turning condition, see Preliminaries.

 The essential role in our studies is played by a prime ends theory for domains in metric spaces developed recently in \cite{abbs, es}, as extension conditions are stated in terms of prime ends. We recall basic definitions for prime ends in Section~\ref{sec-pet}. Further properties of prime ends such as the structure of the prime end boundary for domains finitely (locally) connected at the boundary are recalled in Section~\ref{sect-mainr} where they are applied, see Theorems~\ref{thm-fin-con-homeo}-\ref{thm-clOmm-cpt-new} and Corollary~\ref{cor-loc-conn}. In the same section we prove the key lemma of the paper, Lemma~\ref{lem-maps}. The result shows that mappings in class $\calF$ map chains of sets to chains and preserve equivalency of chains, giving rise to the corresponding properties for (prime) ends. Moreover, for a mapping in $\calF$ and a target domain finitely connected at the boundary it holds that a preimage of a singleton prime end is a singleton, that is both prime ends have singleton impressions (see Definition~\ref{def-chain}\eqref{impr}). Using Lemma~\ref{lem-maps} we establish the following three types of extension results for homeomorphisms in $\calF$:
\begin{itemize}
\item[(1)] An extension to a continuous map between closures of domains
 (Theorem~\ref{Thm1-homeo-ext}).

 In particular, if a domain $\Om$ satisfies the bounded turning condition and a domain $\Om'$ is finitely connected at the boundary, then $f\in \calF(\Om, \Om')$ extends continuously to a mapping $F: \overline{\Om}\to \overline{\Om'}$, cf. Corollary~\ref{cor-homeo-ext}.

\item[(2)] An extension to a homeomorphism between closures of domains (Theorem~\ref{Thm1-homeo-ext-nec}).
\item[(3)] An extension to a homeomorphism between the topological and the prime end boundaries (Theorem~\ref{Thm1-homeo-pext}).

    As corollaries we retrieve results for qc-mappings: a counterpart of Theorem 4.2 in N\"akki~\cite{na} in collared domains, see Corollary~\ref{Thm1-quasiconf-pext} and an extension result for target domains finitely connected at the boundary due to V\"ais\"al\"a~\cite[Section 3.1]{va2}, see Corollary~\ref{Thm1-quasiconf-pext-finit}.
\end{itemize}

Last section is devoted to studies of two applications of extension results: in Section~\ref{sect-koebe} we present a variant of the Koebe theorem, see Theorem~\ref{thm-koebe}. Namely, we show that mappings in class $\calF$ between domains locally and finitely connected at the boundary, respectively, have arcwise limits along end-cuts.

Section~\ref{sect-royden} brings on stage another type of abstract boundary, the Royden boundary. Upon stating the necessary definitions we recall an extension criterion for quasiconformal mappings expressed in terms of fibers, see Theorem~\ref{thm51-sod}. Then we show that for Euclidean domains finitely connected at the boundary fibers over a given boundary point correspond to prime ends with impressions at this point, see Theorem~\ref{thm-Royden-prime}. As a corollary, we obtain an upper estimate for a number of components of fibers in the Royden compactification for John domains, see Corollary~\ref{cor-royden-john}.

\section{Preliminaries}

In what follows by $(X, d_X)$ we denote a metric space $X$ with a distance function $d_X$, while $\Om\subset X$ is an open connected subset of $X$. By a curve in $X$ we understand a continuous mapping $\ga:[a, b]\to X$. The image of $\ga$ (loci) is defined as $|\ga|=\ga([a,b])$. The length of $\ga$ is denoted by $l(\ga)$ and we say that $\ga$ is rectifiable if $l(\ga)<\infty$. Every rectifiable curve admits the so-called arc-length parametrization, see e.g. Theorem 3.2 in Haj\l asz~\cite{haj}.

Let $\gamma$ be a curve in $\Om$. We define its diameter as follows:
\begin{equation*}
 \diam\,\gamma:= \sup d_X(x,y),
\end{equation*}
where the supremum is taken over all points $x,y\in \gamma$.

The following metrics will play an important role in the paper, as they are used in the definition of chain and (prime) ends (see Definitions~\ref{def-chain} and \ref{def-end}) and in the definition of the main class of mappings studied in the paper, see Definition~\ref{class-map}.

\begin{definition}\label{def-dM}
We define the \emph{inner diameter distance} $\dM$ on $\Om$ by
\[
     \dM(x,y) =\inf \diam\,\gamma,
\]
where the infimum is taken over all locally rectifiable curves $\gamma$ joining $x,y \in \Om$ such that $\gamma\subset \Om$.
\end{definition}
The definition of $\dM$ naturally extends to the distance between two sets $E, F\subset \Om$ denoted $\distM(E, F)$.

 The above metric is more commonly called \emph{relative diameter distance}, sometimes known as \emph{in\-ternal/in\-trin\-sic diameter distance}. Clearly, $\dM$ is a metric on $\Om$. One can also define the similar distance function, the so-called \emph{Mazurkiewicz metric} between points $x$ and $y$ by taking the infimum of diameters of all connected sets in $\Om$ containing $x,y$, cf. Definition 8.10 in~\cite{abbs}. Let us denote such metric by $d_{M}$. See also Bj\"orn--Bj\"orn--Shanmugalingam~\cite{bbs1, bbs2} for further studies on Mazurkiewicz distance and its role in the geometry of sets and nonlinear potential theory on metric spaces, see also Mazurkiewicz~\cite{maz}. The following relations between the aforementioned metrics hold for all $x, y\in \Om$:
\begin{equation}\label{metric-rel}
\dM(x,y)\ge d_{M}(x,y) \ge d_{X}(x,y).
\end{equation}
In particular, if $\dM(x, y)=0$, then $d_{M}(x,y)=0$ and $d_X(x,y)=0$.

\begin{definition}[Linearly connected space]\label{lin-con}
We say that a complete metric space $(X, d_X)$ is \emph{$L$-linearly connected} for some $L\geq 1$, if for all $x, y\in X$ there exists a continuum $K$ containing $x$ and $y$ such that
\[
 \diam K\leq L d(x,y).
\]
\end{definition}

Let us comment on the above definition.

\begin{rem}\label{rem-lin-con}$\phantom{AAA}$

\begin{enumerate}
\item The $L$-linearly connected space is also known as the space satisfying the \emph{$L$-bounded turning condition} or \emph{LLC(1)}, cf. e.g. Hakobyan--Herron~\cite[Sections 2.B, 4.B]{hakher} and MacKay~\cite{mack}.
\item It turns out that in Definition~\ref{lin-con} one may assume that $K$ is an arc, at the cost of possibly increasing $L$ by an arbitrarily small amount, see e.g.~\cite{mack}.
\item A linearly connected domain is locally connected and locally connected at every point of its boundary, see e.g. discussion in~\cite{hakher}.
 \end{enumerate}
\end{rem}

\medskip

\noindent{\bf Boundary of domains in metric spaces}

\smallskip
In what follows we will study the boundary behavior of mappings. For this reason, we gather the necessary  definitions of various types of boundary points and related domains. Since definitions and examples below rely or are related to the notion of the curve modulus, we will first recall it, see e.g. V\"ais\"al\"a~\cite[Chapter 6]{va1} for discussion in the Euclidean setting and Heinonen--Koskela~\cite{hk} and Heinonen~\cite[Chapter 7]{hei01} for the definitions and applications of the modulus in metric measure spaces.

Let $\Gamma$ be a family of curves in a domain $\Om\subset X$ and let $1\leq p <\infty$. Then the \emph{$p$-modulus of curve family $\Gamma$} is defined as follows:
\[
 \modp \Gamma:=\inf_{\varrho\in F(\Gamma)}\int_{X}\varrho^p d\mu,
\]
where $\mu$ is a Borel regular measure in metric space $X$, whereas $F(\Gamma)$ stands for the set of admissible functions. Namely, a nonnegative Borel function $\varrho:X \to[0,\infty]$ is \emph{admissible for $\Gamma$} if
\[
 \int_{\gamma} \varrho ds \geq 1,
\]
for every locally rectifiable $\gamma\in \Gamma$. If $F(\Gamma)$ is empty, then by convention we define $\modp \Gamma =\infty$. This happens if $\Gamma$ contains a constant curve. Among fundamental properties of the $p$-modulus we mention that it defines an outer measure on the set of all curves in $X$.

Let $\Omega \subset X$ be a domain and let $E, F \subset \Omega$. By $\modp(E, F, \Om)$ we denote the modulus of the curve family $\Gamma(E, F, \Omega)$ consisting of all rectifiable curves $\ga$ in $\Omega$ which join $E$ and $F$, i.e. one of the endpoints of $\ga$ belongs to $E$, the other to $F$ and $\ga\setminus(E\cup F)\subset \Om$.

\begin{definition}\label{qc-flat}
We say that a domain $\Omega$ in a metric space $(X, d_X)$ is \emph{quasiconformally flat}(\emph{QC-flat} for short) at $x\in \bd \Om$ if for any pair of connected subsets $E, F \subseteq \Omega$ such that $ x\in \overline{E} \cap \overline{F}$ we have $\modq (E,F,\Omega)=\infty$.
\end{definition}

In order to illustrate the definition let us mention that a locally $Q$-Loewner uniform domain is QC-flat at every boundary point as follows from Fact 2.12 in Herron~\cite{herCam}.

Next, we define a class of collared domains which plays an important role in the studies of boundary behavior of quasiconformal mappings, see V\"ais\"al\"a~\cite[Chapter 17]{va1}, N\"akki~\cite{na} in the Euclidean setting and
\cite[Section 3.3]{aw2} for the studies in Heisenberg group $\mathbb{H}_1$.

\begin{definition}\label{def-collar}
 Let $\Om\subset \Rn$ be a domain and let $x\in \bd \Om$. We say that $\Om$ is \emph{quasiconformally collared at $x$} (\emph{collared} for short) if there exists a neighborhood $U$ of $x$ (in $\Rn$) and a homeomorphism $g$ from $U\cap \overline{\Om}$ onto $\{x\in \Rn: |x|<1, x_n\geq 0\}$ such that $g$ restricted to $U\cap \Om$ is quasiconformal.

Notice that by Topology $g$ maps $U\cap \bd \Om$ to a $(n-1)$-dimensional ball in $\Rn$.
\end{definition}

The following class of domains is crucial from the point of view of extension results studied below, as well as, from the perspective of prime ends theory, as it turns out that prime end boundaries have particularly simple structure in such domains, see Theorem~\ref{thm-fin-con-homeo} below and discussion following it, see also Sections 10 and 11 in \cite{abbs}.

\begin{definition}\label{def-fin-con}
We say that $\Omega\subset X$ is \emph{finitely connected at a point} $x \in \bd \Om$ if for every $r>0$ there exists a bounded open set $U$ in $X$ containing $x$ such that $x\in U\subset B(x, r)$ and $U \cap \Omega$ has has only finitely many components. If $\Omega$ is finitely connected at every boundary point, then we say it is finitely connected at the boundary.

In particular, if $U\cap \Om$ has exactly one component, then we say that $\Om$ is \emph{locally connected at} $x\in \bd \Om$.
\end{definition}

If a domain $\Om$ satisfies one of the conditions of Definition~\ref{def-fin-con} at every boundary point, then we say that $\Om$ is, respectively, \emph{finitely (locally) connected at the boundary}.

\begin{ex}[domains finitely and locally connected at the boundary]\label{ex-domains}
 The following domains are finitely connected at the boundary:
 \begin{itemize}
 \item[(1)] John domains in complete metric spaces, see \cite[Theorem 11.3]{abbs}
 \item[(2)] Weakly linearly locally connected domains in $\R^n$ (WLLC domains), see Herron--Koskela~\cite[Section 2]{hek}
 \item[(3)] Sobolev capacity domains (SC domains), see~\cite[Section 2]{hek}
 \item[(4)] Almost John domains, see Definition 11.4 and Theorem 11.5 in \cite{abbs}
 \item[(5)] Collared domains, see Definition~\ref{def-collar} and V\"ais\"al\"a~\cite[Theorem 17.10]{va1} in Euclidean setting; see also Definition 3.9 and Observation 3.2 in \cite{aw2} in the setting of $\mathbb{H}_1$.
\end{itemize}

 The following domains are locally connected at the boundary:
 \begin{itemize}
 \item[(1)] Uniform domains in complete metric spaces, see \cite[Proposition 11.2]{abbs}
 \item[(2)] Linearly locally connected domains in $\R^n$ (LLC domains), see~\cite[Section 2]{hek}
 \item[(3)] Quasiextremal distance domains (QED), see~\cite[Section 2]{hek}
 \item[(4)] Linearly connected domains, see Hakobyan--Herron~\cite[Section 2.C]{hakher}
 \item[(5)] Jordan domains in $\R^n$, see \cite[Definition 17.19, Theorem 17.20]{va1}.
 \end{itemize}
\end{ex}

\noindent{\bf Quasiconformal mappings and their counterparts in metric spaces}

\smallskip
Our studies for mappings largely grow from the similar studies for (quasi)conformal mappings. For the rudimentary properties of quasiconformal mappings we refer to e.g. \cite{va1} in Euclidean setting and \cite{hk} in the metric setting. Relations between several definitions of quasiconformal mappings in metric measure spaces, including conditions implying their equivalence, are presented e.g. in Koskela--Wildrick~\cite{kw} and Williams~\cite{wil}. Below we employ the following definition.

\begin{definition}\label{def-qc}
 Let $(X, d_X)$ be a $Q$-regular metric measure space $(Q>1)$ and $\Om, \Om'\subset X$ be domains (not necessarily bounded). A homeomorphism $f: \Om \to \Om'$ is called \emph{$K_f$-quasiconformal in $\Om$} if there exists a constant $K_f\geq 1$ such that for any family of curves $\Gamma$ in $\Om$ we have
\begin{equation}\label{def-qc-mod}
 \frac{1}{K_f}\modq(\Gamma)\leq  \modq(f\Gamma ) \leq K_f \modq(\Gamma).
\end{equation}
If $X=\R^n$ and $K_f=1$, then $f$ is conformal.
\end{definition}

In next definition we present another generalization of quasiconformal mappings in metric spaces, the so-called quasisymmetric mappings. The concept was introduced by Ahlfors--Buerling~\cite{ahlb} in the context of the boundary behavior of planar quasiconformal mappings and in the metric spaces by Tukia--V\"ais\"al\"a~\cite{tuv}. We remark that among examples of quasisymmetric mappings there are bi-Lipschitz mappings, and quasiconformal mappings between domains in $\R^n$ can be characterized as locally quasisymmetric, see Heinonen~\cite[Theorems 11.14 and 11.19]{hei01} and Section 4 in Heinonen--Koskela~\cite{hk}.

\begin{definition}\label{def-qsym}
 Let $(X, d_X)$ and $(Y, d_Y)$ be metric spaces. A homeomorphism $f: X \to Y$ is called \emph{$\eta$-quasisymmeric} if there exists a homeomorphism $\eta:[0,\infty)\to [0,\infty)$ such that the following condition holds for all triples $a,b, x$ of distinct points in $X$:
\begin{equation*}\label{def-qsym-cond}
 \frac{d_Y(f(x), f(a))}{d_Y(f(x), f(b))}\leq \eta\left(\frac{d_X(x, a)}{d_X(x, b)}\right).
\end{equation*}
\end{definition}

We refer to \cite{hei01} and \cite{hk} for further properties of quasisymmetric mappings and to~\cite{kw} for a survey of relations between quasisymmetricity and various definitions of quasiconformal maps (in particular, see Theorem 4.3 in \cite{kw}).

\begin{definition}\label{def-bilip}
 Let $(X, d_X)$ and $(Y, d_Y)$ be metric spaces and $\Om \subset X$, $\Om' \subset Y$ be domains. A map $f: \Om \to \Om'$ is called \emph{$L$-bi-Lipschitz} if there exists a constant $L$ such that the following condition holds
\begin{equation*}\label{def-bilip-ineq}
\frac{1}{L}d_X(x,y)\leq d_Y(f(x), f(y))\leq Ld_X(x,y)\quad  \hbox{for any } x, y \in \Om.
\end{equation*}
\end{definition}

In order to see these mappings in the wider context, let us recall that in Euclidean setting, an orientation preserving bi-Lipschitz map is a map of bounded length distortion (the so-called BLD-map), and thus quasiregular.
See Chapter 14.78 in Heinonen--Kilpel\"ainen--Martio~\cite{hkm} and further references therein. Furthermore, in the setting of metric spaces, one observes that an $L$-bi-Lipschitz map is $\eta$-quasisymmetric with $\eta(t)=L^2t$ (cf. Definition~\ref{def-qsym}), see e.g. Chapter 10 in Heinonen~\cite{hei01}.

\section{Prime ends in metric spaces}\label{sec-pet}

The first theory of prime ends is due to Carath\'eodory~\cite{car1} who studied prime ends in
simply-connected domains in the plane. For a comprehensive introduction to Carath\'eodory's prime ends we refer to Chapter 9 of a book by Collingwood--Lohwater~\cite{cl}. Subsequently, the theory has been developing to include more general domains in the plane and in higher dimensional Euclidean spaces, e.g. Freudenthal~\cite{Fre}, Kaufman~\cite{Kau}, Mazurkiewicz~\cite{maz2}, and more recently Epstein~\cite{Ep} and N\"akki~\cite{na}. The latter one defines prime ends based on the notion of modulus, and therefore, suitable to investigate the quasiconformal mappings in $\R^n$ (cf. \cite{aw2} for a related to \cite{na} prime ends theory in the Heisenberg group $\mathbb{H}_1$).

In this paper we study the following types of ends and prime ends in a more general setting of metric spaces
recently proposed in~\cite{abbs, es}. In order to introduce basic definitions, it is enough to assume that $(X, d_X)$ is a complete doubling metric space. The construction of (prime) ends consists of number of auxiliary definitions. First, we define acceptable sets.

Let $\Om\subsetneq X$ be a bounded domain in $X$, i.e. a bounded nonempty connected open subset of $X$ that is not the whole space $X$ itself.

\begin{definition}\label{def-accset}
 We call a bounded connected set $E\subsetneq\Om$ an \emph{acceptable} set if $\overline{E}\cap \partial \Omega\not=\emptyset$.
\end{definition}

By discussion in \cite{abbs}, we know that boundedness and connectedness of an acceptable set $E$ implies that
$\overline{E}$ is compact and connected. Furthermore, $E$ is infinite, as otherwise we would have $\overline{E}=E \subset \Om$. Therefore, $\overline{E}$ is a continuum. Recall that a \emph{continuum} is a connected compact set containing at least two points.

\begin{definition}\label{def-chain}
We call a sequence $\{E_k\}_{k=1}^\infty$ of acceptable sets a \emph{chain} if it satisfies the following conditions:
\begin{enumerate}
\item \label{it-subset}
$E_{k+1}\subset E_k$ for all $k=1,2,\ldots$,
\item \label{pos-dist}
$\distMa(\Omega\cap\bd E_{k+1},\Omega\cap \bd E_k )>0$
for all $k=1,2,\ldots$,
\item \label{impr}
The \emph{impression} $\bigcap_{k=1}^\infty \overline{E}_k \subset \bd\Om$.
\end{enumerate}
\end{definition}

We remark that the impression is either a point or a continuum, since $\{\overline{E}_k\}_{k=1}^\infty$ is a decreasing sequence of continua. Furthermore, Properties \ref{it-subset} and \ref{pos-dist} above imply that $E_{k+1}\subset {\rm Int} E_{k}$. In particular, ${\rm Int} E_{k} \ne \emptyset$.

Our definition of a chain differs from that in Definition 4.2 in \cite{abbs} as now we use the Mazurkiewicz distance instead of the underlying metric. Such a modification is convenient since class $\calF$ of  mappings is defined in terms of the related inner diameter distance, also due to relations \eqref{metric-rel}. Furthermore, in what follows we will often study paths and their behavior under homeomorphisms in class $\calF$, also we will appeal to constructions involving paths and continua containing them. Let us also emphasize that such a modification does not affect results from \cite{abbs}, cf. Definition 2.3 in Estep--Shanmugalingam~\cite{es} and the discussion following it for comparison between the above definition and Definition 4.2 in \cite{abbs}. In general, there are more chains and ends in the sense of the above definition than in \cite{abbs} (see Definition~\ref{def-end} below), and thus a priori a prime end in the setting of \cite{abbs} need not be prime in our sense. We further note, that results in \cite{abbs} employed below,  which use the analog of condition~\eqref{pos-dist} in Definition~\ref{def-chain} for $d$ instead of $\distMa$ are, in fact, based on the positivity of the Mazurkiewicz distance. Nevertheless, upon appealing to results from \cite{abbs}, we comment about consequences of the difference between definitions here and in \cite{abbs}.

\begin{definition}\label{def-end}
(1) We say that a chain $\{E_k\}_{k=1}^\infty$ \emph{divides} the chain
$\{F_k\}_{k=1}^\infty$ if for each $k$ there exists $l_k$
such that $E_{l_k}\subset F_k$ (for all $l\geq l_k$).

(2) Two chains are \emph{equivalent} if they divide each other.

(3) A collection of all mutually equivalent chains is called an \emph{end} and denoted $[E_k]$, where
$\{E_k\}_{k=1}^\infty$ is any of the chains in the equivalence class.

(4) The \emph{impression of} $[E_k]$, denoted $I[E_k]$, is defined as the impression of
any representative chain.

The collection of all ends is called the \emph{end boundary} and is
denoted $\partial_E\Omega$.
\end{definition}

The impression of an end is independent of the choice of representative chain, see \cite[Section 4]{abbs}. Note also that if a chain $\{F_k\}_{k=1}^\infty$ divides $\{E_k\}_{k=1}^\infty$, then it divides every chain equivalent to $\{E_k\}_{k=1}^\infty$. Furthermore, if $\{F_k\}_{k=1}^\infty$ divides $\{E_k\}_{k=1}^\infty$,
then every chain equivalent to $\{F_k\}_{k=1}^\infty$ also divides $\{E_k\}_{k=1}^\infty$.
Therefore, the relation of division extends in a natural way from chains to ends, defining a partial order on ends.

\begin{definition}\label{prime-end}
We say that an end $[E_k]$ is a \emph{prime end} if it is not divisible by any other end. The collection of all prime ends is called the \emph{prime end boundary} and is denoted $\partial_P\Omega$.

Similarly, the collection of all prime ends with singleton impressions is denoted $\bdySP \Omega$.
\end{definition}

For the convenience of readers further results describing accessible points and the structure of prime ends in domains (locally) finitely connected at the boundary are presented in Section~\ref{sect-mainr}, where they are applied.

\section{The class $\calF$ of mappings}\label{sect-map-class}

We say that a metric space $(X, d_X)$ satisfies the set of assumptions (A) if the following hold.
\vspace{0.2cm}

{\bf Assumptions (A)}

\begin{itemize}
\item[(1)] $(X, d_X)$ is a complete doubling metric space.
\item[(2)] $X$ is quasiconvex. 
\end{itemize}

The first assumption on space $X$ implies, among other properties, that $X$ is proper, i.e. closed and bounded sets in $X$ are compact. The second allows us to infer that if $\Om\subset X$ is an open connected set, then $\Om$ is rectifiably connected and, thus path-connected, see Lemmas 4.37 and 4.38 in Bj\"orn--Bj\"orn~\cite{bb}.


The following class of mappings is the central subject of our studies.
\begin{definition}\label{class-map}
 Let $(X, d_X)$ and $(Y, d_Y)$ be metric spaces both satisfying assumptions (A). We say that a homeomorphism $f$ from a bounded domain $\Om\subset X$ onto a bounded domain $\Om'\subset Y$ belongs to class $\calF(\Om, \Om')$ if for any two connected sets $E, F \subsetneq \Om$ it holds that
 \begin{equation}\label{class-map-prop}
  \distM(E, F) =0 \quad \hbox{if and only if} \quad \distM(f(E), f(F))=0. \tag{$\mathcal{F}$}
 \end{equation}
\end{definition}

In the definition above we provide a minimal set of assumptions on the underlying domains. However, in further studies and examples illustrating this class of mappings we need to impose various additional conditions on the geometry of domains. This is the case, for instance, when one wants to check when all quasiconformal mappings between two given domains belong to class $\calF$, see examples below and Remark~\ref{rem-herkos}.

 Similar class of mappings was introduced and studied in the Euclidean setting in the context of extension properties of homeomorphisms by Zori\v c~\cite{zo2} (with the inner distance metric called in \cite{zo2}, the Mazurkiewicz metric). However, in \cite{zo2} one studies only the case of homeomorphisms between a ball and its image under given homeomorphism. Moreover, the results of \cite{zo2} heavily rely on the discussion of similar extension results for quasiconformal mappings between a Euclidean ball and its image.

Class $\calF$ contains several well-known classes of mappings as shown by the following examples. We present them in both Euclidean and metric settings and use various techniques to determine when a homeomorphism belongs to $\calF$.

\begin{ex}\label{ex1}$\phantom{AAAAAAAAAAAAAAAAAAAAAAAAAAAAAAAAAAaAAAAAAAAAAaAAAAAAAAAAAAAAAAAAAA}$
{\bf (a)}\,((Quasi)conformal mappings I) Let $B^n\subset \R^n$ be a ball and $f$ be (quasi)conformal mapping of $B^n$ onto a collared domain $\Om'\subset \R^n$. Then $f\in \calF(B^n, \Om')$.

 Indeed, let $E, F\subset B^n$ be connected such that $\distM(E,F)=0$. Then by the above discussion on metric $\dM$ it holds that also $d(E, F)=0$. Thus, there exists $x\in \overline{E}\cap \overline{F}$ and we need to consider two cases: $x\in {\rm Int}\, B^n$ and $x\in \bd B^n$.

 In the first case $\modn(E, F, B^n)=\infty$, as $\Gamma(E, F, B^n)$ contains a constant curve. Condition \eqref{def-qc-mod} in Definition~\ref{def-qc} implies that $\modn(f(E), f(F), \Om')=\infty$ which together with the injectivity of $f$ in turn again imply that $\Gamma(f(E), f(F), \Om')$ contains a constant curve. Hence, $\distM(f(E), f(F))=0$. The similar reasoning gives us implication in the opposite direction.

 On the other hand, if $x\in \bd B^n$, then since $B^n$ is QC-flat, we have by Definition~\ref{qc-flat} that
 $\modn(E, F, B^n)=\infty$. By quasiconformality of $f$ we have that $\modn(f(E), f(F), \Om')=\infty$. Moreover, since $\Om'$ is collared (see Definition~\ref{def-collar}), then Part (1) of Lemma 2.3 in N\"akki~\cite{na} implies that $d(f(E), f(F))=0$ and hence, $\distM(f(E), f(F))=0$.

  \vspace{0.2cm}

\noindent{\bf (b)}\,(bi-Lipschitz mappings) Let $f$ be a bi-Lipschitz mapping between domains $\Om$ and $\Om'$ in metric spaces such that $\Om$ and $\Om'$ are linearly connected (cf. Definitions~\ref{lin-con} and~\ref{def-bilip}). Then $f\in \calF(\Om, \Om')$.

The proof is analogous to the one in part a). Using the above notation, linear connectedness of $\Om'$ is used in order to infer from $d_Y(f(E), f(F))=0$ that $\distM(f(E), f(F))=0$. Similarly, linear connectedness of $\Om$ is employed to show that $d_X(E, F)=0$ implies that $\distM(E, F)=0$.
\end{ex}

It turns out that the above example can be generalized to include the case of bounded target domain $\Om'\subset \R^n$ finitely connected at the boundary. We present it separately, since the argument differs from the above.

\begin{ex}\label{ex-qc-finit}
Let $\Om=B^n\subset \R^n$ and $\Om'\subset \R^n$ be a bounded domain finitely connected at the boundary. Then, every quasiconformal map $f\in \calF(B, \Om')$.

In order to show this let us assume, that with the notation of Example~\ref{ex1}(a), it holds that $\distM(E,F)=0$. If \[
\overline{E}\cap \overline{F}\cap \Om \not=\emptyset,
\]
then, as above, by injectivity of $f$ we immediately obtain that $\distM(f(E), f(F))=0$. Let us, therefore, assume that  $\overline{E}$ and $\overline{F}$ intersect only at the boundary of $\Om$. Then, as in the previous example by QC-flatness of $B^n$, we obtain that
\begin{equation}\label{eq0-ex-qc-finit}
\modn(f(E), f(F), \Om')=\infty.
\end{equation}
On the contrary, let us suppose that $\distM(f(E), f(F))>0$. By the assumption
$\overline{f(E)}$ intersects with $\overline{f(F)}$ only at $\bd \Om'$. Recall, that $\Gamma(f(E), f(F), \Om')$ denotes a family of all rectifiable curves in $\Om'$ joining $f(E)$ and $f(F)$. Therefore, Theorem 7.1 in \cite{va1} leads to the estimate:
\begin{equation}\label{eq-ex-qc-finit}
 \modn(f(E), f(F), \Om')\leq \frac{\mu(\Om')}{\distM(f(E), f(F))}<\infty.
\end{equation}
 This gives us an immediate contradiction with \eqref{eq0-ex-qc-finit}.



  In order to prove the opposite implication, let us choose $E,F \subset \Om'$ such that
$\distM(E, F)=0$. Moreover, let $x\in \overline{E}\cap \overline{F}\cap \bd \Om'$. Finite connectedness of $\Om'$ at the boundary implies that for any $r>0$ and any neighborhood $U\subset B(x,r)$ of $x$ it holds that
  \[
    \overline{E}\cap U\quad \hbox{and}\quad  \overline{F}\cap U
  \]
 belong to the same connectedness component $U(r)$ of $U$, as otherwise $\distM(E, F)>0$. Consider a minimizing sequence of curves $(\ga_i)$ joining $E$ and $F$ in $U(r)$ such that
 \[
  \inf_{i\in \N} \diam\, \ga_i=0\,\,(=\distM(E, F)).
 \]
 Since $B^n$ is locally connected at the boundary and $f$ is a homeomorphism, it holds that
  \[
  \inf_{i\in \N} \diam\, \fv(\ga_i)=0\,=\distM(\fv(E), \fv(F)).
 \]
and the proof of the Property \eqref{class-map-prop} for $f$ is completed.
\end{ex}

Next example shows that class $\calF$ can be larger than the class of quasiconformal mappings.

\begin{ex}\label{ex-notQC}
 Let $B^n$ be a unit ball in $\R^n$ for $n\geq 3$. Define a domain $\Om$ to be a slit ball:
 \[
  \Om:= B^n\setminus \{(x,0,0)\in \R^n: 0\leq x<1\}.
 \]
  There exists no quasiconformal mapping from $\Om$ onto $B^n$ (see Example 17.23(2) in V\"ais\"al\"a~\cite{va1}). Indeed, if such a map existed, then $\Om$ would be Jordan domain by Theorem 17.22 in \cite{va1}, i.e. $\bd \Om$ would be homeomorphic to a sphere $S^{n-1}\subset \R^n$. On the other hand, a non-quasiconformal homeomorphism between $\Om$ and $B^n$ exists and trivially satisfies condition \eqref{class-map-prop} of Definition~\ref{class-map}.

    Similar examples of non-quasiconformal homeomorphisms in class $\calF(B^3, \Om)$ emerge from cusp-type domains $\Om$ studied e.g. by Gehring--V\"ais\"al\"a~\cite{gev} and Gehring~\cite{ge}.
\end{ex}

Next example generalizes observations from Example~\ref{ex1} to the setting of metric spaces. The example appeals to notions of a uniform domain and the Loewner condition. For their definitions and importance we refer to
e.g. Heinonen~\cite[Chapters 8-9]{hei01}, Heinonen--Koskela~\cite{hk} and V\"ais\"al\"a~\cite{vaisala88}. Let us just mention that among examples of uniform domains there are quasidisks, bounded Lipschitz domains and the von Koch snowflake-type domains.

\begin{ex}(Quasi)conformal mappings II) \label{ex-qc2} Let $\Om$ be a uniform domain in a locally compact $Q$-regular $Q$-Loewner space $X$ and $\Om'\subset Y$ be a domain locally connected at the boundary for spaces $X, Y$ as in Definition~\ref{class-map}. Then, quasiconformal mappings as in Definition~\ref{def-qc} belong to class $\calF(\Om, \Om')$. In order to prove this statement we proceed similarly as in Example~\ref{ex1} and distinguish two cases.

With notation of Example~\ref{ex1} if $x\in {\rm Int}\, \Om$, then as above we appeal to the modulus definition of quasiconformal mappings, their injectivity and relations~\eqref{metric-rel} to obtain that condition \eqref{class-map-prop} of Definition~\ref{class-map} holds.

Let now $x\in \bd \Om$. By Fact 2.12 in Herron~\cite{herCam} we know that $\Om$ is QC-flat at $x$, cf. Definition~\ref{qc-flat}. Hence,
\[
\distM(E, F) =0 \,\,\Longrightarrow\,\, \modq(f(E), f(F), \Om')=\infty.
\]
Suppose that $\distM(f(E), f(F))>0$. Thus, $f(E)\cap f(F)=\emptyset$ and we are left with two cases to consider:
\[
\left(\overline{f(E)}\cap \overline{f(F)}\right)\setminus \left(f(E)\cap f(F)\right)=(\not=)\emptyset.
\]
 In both cases the reasoning is similar and, therefore, we present the argument only for the case when there exists $y\in \overline{f(E)}\cap \overline{f(F)}\cap \partial \Om'$. Then, the definition of local connectedness at the boundary gives us that for any ball $B(y, r)$ we find a neighborhood $U\subset B(y, r)$ of $y$ such that $U\cap \Om'$ is connected and path-connected ($Y$ is quasiconvex by Assumptions (A)). Thus, for every $r>0$ we find a curve $\gamma\subset U \cap \Om' \subset B(y, r)$ such that
 \[
  \gamma\cap f(E)\not=\emptyset\not=\gamma\cap f(F).
 \]
 By letting $r\to 0$ we obtain that $\diam \gamma\to 0$ which contradicts assumption that $\distM(f(E), f(F))>0$.

 In order to prove the opposite implication we use the similar approach and assume that $\distM(f(E), f(F))=0$. For any sequence $(\gamma_n)$ of curves joining $f(E)$ with $f(F)$ we consider the sequence of curves $(\fv(\ga_n))$ joining $E$ and $F$. Furthermore, we recall that a uniform domain $\Om$ is locally connected at the boundary (see e.g. \cite[Proposition 11.2]{abbs}). Then, the above reasoning results in $\distM(E, F)=0$.
 \end{ex}

%
%

Another wide class of mappings belonging to $\calF$ is the class of quasisymmetric mappings, see Definition~\ref{def-qsym} and the discussion before it.

\begin{ex}(Quasisymmetric mappings in metric spaces)\label{ex-qs}
 Let $\Om, \Om' \subset X$ be domains locally connected at the boundary. Let further $f:\Om\to f(\Om):=\Om'$ be quasisymmetric (cf. Definition~\ref{def-qsym}). Then $f\in \calF(\Om, \Om')$.

 Suppose that, under the notation of Definition~\ref{class-map}, $\distM(E,F)=0$ for connected subsets $E, F$ in $\Om$. Similarly, to the discussion in above examples it is enough to consider the case when $\overline{E}\cap \overline{F} \cap \bd \Om\not=\emptyset$. Then, there exists a sequence of curves $(\ga_i)$ joining $E$ and $F$ in $\Om$ such that
 \[
 \lim_{i \to \infty} \diam\,\ga_i = 0.
 \]
 Moreover, since $\Om$ is locally connected at the boundary we find a connected set $U\subset \Om$ contained in any ball centered at some $x\in \bd \Om \cap \left(\overline{E}\cap \overline{F}\right)$ and containing all $\ga_i$ for large enough $i$. Proposition 10.8 in \cite{hei01} gives us the following estimate:
 \[
  \diam\,f(\ga_i)\leq \diam\,f(U) \eta \left(2\frac{\diam\, \ga_i}{\diam\,U}\right)\to 0\quad \hbox{for } i\to\infty,
 \]
 where in the last step we again appeal to connectedness of $\Om$ and infer from \cite[Theorem 11.3]{hei01} (see also Tukia--V\"ais\"al\"a~\cite[Corollary 3.12]{tuv}) that $\eta(0)=0$. Since $\eta$ is continuous, we have $\diam\,f(\ga_i)\to 0$ for $i\to\infty$. In a consequence, $\distM(f(E),f(F))=0$. By Proposition 10.6 in \cite{hei01} we know that $\fv$ is quasisymmetric and, thus a reasoning analogous to the one for $f$ gives us also the opposite implication in condition~\eqref{class-map-prop}.
\end{ex}

\section{Main results}\label{sect-mainr}

The main purpose of this section is to show the three types of extension results for mappings in class $\calF$:
\begin{itemize}
\item An extension of a homeomorphism between domains to a continuous map between closures of domains
 (Theorem~\ref{Thm1-homeo-ext}).
\item An extension of a homeomorphism between domains to a homeomorphism between closures of domains (Theorem~\ref{Thm1-homeo-ext-nec}).
\item An extension of a homeomorphism between domains to a homeomorphism between the topological and the prime end boundaries (Theorem~\ref{Thm1-homeo-pext}).
\end{itemize}

We will need the following definitions and results studied in~\cite{abbs}.

First, we recall a notion of accessible boundary points. Such points appear in studies of boundary extension properties of quasiregular and quasiconformal mappings and there are several variants of the following definition
(requiring rectifiability or injectivity of a curve), see e.g. N\"akki~\cite[Section 7.1]{na}.

\begin{definition}[cf. Definition 7.6 in \cite{abbs}]\label{deff-access-pt}
We say that a point $x\in\partial\Om$ is an \emph{accessible} boundary point
if there is a {\rm(}possibly nonrectifiable\/{\rm)} curve $\gamma:[0,1]\to X$ such that $\gamma(1)=x$ and $\gamma([0,1))\subset\Omega$. We call $\ga$ an \emph{end-cut of $\Om$ from $x$}.
%
\end{definition}

%

The following lemma gives us a method of constructing prime ends at accessible boundary points, see Lemma 7.7 in \cite{abbs}. For the sake of completeness of presentation we state this result specializing it to our case. The lemma is used in the proof of the Koebe theorem as well as in the studies of the Royden boundary, see Section~\ref{sect-appl}.

\begin{lem}[Lemma 7.7 in \cite{abbs}]\label{lem-aux}
Let $\Om\subset X$ be a domain satisfying the bounded turning condition. Let $\ga:[0,1]\to X$ be a curve such that $\ga([0,1))\subset\Om$ and $\ga(1)=x\in\bd\Om$.
Let also  $(r_k)$ be a strictly decreasing sequence converging to zero as $k\to\infty$. Then, there exist a sequence $(t_k)$ of positive numbers smaller than $1$ and a prime end $[E_k]$ such that
\begin{enumerate}
\item $I[E_k]=\{x\}$,
\item $\ga([t_k,1))\subset E_k$ and
\item $E_k$ is a component of $\Om\cap B(x,r_k)$ for all $k=1,2,\ldots$.
\end{enumerate}
\end{lem}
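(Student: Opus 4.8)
The plan is to build the chain $\{E_k\}$ directly from the components of the balls $B(x,r_k)$ that capture the tail of $\ga$, then verify the three defining conditions of a chain, identify the impression as $\{x\}$, and finally establish primality, which is where the real work lies.

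\emph{Construction.} Since $\ga$ is continuous with $\ga(1)=x$, I would first pick $t_k\in(0,1)$, with $t_1\le t_2\le\cdots$, so that $\ga([t_k,1))\subset B(x,r_k)$; as $\ga([0,1))\subset\Om$, this tail lies in the open set $\Om\cap B(x,r_k)$. Because $\Om$ has bounded turning it is locally connected (Remark~\ref{rem-lin-con}(3)), so the components of $\Om\cap B(x,r_k)$ are open, and being connected open subsets of a bounded-turning domain they are arcwise connected. Let $E_k$ be the component containing the connected set $\ga([t_k,1))$; this gives property (2) of the lemma and, by construction, property (3). Discarding finitely many indices I may assume $r_k<\tfrac12\diam\Om$, so that $\Om\not\subset B(x,r_k)$ and $E_k$ is a proper subset of $\Om$; it is bounded and connected, and $x\in\overline{\ga([t_k,1))}\subset\overline{E_k}$ with $x\in\bd\Om$, so $E_k$ is acceptable and $\overline{E_k}$ is a continuum. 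Monotonicity $t_k\le t_{k+1}$ forces $\ga([t_{k+1},1))\subset E_k$, and since $E_{k+1}$ is a connected subset of $\Om\cap B(x,r_{k+1})\subset\Om\cap B(x,r_k)$ meeting $E_k$, it satisfies $E_{k+1}\subset E_k$, which is condition~\eqref{it-subset}.

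\emph{Chain conditions and impression.} The observation driving condition~\eqref{pos-dist} is that $\Om\cap\bd E_k\subset\{z:d_X(z,x)=r_k\}$: if $z\in\Om\cap\bd E_k$ had $d_X(z,x)<r_k$ then $z\in\Om\cap B(x,r_k)$, and since a component is relatively closed in $\Om\cap B(x,r_k)$ we would get $z\in E_k$, which is impossible for a boundary point of the open set $E_k$. Hence any $a\in\Om\cap\bd E_{k+1}$ and $b\in\Om\cap\bd E_k$ satisfy $d_X(a,b)\ge r_k-r_{k+1}$, so by~\eqref{metric-rel} $\distMa(\Om\cap\bd E_{k+1},\Om\cap\bd E_k)\ge r_k-r_{k+1}>0$. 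For the impression, $\overline{E_k}\subset\overline{B(x,r_k)}$ yields $\bigcap_k\overline{E_k}\subset\bigcap_k\overline{B(x,r_k)}=\{x\}$ because $r_k\to0$, while $x\in\overline{E_k}$ for every $k$; thus $I[E_k]=\{x\}\subset\bd\Om$, which is both condition~\eqref{impr} and property (1).

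\emph{Primality (the hard part).} Suppose an end $\{G_k\}$ divides $\{E_k\}$; I must show the two ends coincide, i.e. that $\{E_k\}$ divides $\{G_k\}$. Fix $m$ and set $\eta_m:=\distMa(\Om\cap\bd G_{m+1},\Om\cap\bd G_m)>0$. Division supplies, for each $n$, an index $l_n$ with $G_{l_n}\subset E_n$, and by nesting of the $G$'s I may take $l_n\ge m+2$, so $G_{l_n}\subset\mathrm{Int}\,G_{m+1}$ and $E_n$ contains a point $q\in\mathrm{Int}\,G_{m+1}$. If $E_n\not\subset G_m$, then, $E_n$ being connected and meeting both $\mathrm{Int}\,G_m$ and the complement of $G_m$, it meets $\Om\cap\bd G_m$ at a point $p$; since $p\notin\mathrm{Int}\,G_m\supset G_{m+1}$, the connected set $E_n$ must also meet $\Om\cap\bd G_{m+1}$ at a point $p'$. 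As $E_n\subset B(x,r_n)$ is connected, $d_{M}(p,p')\le\diam E_n\le 2r_n$, whence $\eta_m\le d_{M}(p,p')\le 2r_n$. Choosing $n$ with $2r_n<\eta_m$ is a contradiction, so $E_n\subset G_m$ for all large $n$; hence $\{E_k\}$ divides $\{G_k\}$, the chains are equivalent, and no other end divides $[E_k]$, so $[E_k]$ is prime. The main obstacle is precisely this last step: converting abstract non-divisibility into a quantitative bound, which the estimate $\diam E_n\le 2r_n$ set against the positive Mazurkiewicz gap $\eta_m$ resolves.
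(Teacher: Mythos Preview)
Your proof is correct and follows essentially the same route as the paper, which simply refers to Lemma~7.7 in \cite{abbs} and remarks that the primality step (Proposition~7.1, Lemma~7.3 and Remark~7.4 there) carries over to the Mazurkiewicz distance via the bounded turning condition; you have written out precisely those details, including the key quantitative step bounding $\distMa(\Om\cap\bd G_{m+1},\Om\cap\bd G_m)$ by $\diam E_n\le 2r_n$, which is exactly the content of Remark~\ref{rem-connected-diam}. The only cosmetic point is your ``discarding finitely many indices'' to ensure $E_k\subsetneq\Om$: the lemma asserts the conclusion for all $k$, so strictly speaking one should either note that the resulting end is represented by the tail $\{E_k\}_{k\ge k_0}$ or observe that for small $k$ the set $E_k$ can be taken as any component of $\Om\cap B(x,r_k)$ containing the later ones, but this does not affect the argument.
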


If $[E_k]$ is an end and there exists a curve $\ga$ as in the above lemma, then we say that $x\in \bd \Om$ is \emph{accessible through $[E_k]$}.

\begin{rem} \label{rmk-locconn}
Let $X$ be locally path-connected, i.e. every neighborhood of a point $x \in X$ contains a path-connected neighborhood. The Mazurkiewicz--Moore--Menger theorem asserts that if $X$ is a locally connected proper metric space, then $X$ is locally path-connected, see Kuratowski~\cite[Theorem~1, pg~254]{ku}. In particular, every component of an open set is open and path-connected, see \cite[Theorem~2, pg.~253]{ku}.
\end{rem}

Recall the following important observation, see Remark 7.4 in \cite{abbs}.
\begin{rem} \label{rem-connected-diam}
Let $F\subset\Om$ be a a connected set intersecting both sets $A$ and $\Om\setminus A$. Then $F\cap (\Om\cap\bd A)\not=\emptyset$.

An immediate consequence is that if $E_k$, $E_{k+1}$ and $F$ are connected subsets of $\Om$ satisfying
\[
 E_{k+1}\subset E_k, \quad E_{k+1}\cap F \neq \emptyset \quad \hbox{ and }
F\setminus E_k \neq \emptyset,
\]
then $F$ meets both $\Om\cap\bd E_{k+1}$ and $\Om\cap\bd E_k$, which implies in turn that
$\dist(\Om\cap\bd E_{k+1}, \Om\cap\bd E_{k}) \le \diam F$.
\end{rem}

\begin{proof}[Proof of Lemma~\ref{lem-aux}]
 For the proof we refer to Lemma 7.7 in \cite{abbs} and note that $[E_k]$ is an end regardless
 whether in Definition~\ref{def-chain}(\ref{pos-dist}) we consider $d$ or $\distMa$. However, if in Definition~\ref{def-chain}(\ref{pos-dist}) one assumes condition with $\distMa$, then in order to show that $[E_k]$ is a prime end one needs, additionally, to check that also Proposition 7.1, Lemma 7.3 and Remark 7.4 in \cite{abbs} remain true for the Mazurkiewicz distance instead of $d$. This easily follows from Remark~\ref{rmk-locconn} and the bounded turning condition holding for $\Om$.
\end{proof}

We will also need the following description of prime ends for domains finitely connected at the boundary.

\begin{theorem}[Theorem 10.8 in \cite{abbs}]\label{thm-fin-con-homeo}
Assume that $\Om$ is finitely connected at the boundary. Then all prime ends have singleton impressions, and every $x\in\bd\Om$ is the impression of a prime end and is accessible.
\end{theorem}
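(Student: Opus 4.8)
The plan is to prove the three assertions of Theorem~\ref{thm-fin-con-homeo} in the order: first that every boundary point is accessible, second that every boundary point is the impression of some prime end, and third that every prime end has a singleton impression. The overall strategy is to exploit the local structure granted by finite connectedness at the boundary together with the properness of $X$ (from Assumptions~(A)), which lets us pass to convergent subsequences and extract limiting continua. Fix $x\in\bd\Om$ and a strictly decreasing sequence $r_k\to 0$. For each $k$ the finite connectedness condition yields a bounded open $U_k$ with $x\in U_k\subset B(x,r_k)$ such that $U_k\cap\Om$ has finitely many components; I would then select, using that $x\in\overline{\Om}$, those components whose closures contain $x$ and build a nested family out of them.

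For accessibility, I would first argue that among the finitely many components of $U_k\cap\Om$, at least one, call it $V_k$, satisfies $x\in\overline{V_k}$ for infinitely many $k$, and by a diagonal/nesting argument arrange a decreasing sequence of components shrinking to $x$. Picking points $y_k\in V_k$ with $d(y_k,x)\to 0$ and joining consecutive points by curves inside the (path-connected, by Remark~\ref{rmk-locconn}) components, I would concatenate these into a single curve $\ga:[0,1]\to X$ with $\ga([0,1))\subset\Om$ and $\ga(1)=x$; this is exactly an end-cut, establishing accessibility as in Definition~\ref{deff-access-pt}. Once accessibility is in hand, I would invoke Lemma~\ref{lem-aux} directly: the bounded turning condition is not literally assumed here, but finite connectedness at the boundary supplies enough local connectivity that the component-based chain $E_k$ (each $E_k$ the appropriate component of $\Om\cap B(x,r_k)$ containing the tail of $\ga$) is a genuine chain with $I[E_k]=\{x\}$. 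This simultaneously yields the second assertion, that $x$ is the impression of a prime end, provided one checks the chain is maximal.

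The main obstacle is the third assertion: that \emph{every} prime end, not merely the constructed ones, has a singleton impression. Here I would argue by contradiction. Suppose $[E_k]$ is a prime end whose impression $I=\bigcap_k\overline{E_k}$ is a nondegenerate continuum. Choosing a point $x\in I$, the finite connectedness of $\Om$ at $x$ forces, for each $r$, a decomposition of $\Om\cap B(x,r)$ into finitely many components; I would show that the sets $E_k$ must eventually lie in (or be refined by) a single such component near $x$, because otherwise condition~\eqref{pos-dist} of Definition~\ref{def-chain} (positivity of the Mazurkiewicz separation $\distMa(\Om\cap\bd E_{k+1},\Om\cap\bd E_k)$) together with the finitely many components would be violated. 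This lets me construct a new chain $\{F_k\}$ built from the single-component pieces at $x$, with $I[F_k]=\{x\}\subsetneq I$. The chain $\{F_k\}$ then strictly divides $\{E_k\}$, contradicting that $[E_k]$ is prime by Definition~\ref{prime-end}. The delicate point — and where I expect the real work to concentrate — is verifying that this finer chain genuinely divides the original one, i.e.\ producing the indices $l_k$ with $F_{l_k}\subset E_k$; this requires relating the ball-based components $F_k$ to the given acceptable sets $E_k$ and using that $I\subset\bd\Om$ is compact so that finitely many balls cover it, reducing the nondegenerate-impression case to the local single-point analysis.

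Throughout I would lean on the properness of $X$ to guarantee that nested sequences of closures are decreasing sequences of nonempty compacta with nonempty intersection, and on Remark~\ref{rmk-locconn} to replace connectedness by path-connectedness whenever curves must be concatenated. A subtlety worth flagging explicitly is the discrepancy between the Mazurkiewicz-distance version of Definition~\ref{def-chain}\eqref{pos-dist} used here and the $d$-version in \cite{abbs}: as noted in the proof of Lemma~\ref{lem-aux}, the relevant \cite{abbs} results (Proposition 7.1, Lemma 7.3, Remark 7.4) carry over because the underlying arguments rest on positivity of the Mazurkiewicz distance, so I would cite that passage rather than reprove it.
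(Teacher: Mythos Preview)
The paper does not actually prove Theorem~\ref{thm-fin-con-homeo}; it is merely quoted as Theorem~10.8 of~\cite{abbs}, so there is no ``paper's own proof'' to compare against directly. That said, the paper does record the essential structural input from~\cite{abbs}, namely Lemma~\ref{lem10-5} (Lemmas~10.5 and~10.6 in~\cite{abbs}), and the proof in~\cite{abbs} runs entirely through that lemma.

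Your overall plan is sound and close to the~\cite{abbs} argument, but there is one misstep worth correcting. You propose to obtain the prime end at an accessible point $x$ by invoking Lemma~\ref{lem-aux}, noting that the bounded turning condition ``is not literally assumed.'' That is a genuine gap: Lemma~\ref{lem-aux} as stated in this paper requires bounded turning, and finite connectedness at the boundary does not supply it. The fix is exactly Lemma~\ref{lem10-5}: having constructed the end-cut $\gamma$, take $A_k$ to be the tail components you already built (or simply $A_k=\Om$), and the lemma hands you a prime end $[F_k]$ with $F_k$ a component of $B(x,r_k)\cap\Om$ and $I[F_k]=\{x\}$. No bounded turning is needed. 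Your third step (singleton impressions for all prime ends) is also handled more cleanly by the same lemma: given an arbitrary prime end $[E_k]$ and any $x\in I[E_k]$, apply Lemma~\ref{lem10-5} with $A_k=E_k$; the conclusion $F_k\subset A_k=E_k$ gives the division $[F_k]\,|\,[E_k]$ outright, so primality forces $[E_k]=[F_k]$ and hence $I[E_k]=\{x\}$. This sidesteps the ``delicate point'' you flag about producing indices $l_k$, since the lemma builds the $F_k$ already nested inside the $E_k$.
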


In the special case of a domain $\Om$ locally connected at the boundary, we are able to provide more detailed construction of prime ends.

\begin{cor}[Corollary 10.14 in \cite{abbs}]\label{cor-loc-conn}
If $\Om$ is locally connected at the boundary and $[E_k]$ is a prime end
in $\Om$, then $I[E_k]=\{x\}$ for some $x\in\bd\Om$ and
there exist radii $r_k^x>0$,  such that $B(x, r_k^x)\cap \Om\subset E_k$, $k=1,2,\ldots$.
Furthermore, for each $x\in\bd\Om$, the sets
\[
 G_k=B(x, 1/k)\cap \Om, \quad k=1,2,\ldots,
\]
define the only prime end $[G_k]$ with $x$ in its impression.
\end{cor}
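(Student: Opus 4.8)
The plan is to prove Corollary~\ref{cor-loc-conn} by specializing Theorem~\ref{thm-fin-con-homeo} to the locally connected case and then constructing the canonical chain $G_k = B(x,1/k)\cap\Om$ explicitly. I would split the argument into three parts corresponding to the three claims: first that every prime end has a singleton impression $\{x\}\subset\bd\Om$ with the nesting property $B(x,r_k^x)\cap\Om\subset E_k$; second that $[G_k]$ is itself a prime end; and third that it is the \emph{only} prime end with $x$ in its impression.

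For the first claim, I would start from Theorem~\ref{thm-fin-con-homeo}, which (since local connectedness at the boundary is a special case of finite connectedness, see Definition~\ref{def-fin-con}) already gives that every prime end $[E_k]$ has a singleton impression $I[E_k]=\{x\}$ with $x\in\bd\Om$. The additional content is the existence of radii $r_k^x>0$ with $B(x,r_k^x)\cap\Om\subset E_k$. Here I would use local connectedness at $x$ together with Remark~\ref{rmk-locconn}: for small $r$ the set $U\cap\Om$ is connected and, since $X$ is proper and locally connected (Assumptions (A) via Remark~\ref{rmk-locconn}), the component $B(x,r)\cap\Om$ containing the tail of $\{E_k\}$ is open and path-connected. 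Because $I[E_k]=\{x\}$ and $\{\overline{E}_k\}$ is a decreasing sequence of continua shrinking to $x$, for each fixed $k$ the set $\ovOm\setminus E_k$ stays a positive Mazurkiewicz distance from $x$ in the sense needed; hence some small ball $B(x,r_k^x)\cap\Om$, being the unique boundary-component at scale $r_k^x$, must be contained in $E_k$. This is the step I expect to require the most care, since it amounts to showing the canonical neighborhoods are eventually swallowed by any chain with impression $\{x\}$, and it is essentially where local (as opposed to merely finite) connectedness is indispensable.

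For the second and third claims I would construct $G_k=B(x,1/k)\cap\Om$ and verify directly that it is a chain in the sense of Definition~\ref{def-chain}: nesting $G_{k+1}\subset G_k$ is clear; the impression $\bigcap_k\overline{G}_k=\{x\}\subset\bd\Om$ gives condition~\eqref{impr}; and the positive Mazurkiewicz-distance condition~\eqref{pos-dist} follows from local connectedness exactly as the relevant results from \cite{abbs} (Proposition 7.1, Lemma 7.3, Remark 7.4) were argued to hold for $\distMa$ in the proof of Lemma~\ref{lem-aux}. That $[G_k]$ is \emph{prime} then follows from the first claim: any end dividing $[G_k]$ would be a chain $\{E_k\}$ with $I[E_k]=\{x\}$ and, by the nesting $B(x,r_k^x)\cap\Om\subset E_k$ just established, each $G_j=B(x,1/j)\cap\Om$ is contained in a tail of $\{E_k\}$, so $[G_k]$ divides $[E_k]$ as well, forcing equivalence. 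This same equivalence argument simultaneously delivers uniqueness: any prime end with $x$ in its impression has singleton impression $\{x\}$ by the first claim, hence divides and is divided by $[G_k]$, so it equals $[G_k]$. The main obstacle throughout is the first claim's nesting estimate; once $B(x,r_k^x)\cap\Om\subset E_k$ is in hand, primality, existence, and uniqueness all reduce to the division/equivalence bookkeeping for ends.

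I would close by remarking that since this is stated as a citation of Corollary 10.14 in \cite{abbs}, the cleanest route in the paper is to cite that corollary and note, as in the proof of Lemma~\ref{lem-aux}, that the replacement of $d$ by $\distMa$ in Definition~\ref{def-chain}\eqref{pos-dist} does not affect the conclusion, because the underlying positivity arguments of \cite{abbs} rest on the Mazurkiewicz distance and on local connectedness, both of which are available here.
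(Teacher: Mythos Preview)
The paper does not give its own proof of this corollary: it is stated verbatim as ``Corollary 10.14 in \cite{abbs}'' and used as a black box, with no argument supplied beyond the general remark (made after Definition~\ref{def-chain} and in the proof of Lemma~\ref{lem-aux}) that the results quoted from \cite{abbs} remain valid when $d$ is replaced by $\distMa$ in condition~\eqref{pos-dist}. Your final paragraph already identifies this correctly, so in that sense your proposal matches the paper exactly.

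Your first three paragraphs, by contrast, supply an actual proof sketch that the paper does not contain. The sketch is sound and the logical architecture is right: the nesting $B(x,r_k^x)\cap\Om\subset E_k$ is indeed the crux, and once it is in hand, primality and uniqueness of $[G_k]$ follow by the mutual-division argument you describe. The one place where your write-up is slightly loose is the nesting step itself. A clean way to make it precise is to use Remark~\ref{rem-connected-diam}: for each $k$ choose $r$ so small that $B(x,r)\cap\Om$ is connected (local connectedness) and meets some $E_m$ with $m>k$ (possible since $I[E_k]=\{x\}$). If $B(x,r)\cap\Om\not\subset E_k$, then this connected set meets both $E_{k+1}$ and $\Om\setminus E_k$, hence intersects $\Om\cap\bd E_k$ and $\Om\cap\bd E_{k+1}$, forcing $\distMa(\Om\cap\bd E_k,\Om\cap\bd E_{k+1})\le \diam(B(x,r)\cap\Om)\le 2r$; taking $r$ small enough contradicts Definition~\ref{def-chain}\eqref{pos-dist}. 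Your phrasing in terms of ``positive Mazurkiewicz distance from $x$'' gestures at this but does not quite pin it down.
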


In what follows, we will often appeal to chains/prime ends $[G_k]$ as \emph{canonical chains/prime ends} and denote them $[G_k^x]$ or, for short, $[G_k]$ if a boundary point associated with $[G_k]$ will be clear from the context of discussion.

Denote by $\bdySP \Om$ the set of all singleton prime ends.

\begin{theorem}[Theorem 10.10 in \cite{abbs}]\label{thm-clOmm-cpt-new}
 A domain $\Om$ is finitely connected at the boundary if and only if $\clOmP=\Om \cup \bdySP \Om$ is compact and all prime ends have singleton impressions.
\end{theorem}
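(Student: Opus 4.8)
The plan is to prove the two implications separately, letting Theorem~\ref{thm-fin-con-homeo} dispose of the ``singleton impressions'' assertion in the forward direction so that the genuine work is concentrated on the compactness of $\clOmP$.

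\emph{Forward direction.} Assume $\Om$ is finitely connected at the boundary. Theorem~\ref{thm-fin-con-homeo} gives at once that every prime end has a singleton impression, hence $\bdyP\Om=\bdySP\Om$ and $\clOmP=\Om\cup\bdySP\Om$ is the full prime end closure. It remains to establish compactness, which I would do by extracting a convergent subsequence from an arbitrary sequence $(p_i)\subset\clOmP$ (the prime end topology being first countable under Assumptions (A), so that sequential compactness suffices). Consider the impression projection $\pi\colon\clOmP\to\ovOm$ defined by $\pi|_\Om=\mathrm{id}$ and $\pi([E_k])=I[E_k]$, the latter a single point; this map is continuous for the prime end topology. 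Since $X$ is proper and $\Om$ bounded, $\ovOm$ is compact, so after passing to a subsequence $\pi(p_i)\to x\in\ovOm$. If $x\in\Om$, then the impressions lie in $\bd\Om$ force $p_i\in\Om$ eventually, and $p_i\to x$ in $\Om$. If $x\in\bd\Om$, I would invoke finite connectedness at $x$: for each scale $1/k$ pick a neighborhood $U_k\subset B(x,1/k)$ with $U_k\cap\Om$ having only finitely many components, and by a pigeonhole-plus-diagonal extraction arrange that the representatives of the subsequence eventually all enter one nested sequence of components $G_k\supset G_{k+1}$ with $\bigcap_k\overline{G_k}=\{x\}$. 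Following the chain construction underlying Theorem~\ref{thm-fin-con-homeo} (selecting sparse scales so that Definition~\ref{def-chain}\eqref{pos-dist} holds), $\{G_k\}$ represents a singleton prime end to which the extracted subsequence converges.

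\emph{Reverse direction.} I would argue by contraposition: suppose $\Om$ is \emph{not} finitely connected at the boundary, so there exist $x\in\bd\Om$ and $r_0>0$ such that every neighborhood $U\subset B(x,r_0)$ of $x$ meets $\Om$ in infinitely many components. Since this holds at every scale below $r_0$, infinitely many distinct components accumulate at $x$, and from them one manufactures a sequence of prime ends $(p_j)$ whose impressions converge to $x$ but whose representing chains lie in pairwise separated components at every scale. Any convergent subsequence $p_{j_i}\to p$ would force the impression of $p$ to equal $\{x\}$ by continuity of $\pi$; yet convergence in the prime end topology requires the approximating sets of $p_{j_i}$ to lie eventually inside every $E_k$ of a chain representing $p$, which is impossible once the $p_{j_i}$ occupy distinct components at the scale of $E_k$. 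Thus $(p_j)$ admits no convergent subsequence, contradicting compactness of $\clOmP$. (Where the organization into genuine prime ends is delicate, the same component structure can instead be bundled into a single chain whose impression is a nondegenerate continuum, contradicting the hypothesis that all prime ends have singleton impressions; I would lead with whichever realization is cleaner.)

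\emph{Main obstacle.} The hard part is the topological bookkeeping around the prime end topology and the division relation of Definition~\ref{def-end}. In the forward direction this means verifying that the diagonally extracted subsequence truly converges to the constructed canonical prime end, and that the finitely many components at each scale leave only finitely many prime ends over $x$ among which to distribute the tail; in the reverse direction it means certifying that prime ends built from mutually separated components at every scale are non-equivalent and share no common limit. Both reductions rest on translating ``finitely many components in small neighborhoods'' into statements about acceptable sets and the positivity condition Definition~\ref{def-chain}\eqref{pos-dist}, using the local-path-connectedness consequences recorded in Remark~\ref{rmk-locconn} together with the structural description in Theorem~\ref{thm-fin-con-homeo}.
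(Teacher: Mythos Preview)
The paper does not give its own proof of this statement: Theorem~\ref{thm-clOmm-cpt-new} is simply quoted from \cite{abbs} (Theorem~10.10 there) with no argument supplied, so there is nothing in the present paper to compare your proposal against.

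As to the proposal itself, the forward direction is a sensible plan and is essentially how the original proof in \cite{abbs} proceeds (project to $\ovOm$, use properness to get a subsequential limit, and at a boundary limit point use finite connectedness and pigeonhole to funnel the tail into a nested sequence of components that forms a chain). Two points deserve care: you assert first countability of the prime end topology without justification, and in the case $x\in\Om$ your sentence ``the impressions lie in $\bd\Om$ force $p_i\in\Om$ eventually'' needs to be stated as: impressions of prime ends lie in the closed set $\bd\Om$, so if $\pi(p_i)\to x\in\Om$ then only finitely many $p_i$ can be prime ends.

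Your reverse direction, however, has a genuine gap. From the failure of finite connectedness at $x$ you get, for each neighborhood $U\subset B(x,r_0)$, infinitely many components of $U\cap\Om$; but you then claim you can ``manufacture a sequence of prime ends $(p_j)$'' sitting in pairwise separated components at every scale. It is not automatic that each such component supports a prime end at all (Theorem~\ref{thm-fin-con-homeo} is only available under the hypothesis you have just negated), nor that components at one scale refine into components at a smaller scale in the nested way you need. The actual argument in \cite{abbs} is more delicate here: one must show that infinitely many of the components at a fixed scale have $x$ in their closures, build from them either a sequence in $\clOmP$ with no convergent subsequence or a single chain with nondegenerate impression, and verify the positivity condition of Definition~\ref{def-chain}\eqref{pos-dist} along the way. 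Your parenthetical fallback (``bundle into a single chain whose impression is a nondegenerate continuum'') is closer to what is needed, but as written it is an assertion rather than a construction.
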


The following auxiliary result shows that given a domain $\Om$ and a sequence $\{E_k\}_{k=1}^{\infty}$ of subdomains in $\Om$ satisfying first two conditions of Definition~\ref{def-chain} and a sequence of points in $\Om$ converging to a point in the impression of $\{E_k\}_{k=1}^{\infty}$, we can infer that every $E_k$ contains almost every element of the sequence of points
(cf. Definition 8.1 in \cite{abbs} for the notion of convergence of a sequence of points to an end).

\begin{lem}\label{lem-conv}
 Let $\Om\subset X$ be a domain satisfying the bounded turning condition. Suppose that $\{E_k\}_{k=1}^{\infty}$ is a sequence of bounded subdomains of $\Om$ such that the following conditions hold:
\begin{enumerate}
\item \label{it-subset-lem}
$E_{k+1}\subset E_k$ for all $k=1,2,\ldots$,
\item \label{pos-dist-lem}
$\distMa(\Omega\cap\bd E_{k+1},\Omega\cap \bd E_k )>0$ for all $k=1,2,\ldots$.
\end{enumerate}
Let $(x_n)$ be a sequence of points in $\Om$ such that $x_n\to x\in I[E_k]$ in $d_X$. Then, for all $k$ there exists $n_k$ such that $x_n\in E_k$ for all $n>n_k$. The same assertion holds if we assume $\dist$ or $\distM$ instead of $\distMa$.
\end{lem}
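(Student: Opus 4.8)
The plan is to fix an index $k$ and prove that $x_n\in E_k$ for all sufficiently large $n$; since $k$ is arbitrary, this gives the assertion. The guiding idea is that the positive gap $\delta_k:=\distMa(\Om\cap\bd E_{k+1},\Om\cap\bd E_k)>0$ together with the bounded turning condition forbids a point of $\Om$ lying very close to $x$ from being separated from $E_{k+1}$ by a connected set of diameter less than $\delta_k$; consequently such a point must already lie in $E_k$. So the whole argument is a calibration of ``very close'' against $\delta_k$.

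Concretely, since $x\in I[E_k]=\bigcap_j\overline{E}_j\subset\overline{E}_{k+1}$, I would first choose a comparison point $y\in E_{k+1}$ with $d_X(x,y)<\delta_k/(2L)$, where $L$ is the bounded turning constant of $\Om$ (enlarged slightly so that the joining continuum may be taken to be an arc, cf.\ Remark~\ref{rem-lin-con}). As $x_n\to x$ in $d_X$, there is $n_k$ with $d_X(x_n,x)<\delta_k/(2L)$ for all $n>n_k$, whence $d_X(x_n,y)<\delta_k/L$. Suppose, for contradiction, that $x_n\notin E_k$ for some such $n$. Using bounded turning, I would join $x_n$ and $y$ by a continuum $K_n\subset\Om$ with $\diam K_n\le L\,d_X(x_n,y)<\delta_k$.

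Now $K_n$ is connected, meets $E_{k+1}$ at $y$, and meets $\Om\setminus E_k$ at $x_n$ (note $x_n\notin E_k\supset E_{k+1}$). Applying Remark~\ref{rem-connected-diam} with $F=K_n$, the continuum $K_n$ intersects both $\Om\cap\bd E_{k+1}$ and $\Om\cap\bd E_k$; pick crossing points $p\in K_n\cap\Om\cap\bd E_{k+1}$ and $q\in K_n\cap\Om\cap\bd E_k$. Since $K_n$ is a connected subset of $\Om$ containing $p$ and $q$, the definition of the Mazurkiewicz metric gives $\dMa(p,q)\le\diam K_n$, and taking the infimum over crossing points yields $\delta_k=\distMa(\Om\cap\bd E_{k+1},\Om\cap\bd E_k)\le\diam K_n<\delta_k$, a contradiction. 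Hence $x_n\in E_k$ for all $n>n_k$, which proves the statement for $\distMa$.

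For the variants the situation is uneven. The $\dist$ case is immediate — it is literally the inequality $\dist(\Om\cap\bd E_{k+1},\Om\cap\bd E_k)\le\diam K_n$ of Remark~\ref{rem-connected-diam} — and it also follows formally from the $\distMa$ case, because $\dist\le\distMa$ (cf.\ \eqref{metric-rel}), so positivity of the $\dist$-gap forces positivity of the $\distMa$-gap. The inner diameter case is the only place where real care is needed: to bound $\distM(\Om\cap\bd E_{k+1},\Om\cap\bd E_k)$ by $\diam K_n$ I need $\dM(p,q)\le\diam K_n$, which requires $p$ and $q$ to be joined inside $\Om$ not merely by a connected set but by a \emph{locally rectifiable} curve of comparable diameter. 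The hard part will be to produce such a connection: I would take $K_n$ to be a rectifiable arc, available from the bounded turning condition combined with the quasiconvexity of $X$ in Assumptions (A), so that the sub-arc of $K_n$ between $p$ and $q$ is a locally rectifiable curve in $\Om$ of diameter at most $\diam K_n$; the contradiction then goes through verbatim. Verifying that the joining arc can indeed be chosen rectifiable with controlled diameter is the main point to be checked, as Remark~\ref{rem-lin-con} guarantees arcs but not, a priori, rectifiability.
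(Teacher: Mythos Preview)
Your argument is correct for the $\distMa$ and $\dist$ cases, and you correctly isolate the one genuine subtlety in the $\distM$ variant. The core mechanism---use bounded turning to produce a small-diameter connected set in $\Om$ crossing from outside $E_k$ into $E_{k+1}$, then invoke Remark~\ref{rem-connected-diam} to contradict the positive gap---is exactly the paper's final displayed inequality $L\,d(x_n',x_{n_l}')\ge\diam\gamma\ge\distMa(\Om\cap\bd E_{k_0},\Om\cap\bd E_{k_0+2})$.

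The execution, however, differs. The paper first argues that $[E_k]$ is equivalent to the canonical singleton prime end of shrinking balls at $x$ (via local connectedness at the boundary and Theorem~\ref{thm-clOmm-cpt-new}), and uses this to place a subsequence inside $E_{k_0+2}$; only then does it join such a point to one of the points avoiding $E_{k_0}$. You bypass this structural detour entirely: you fix a single reference point $y\in E_{k+1}$ close to $x$, using only that $x\in\overline{E_{k+1}}$, and join each late $x_n$ directly to $y$. This is more elementary, and it does not require condition~\eqref{impr} of Definition~\ref{def-chain}---which the lemma explicitly does \emph{not} assume, while the paper's appeal to prime-end equivalence tacitly does. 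What your route costs you is the global information (equivalence with shrinking balls) that the paper's detour would supply; but for the conclusion actually stated, that information is unnecessary, and your direct argument is the cleaner one. Your caution about producing a \emph{rectifiable} joining arc for the $\distM$ case is warranted; the paper's own proof treats only $\distMa$ explicitly and does not address this point either.
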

\begin{proof}
 Suppose that the assertion of the lemma is false. Then, we may find a subsequence of $(x_n)$, denoted $(x_n')\subset \Om$, and $k_{0}\in \N$ such that $x_n'\to x\in I[E_k]$ in $d_X$ and $x_n'\not\in E_{k_0}$ for all $n>n_{k_{0}}$. On the other hand the convergence of $(x_n')$ to $x$ implies that there exists a subsequence $(x_{n_l}')$ such that $x_{n_l}'\to x$ in metric $d_X$ for $l\to\infty$ with a property that $x_{n_l}'\in E_{k_0+2}$ for sufficiently large $l$. Indeed, the bounded turning condition implies that $\Om$ is locally connected at the boundary, see Remark~\ref{rem-lin-con}(3). Hence, Theorem~\ref{thm-clOmm-cpt-new} gives us that all prime ends in $\bdyP \Om$ are singletons. Therefore, $[E_k]$ is equivalent to a singleton prime end given by a chain of shrinking balls centered at $I[E_k]=\{x\}$ intersected with $\Om$.

 Let now $\gamma$ be any continuous path in $\Om$ joining two given $x_{n_l}'$ and $x_n'$. Remark~\ref{rem-connected-diam} and path-connectedness of $\Om$ result in the following inequalities:
 \[
  L d(x_n', x_{n_l}')\geq \diam (\gamma)\geq \distMa(\Omega\cap\bd E_{k_0},\Omega\cap \bd E_{k_0+2}):=c>0,
 \]
 where $L$ is a bounded turning constant of $\Om$. The above estimate holds for all sufficiently large $n$ and $l$ leading us to a contradiction with the convergence of $(x_n')$.
\end{proof}


We are in a position to present the key technical lemma of the paper. In the statement of the result we abuse notation and write $f[E_k]$ to denote an end determined by an image under a map $f$ of any chain $\{E_k\}_{k=1}^{\infty}$ belonging to an end $[E_k]$. However, Part 3 of Lemma~\ref{lem-maps} explains that such a notation is justified.

\begin{lem}\label{lem-maps}
  Let $\Om\subset X$ be a domain locally connected at the boundary and $D\subset Y$ be a domain. Let further $f\in\calF(\Om,D)$. Then the following properties hold:
  \begin{itemize}
  \item[(1)] If $\{E_k\}_{k=1}^{\infty}$ is a chain in $\Om$, then $\{f(E_k)\}_{k=1}^{\infty}$ is a chain in $D$.
  \item[(2)] If $\{F_k\}_{k=1}^{\infty}$ is a chain in $D$, then $\{f^{-1}(F_k)\}_{k=1}^{\infty}$ is a chain in $\Om$. Moreover, if $D$ is additionally finitely connected at the boundary and $[F_k]$ is a (singleton) prime end in $D$, then $[f^{-1}(F_k)]$ is a singleton prime end in $\Om$ as well.

  \item[(3)] If $\{E_k\}_{k=1}^{\infty}$ and $\{F_k\}_{k=1}^{\infty}$ are equivalent chains in $\Om$, then $\{f(E_k)\}_{k=1}^{\infty}$ and $\{f(F_k)\}_{k=1}^{\infty}$ are equivalent chains in $D$, i.e.
      \[
      f[E_k]=[f(E_k)].
      \]
      In particular, if $[E_k]$ is a prime end, then $[f(E_k)]$ is a singleton prime end.
  \end{itemize}
\end{lem}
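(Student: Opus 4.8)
The plan is to prove the three parts in order, since (1) and (2) supply the raw material and (3) is essentially a corollary of the tools developed for them. Throughout I would keep in mind the defining property \eqref{class-map-prop} of class $\calF$, which translates the combinatorial conditions on chains (Definition~\ref{def-chain}) into statements that $f$ must respect. The key translation is the following: by the relations \eqref{metric-rel} and the definition of class $\calF$, for connected sets $E,F\subsetneq\Om$ one has $\distM(E,F)=0$ iff $\distM(f(E),f(F))=0$, and the vanishing of $\distM$ is exactly what detects whether two pieces of the boundary of a chain-set "touch." So the whole argument hinges on converting condition~\eqref{pos-dist} of Definition~\ref{def-chain} into a positivity statement about Mazurkiewicz distances that $f$ preserves.

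For Part (1), I would verify the three conditions of Definition~\ref{def-chain} for $\{f(E_k)\}$. The nesting $f(E_{k+1})\subset f(E_k)$ is immediate from $E_{k+1}\subset E_k$ and injectivity. For acceptability, I would use that $f$ is a homeomorphism of $\Om$ onto $D$ together with the fact (noted after Definition~\ref{def-accset}) that an acceptable set has compact closure meeting $\bd\Om$; the main point is to show $\overline{f(E_k)}\cap\bd D\neq\emptyset$, which follows because $\overline{E_k}$ reaches $\bd\Om$ and $f$ cannot send a boundary-approaching set to an interior-compactly-contained set without violating the homeomorphism property together with condition~\eqref{class-map-prop}. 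The heart of Part (1) is condition~\eqref{pos-dist}: I must show $\distMa(D\cap\bd f(E_{k+1}),\,D\cap\bd f(E_k))>0$. Here I would argue by contradiction: if this Mazurkiewicz distance were zero, then by \eqref{metric-rel} the corresponding $\distM$-distance between suitable connected pieces would vanish, and applying property~\eqref{class-map-prop} to $f^{-1}$ (equivalently, the "if" direction of \eqref{class-map-prop}) would force $\distMa(\Om\cap\bd E_{k+1},\Om\cap\bd E_k)=0$, contradicting that $\{E_k\}$ is a chain. Finally, the impression $\bigcap\overline{f(E_k)}\subset\bd D$ follows from properness (Assumptions (A) make $Y$ proper), the nesting of continua, and the boundary-reaching property, since any interior limit point would again contradict that $f^{-1}$ is continuous on $D$.

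Part (2) is symmetric: since $f^{-1}$ is also a homeomorphism between domains satisfying Assumptions (A) and condition~\eqref{class-map-prop} is symmetric in $f$ and $f^{-1}$, the same argument shows $\{f^{-1}(F_k)\}$ is a chain in $\Om$. For the added claim about singleton prime ends, I would invoke Theorem~\ref{thm-fin-con-homeo}: since $D$ is finitely connected at the boundary, every prime end in $D$ automatically has singleton impression, and since $\Om$ is locally connected at the boundary, Corollary~\ref{cor-loc-conn} guarantees that prime ends in $\Om$ have singleton impressions as well; the impression of $[f^{-1}(F_k)]$ is then forced to be a single boundary point by the impression computation together with continuity of $f^{-1}$ up to where it is defined. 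For Part (3), given equivalent chains $\{E_k\}$ and $\{F_k\}$ in $\Om$, I would show $\{f(E_k)\}$ and $\{f(F_k)\}$ divide each other by transporting the divisibility inclusions $E_{l_k}\subset F_k$ directly through $f$: applying $f$ gives $f(E_{l_k})\subset f(F_k)$, which is exactly what Definition~\ref{def-end}(1) demands, and the reverse direction is identical. This yields $f[E_k]=[f(E_k)]$ and justifies the notation. The final "in particular" statement—that a prime end maps to a singleton prime end—would combine Part (1) (the image is a chain/end), the preservation of divisibility just established (so maximality with respect to division is preserved, hence the image is prime), and Corollary~\ref{cor-loc-conn} together with the image living in a target whose structure forces singleton impressions.

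The main obstacle I anticipate is Part (1)'s condition~\eqref{pos-dist}: the delicate point is that $\bd f(E_k)$ (the topological frontier of the image) need not literally equal $f(\bd E_k)$, so I cannot naively push the boundary sets through $f$. The correct move is to work with $\Om\cap\bd E_k$, which for chain-sets consists of points interior to $\Om$ where $f$ is a genuine homeomorphism, and to use that $f$ maps $\Om\cap\bd E_k$ homeomorphically onto $D\cap\bd f(E_k)$ since $f$ is an open map on $\Om$; establishing this identification carefully, and then feeding the resulting connected separating sets into property~\eqref{class-map-prop} via \eqref{metric-rel}, is where the real care is needed.
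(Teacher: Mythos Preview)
Your plan for Parts (1), (2), and the equivalence portion of Part (3) matches the paper's approach. One refinement worth making explicit for condition~\eqref{pos-dist} in Part (1): the paper does not try to isolate ``connected pieces'' of the boundary sets $D\cap\bd f(E_k)$ themselves. Instead it takes a minimizing sequence of \emph{curves} $\gamma_k\subset D$ with $\diam\gamma_k\to 0$ joining $D\cap\bd f(E_m)$ to $D\cap\bd f(E_{m+1})$ (quasiconvexity of $Y$ from Assumptions (A) supplies these), and feeds the preimage curves $f^{-1}(\gamma_k)$, which are connected sets joining $\Om\cap\bd E_m$ to $\Om\cap\bd E_{m+1}$, into property~\eqref{class-map-prop}. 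Your phrase ``suitable connected pieces'' should be made concrete in exactly this way; the boundary sets $\Om\cap\bd E_k$ need not be connected, so you cannot apply~\eqref{class-map-prop} to them directly.

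There is, however, a genuine gap in your plan for the final clause of Part (3). You propose to obtain the singleton impression of $[f(E_k)]$ from ``the image living in a target whose structure forces singleton impressions.'' But in Part (3) \emph{no} structural hypothesis is placed on $D$: it is an arbitrary domain in $Y$, not assumed finitely or locally connected at the boundary, so neither Theorem~\ref{thm-fin-con-homeo} nor Corollary~\ref{cor-loc-conn} says anything about prime ends in $D$, and primeness of $[f(E_k)]$ (which your divisibility argument does correctly establish) by itself does not force a singleton impression. The paper instead pulls the singleton conclusion back from $\Om$ via property~\eqref{class-map-prop}: assuming $I[f(E_k)]\supset\{y,z\}$ with $y\neq z$, one chooses sequences $y_{k_n}\to y$, $z_{l_n}\to z$ inside the sets $f(E_m)$, joins consecutive terms by curves to form connected sets $|\gamma_n|$, $|\gamma_n'|$ with $\distM(|\gamma_n|,|\gamma_n'|)>0$ for all large $n$; the preimages $f^{-1}(\gamma_n),\,f^{-1}(\gamma_n')$ lie in $E_m$, and since $\Om$ \emph{is} locally connected at the boundary, $I[E_k]$ is a singleton and $\diam E_m\to 0$, forcing $\distM(f^{-1}(\gamma_n),f^{-1}(\gamma_n'))\to 0$. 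This contradicts~\eqref{class-map-prop}. Your outline needs this step; the target domain contributes nothing here.
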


\begin{proof}
 \underline{Part (1)} Let $\{E_k\}_{k=1}^{\infty}$ be a chain in $\Om$. By Topology it holds that for all $k=1,2,\ldots$ a homeomorphic image $f(E_k)$ of an acceptable set $E_k$ is acceptable (cf. discussion in Part (2) below). We show that $\{f(E_k)\}_{k=1}^{\infty}$ satisfies conditions \eqref{it-subset}-\eqref{impr} of Definition~\ref{def-chain}. In order to show that Part 1 of Definition~\ref{def-chain} holds we note that if $E_{k+1}\subset E_k$, then $f(E_{k+1})\subset f(E_k)$. Indeed, otherwise let $y\in f(E_{k+1})\setminus f(E_k)$ and $x=f^{-1}(y)\in E_k$, since both $f$ and $\fv$ are onto and injective. Then $f(x)=y\in f(E_k)$, contradicting assumption that $f(E_{k+1})\setminus f(E_k)\neq \emptyset$.

 Next we show that $\{f(E_k)\}_{k=1}^{\infty}$ satisfies Part 3 of Definition~\ref{def-chain}. Suppose on the contrary that
 \[
  \bigcap_{k=1}^\infty \overline {f({E}_k)}\nsubseteq \bd D,
 \]
   i.e. there exists $y\in D\cap \bigcap_{k=1}^\infty \overline {f({E}_k)}$. Since, in particular, $y\in D$ then $x:=\fv(y)\in \Om$ as homeomorphisms maps interiors of domains onto interiors. Thus $\dist(x, \bd \Om)>0$. Moreover, there exists $k_0$ such that $x\not \in E_{k_0}$. Otherwise, $x\in E_k$ for all $k=1,\ldots$ and since $\{E_k\}_{k=1}^{\infty}$ is a chain, then by Property \ref{impr} of chains
   \[
   x\in \bigcap_{k=1}^\infty \overline {{E}_k}\subset \bd \Om.
   \]
    Hence, $f(x)=y\not \in f(E_{k_0+1})$ by the previous part of the proof. It follows that $y \not \in f(E_{k})$ for $k\geq k_0+1$ and, hence, also
    \[
     y\not \in \bigcap_{k=1}^\infty f(E_{k}).
    \]
   This leads us to the contradiction.

 In order to show that Property \ref{pos-dist} of Definition~\ref{def-chain} holds for sets $f(E_k)$ we again proceed with the proof by contradiction. Suppose that for some $m$ it holds that
 \begin{equation}\label{eq-main-lem}
 \distMa(D\cap\bd f(E_{m+1}),D\cap \bd f(E_m) )=0.
 \end{equation}
 Then, we find a sequence of curves $(\gamma_k)\subset D$
 \[
  \gamma_k:[0, 1]\to D,\quad \gamma_k(0)\in D\cap \bd f(E_m) \quad\hbox{and}\quad \gamma_k(1)\in D\cap \bd f(E_{m+1}),
 \]
 such that
 \[
  \lim_{k\to\infty}\diam (\gamma_k)=0.
 \]
  The existence of such a sequence of curves follows from the fact since by Assumptions (A) space $Y$ is quasiconvex (cf. Definition~\ref{class-map}), then the minimizing sequence of connected sets, denoted $\{C_k\}_{k=1}^{\infty}$, giving equality in \eqref{eq-main-lem} contains a sequence of curves $\{\ga_k\}_{k=1}^{\infty}$, i.e. $\ga_k\subseteq C_k$ for every $k$.

 Moreover, it holds that $\{\fv(\gamma_k)\}_{k=1}^{\infty}$ is a sequence of curves joining $\Om\cap \bd E_m$ with $\Om \cap \bd E_{m+1}$, and the argument for this observation is similar as in the first part of the proof. By Property \ref{pos-dist} of chain $\{E_k\}_{k=1}^{\infty}$ we have that $\distMa(\Om\cap \bd E_m, \Om \cap \bd E_{m+1})>0$. Since $\Om\cap \bd E_m$ and $\Om\cap \bd E_{m+1}$ are bounded and closed in $\overline{\Om}$ we can choose two subsequences:
  \[
  (\fv(\gamma_{k_l}(0)))\subset \Om\cap \bd E_m\quad\hbox{ and }\quad(\fv(\gamma_{k_l}(1)))\subset \Om\cap \bd E_{m+1}
  \]
  converging to distinct points $x\in \bd \Om$ and $x'\in \bd \Om$, respectively. However, this contradicts Property \eqref{class-map-prop} of mappings in $\calF(\Om, D)$, since then
 \[
  \distM(\fv(\gamma_{k_l}(0)), \fv(\gamma_{k_l}(1)) )>0 \quad \hbox{ for all } l=1,\ldots,
 \]
  which, by Property \eqref{class-map-prop} then implies
 \[
  \distM(f(\fv(\gamma_{k_l}(0))), f(\fv(\gamma_{k_l}(1))) )=\distM(\gamma_{k_l}(0), \gamma_{k_l}(1) )>0 \quad \hbox{ for all } l=1,\ldots.
 \]
 This observation contradicts our assumption that $\diam\,\gamma_{k_l}\to 0$ as $l\to\infty$.

 \underline{Part (2)} Let $\{F_k\}_{k=1}^{\infty}$ be a chain in $D$. Define the following sequence of subsets in $\Om$:
 \[
  E_k:=\fv(F_k),\quad \hbox{ for } k=1, 2, \ldots.
 \]
 It is a direct consequence of $f$ being a homeomorphism, that $\{E_k\}_{n=1}^{\infty}$ are acceptable sets in $\Om$. Indeed, by the continuity of $f$ sets $E_k$ are connected and bounded subset of $\Om$. Suppose now, that
 \[
  \overline{E_k}\cap \bd \Om=\emptyset\quad \hbox{for some }k.
 \]
   Then, we can separate $\overline{E_k}$ and $\bd \Om$ by a compact set $K$. (Note that $\bd \Om$ is compact since it is closed and $X$ is a complete proper space, thus $\bd \Om$ is totally bounded.) Then
   \[
   f(\overline{E_k})\subset f(K)\quad\hbox{and}\quad f(K) \cap \bd D=\emptyset.
   \]
    However, $f(\overline{E_k})=\overline{f(E_k)}=\overline{F_k}$ while by the definition of acceptable sets $\overline{F_k}\cap \bd D\not=\emptyset$ which leads to a contradiction.

 Properties \ref{it-subset} and \ref{impr} of Definition~\ref{def-chain} for a chain $\{E_k\}_{n=1}^{\infty}$ are proven in the same way as the analogous properties showed in the proof of Part 1. It remains to show Property \ref{pos-dist} of chains. Suppose that for some $m$
 \[
  \distMa(\Om\cap \bd E_m, \Om \cap \bd E_{m+1})=0.
 \]
  Property~\eqref{class-map-prop} of mappings in $\calF(\Om, D)$, the definition of chain $\{E_k\}_{n=1}^{\infty}$ together with inequalities~\eqref{metric-rel} and the argument similar to the proof of Property 2 in Part (1) above imply that
 \[
  \distMa(D\cap \bd F_m, D \cap \bd F_{m+1})=\distMa(D\cap \bd f(E_m), D \cap \bd f(E_{m+1}))=0.
 \]

 This contradicts the fact that $\{F_k\}_{k=1}^{\infty}$ is a chain.

 Let us now additionally assume that $D$ is finitely connected at the boundary. Since $\Om$ is locally connected at the boundary, then Theorem~\ref{thm-fin-con-homeo} and Corollary~\ref{cor-loc-conn} imply that a chain
 $\{\fv(F_n)\}_{n=1}^{\infty}$ is divisible by a canonical chain $\{G_k\}_{k=1}^{\infty}$ and, thus
 \[
 I[\fv(F_n)]=\{x\}\subset \bd \Om
 \]
 for some $x\in \bd \Om$.

 \underline{Part (3)} Let chains $\{E_k\}_{k=1}^{\infty}$ and $\{F_l\}_{l=1}^{\infty}$ be equivalent. By the definition this means that these two chains divide each other. Since $\{E_k\}_{k=1}^{\infty}$ divides $\{F_l\}_{l=1}^{\infty}$, then for every $l$ there is $k_0$ such that $E_k\subset F_l$ for all $k\geq k_0$. This immediately implies that $f(E_k)\subset f(F_l)$ for all $k\geq k_0$, as $f$ is a homeomorphism, and so chain $\{f(E_k)\}_{k=1}^{\infty}$ divides $\{f(F_l)\}_{l=1}^{\infty}$. Similarly we prove the opposite property, namely that the division of $\{F_l\}_{l=1}^{\infty}$ by $\{E_k\}_{k=1}^{\infty}$ implies division of $\{f(F_l)\}_{l=1}^{\infty}$ by $\{f(E_k)\}_{k=1}^{\infty}$.

 This observation together with Parts 1 and 2 of the lemma justify the notation $f[E_k]:=[f(E_k)]$ for an end (an equivalence class) represented by a chain $\{f(E_k)\}_{k=1}^{\infty}$.

 It remains to show that impression of $[f(E_k)]$ consists of a single point only. Recall that by Theorem~\ref{thm-fin-con-homeo} we know that $I[E_k]=\{x\} \in \bd \Om$. Suppose, on the contrary, that
 \[
  I[f(E_k)]\supset \{y, z\}\quad \hbox{ for some }\quad y\not=z\in \bd D.
 \]
  Let, further, $(y_k)$ and $(z_l)$ be sequences of points in $D$ converging to $y$ and $z$ respectively. Clearly, $d(y_{k_n}, z_{l_n})>0$ and $y_{k_n}, z_{l_n}\in f(E_m)$ starting from some $m\geq m_0$ and for sufficiently large $n\geq n_0$. Using the path-connectedness of $\Om$ (implied by its rectifiable connectedness) we join every $y_{k_n}$ and $y_{k_{n+1}}$ by a curve, denoted $\ga_n$ and similarly for points $z_{l_n}$ and $z_{l_{n+1}}$, obtaining for every $n$ a curve $\ga_n'$. In a consequence we get a sequence of connected sets $|\ga_n|$ and $|\ga_n'|$ such that
 \[
  \distM(|\ga_n|, |\ga_n'|)>0\quad \hbox{ for } n\geq n_0.
 \]
 However, by setting $\fv(\ga_n)$ and $\fv(\ga_n')$ for $n=1,\ldots$ we arrive at sequences of connected subsets of $E_m$ with the property that for every $m\geq m_0$ we find $n_m$ such that
 \[
 \fv(\ga_n)\subset E_m\quad\hbox{and}\quad\fv(\ga_n') \subset E_m\quad\hbox{for all}\quad n\geq n_m.
 \]
  Moreover, since $I[E_k]=\{x\}$ it holds that
 \[
  \lim_{n\to \infty} \distM(\fv(\ga_n), \fv(\ga_n'))=0,
 \]
 contradicting Property~\eqref{class-map-prop} of class $\calF(\Om, D)$.

 This completes the proof of Part 3 and, thus, the whole proof  of Lemma~\ref{lem-maps} is completed as well.
\end{proof}

\begin{rem}
 Note that in order to prove Parts (1) and (2) of the lemma we only need implication that $\dM(f(E), f(F))=0$ provided that $\dM(E,F)=0$ in condition \eqref{class-map-prop} of Definition~\ref{class-map}. The opposite implication is used only in the proof of Part (3).
\end{rem}

The remaining part of this section is devoted to presentation of various extension results for mappings in class $\calF$ and their consequences. We begin with an observation that for domains with regular enough boundaries mappings in class $\calF$ induce a bijection between the singleton prime ends part of the prime end boundary and the topological boundary of the underlying domains.

\begin{observ}\label{Thm1-cont-ext}
 Let $\Om$ be a domain locally connected at the boundary, $D$ a domain finitely connected at the boundary and $f\in \calF(D, \Om)$. Then, there exists a bijective map $\Phi:\bdySP D \to \bd \Om$ defined as follows: if $[E_k]$ is a (singleton) prime end in $D$, then we assume
 \[
  \Phi([E_k]):=x\in\bd \Om,\quad \hbox{ where }\quad \{x\}=I[\fv(E_n)].
 \]
\end{observ}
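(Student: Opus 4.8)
The plan is to verify that the map $\Phi:\bdySP D \to \bd\Om$ is \emph{well-defined}, \emph{injective}, and \emph{surjective}, using Lemma~\ref{lem-maps} as the principal tool. First I would check well-definedness. Given a singleton prime end $[E_k]$ in $D$, Part (2) of Lemma~\ref{lem-maps} applies (since $D$ is finitely connected at the boundary and $\Om$ is locally connected at the boundary, and $\fv\in\calF(\Om,D)$ by symmetry of condition \eqref{class-map-prop}), telling us that $[\fv(E_n)]$ is a singleton prime end in $\Om$, so its impression is indeed a single point $x\in\bd\Om$. The value $\Phi([E_k])$ must not depend on the choice of representative chain: if $\{E_k\}$ and $\{E_k'\}$ are equivalent chains, then by Part (3) of Lemma~\ref{lem-maps} (applied to $\fv$) the chains $\{\fv(E_k)\}$ and $\{\fv(E_k')\}$ are equivalent in $\Om$, hence define the same end, whose impression is a single well-defined point. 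This settles that $\Phi$ is a genuine function.

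Next I would establish injectivity. Suppose $[E_k]$ and $[F_k]$ are singleton prime ends in $D$ with $\Phi([E_k])=\Phi([F_k])=x\in\bd\Om$. Then $\{\fv(E_n)\}$ and $\{\fv(F_n)\}$ are singleton prime ends in $\Om$ with a common impression $\{x\}$. Here I would invoke Corollary~\ref{cor-loc-conn}: since $\Om$ is locally connected at the boundary, there is \emph{only one} prime end with $x$ in its impression, namely the canonical one $[G_k^x]$. Therefore $[\fv(E_n)]=[\fv(F_n)]=[G_k^x]$, so the chains $\{\fv(E_n)\}$ and $\{\fv(F_n)\}$ are equivalent in $\Om$. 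Applying Part (3) of Lemma~\ref{lem-maps} to $f$ (which carries equivalent chains to equivalent chains), we recover that $\{f(\fv(E_n))\}=\{E_n\}$ and $\{f(\fv(F_n))\}=\{F_n\}$ are equivalent in $D$, i.e. $[E_k]=[F_k]$ as ends. This gives injectivity.

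For surjectivity, fix $x\in\bd\Om$. Because $\Om$ is locally connected at the boundary, Corollary~\ref{cor-loc-conn} furnishes the canonical singleton prime end $[G_k^x]$ with impression $\{x\}$. Now apply Part (1) of Lemma~\ref{lem-maps} to $f$: the chain $\{f(G_k^x)\}$ is a chain in $D$, and by the ``in particular'' clause of Part (3) it is a singleton prime end in $D$ (since $[G_k^x]$ is a prime end). Call it $[E_k]:=[f(G_k^x)]$. Then $\fv(E_n)=\fv(f(G_n^x))=G_n^x$, so $I[\fv(E_n)]=\{x\}$, whence $\Phi([E_k])=x$. This exhibits a preimage and proves surjectivity.

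I expect the only subtle point — the main obstacle — to be the careful bookkeeping of which direction of the biconditional in \eqref{class-map-prop} and which membership in $\calF$ one is using at each stage, together with confirming that $\fv\in\calF(\Om,D)$ whenever $f\in\calF(D,\Om)$; this symmetry is immediate from the ``if and only if'' form of \eqref{class-map-prop}, but it must be stated explicitly so that Lemma~\ref{lem-maps} can be legitimately applied to both $f$ and $\fv$. The uniqueness clause of Corollary~\ref{cor-loc-conn} is what makes injectivity work, and it is worth emphasizing that this uniqueness is exactly the structural feature of domains locally connected at the boundary that the argument hinges upon.
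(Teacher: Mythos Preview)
Your proof is correct and follows essentially the same route as the paper: well-definedness via Part~(2) of Lemma~\ref{lem-maps}, injectivity via the uniqueness clause of Corollary~\ref{cor-loc-conn}, and surjectivity by pushing the canonical prime end $[G_k^x]$ forward using Parts~(1) and~(3) of Lemma~\ref{lem-maps}. One notational caution: in your surjectivity paragraph you write $f(G_k^x)$ and ``apply Part~(1) \ldots\ to $f$'' where you should write $\fv(G_k^x)$ and apply the lemma to $\fv\in\calF(\Om,D)$ (since $f:D\to\Om$ while $G_k^x\subset\Om$); the paper's own notation is likewise inconsistent at this point, but the underlying mathematics is unaffected.
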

\begin{proof}
 Let $[E_k]$ be a prime end in $D$. Then
Theorem~\ref{thm-fin-con-homeo} implies that $[E_k]$ is a singleton
prime end. By Part 2 of Lemma~\ref{lem-maps} we have that
$[\fv(E_n)]$ is a singleton prime end as well. The map $\Phi$ is
well-defined, since equivalent chains belonging to an end $[E_k]$
have the same impressions. Let now $[E_k]$ and $[F_n]$ be two
singleton prime ends in $D$ such that $[E_k]\not=[F_n]$. Part 2 of
Lemma~\ref{lem-maps} implies that $[\fv(E_k)]$ and $[\fv(F_n)]$ are
non-equivalent singleton prime ends and, since $\Om$ is locally
connected at the boundary, it holds that
 \[
  I[\fv(E_k)]=\{x\}\not=\{x'\}=I[\fv(F_n)].
 \]
  From this, injectivity of $\Phi$ follows immediately.

  Let us choose any $x\in \bd \Om$ and consider $[G_k^x]$ a canonical singleton prime end associated with point $x$ as in Corollary~\ref{cor-loc-conn}. Then Part 1 of Lemma~\ref{lem-maps} applied to $\fv$ gives us that $[\fv(G_k^x)]$ defines an end in $\Om$, whereas Part 3 of Lemma~\ref{lem-maps} allows us o conclude that $[\fv(G_k^x)]$ is in fact a singleton prime end. Hence, $\Phi$ is surjective.
\end{proof}

%
\subsection{Continuous extensions for mappings in $\calF$}

 Recall, that for a domain $\Om\subset X$ and a map $f:\Om\to Y$ one defines \emph{the cluster set of $f$ at $x$} as follows:
 \begin{equation}\label{def-cluster0}
  C(f, x):=\bigcap_{U} \overline{f(U\cap \Om)},
 \end{equation}
 where the intersection ranges over all neighborhoods $U$ of $x$ in $X$.

 Let $f:\Om \to \Om'$ be a homeomorphism and $F$ be its continuous extension to $\overline{\Om}$ provided that it exists.  Denote by
 \[
 \Om'':=\Om'\cup \bigcup_{x\in \bd \Om} C(f, x)\subset \overline{\Om'}
 \]
 and by
 \[
 F(\bd \Om):=\bigcup_{x\in \bd \Om} C(f, x)\subset \bd \Om'.
 \]
 Similarly, by $\bdyPF \Om'$ we denote a set of prime ends with impressions in $F(\bd \Om)$. Clearly $\bdyPF \Om'\subset \bdyP \Om'$. We use this notation in the next result.

  In the theorem below we employ singleton prime ends to provide sufficient and necessary conditions for a homeomorphism in class $\calF$ to have a continuous extension.

\begin{theorem}\label{Thm1-homeo-ext}
 Let $\Om$ be a domain satisfying the bounded turning condition and $f\in \calF(\Om, \Om')$ be a homeomorphism. Suppose that $f$ has a continuous extension mapping $F: \overline{\Om}\to \Om''$, then every prime end in the prime end boundary $\bdyPF \Om'$ is a singleton prime end.

 Furthermore, if every prime end in the prime end boundary $\bdyP \Om'$ is a singleton prime end, then every homeomorphism $f\in \calF(\Om, \Om')$ has a continuous extension $F:\overline{\Om}\to \Om''$.
\end{theorem}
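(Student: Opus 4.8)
The plan is to prove the two implications separately, in both cases exploiting the machinery of Lemma~\ref{lem-maps} together with the prime-end construction at accessible points (Lemma~\ref{lem-aux}) and the characterization of convergence of point sequences into impressions (Lemma~\ref{lem-conv}).

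For the first (necessity) direction, I would argue by contradiction. Suppose $F$ is a continuous extension and suppose there were a prime end $[F_k]\in\bdyPF\Om'$ whose impression $I[F_k]$ contains two distinct points $y\neq z\in\bd\Om'$. By definition of $\bdyPF\Om'$ the impression lies in $F(\bd\Om)$, so both $y$ and $z$ are cluster values of $f$; since $X$ is proper and $\bd\Om$ is compact, I would extract boundary points $x,x'\in\bd\Om$ with $y\in C(f,x)$ and $z\in C(f,x')$, and then pick sequences $x_n\to x$, $x_n'\to x'$ in $\Om$ with $f(x_n)\to y$, $f(x_n')\to z$. The key idea is to feed these sequences through Lemma~\ref{lem-maps}(2): the chain $\{\fv(F_k)\}$ is a chain in $\Om$, and Lemma~\ref{lem-conv} forces $x_n$ and $x_n'$ eventually into every $\fv(F_k)$, so $x=x'$ since the impression $I[\fv(F_k)]$ is a single boundary point (here the bounded turning condition gives local connectedness at the boundary, hence Theorem~\ref{thm-clOmm-cpt-new} applies). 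But then $y,z\in C(f,x)=\{F(x)\}$ by continuity of $F$ at the single point $x$, forcing $y=z$, a contradiction. The delicate point is making sure both sequences genuinely land in the same impression; I expect that to follow from Lemma~\ref{lem-conv} applied to the pulled-back chain, using $d_X$-convergence of the preimage sequences to a point of $I[\fv(F_k)]\subset\bd\Om$.

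For the second (sufficiency) direction, assume every prime end in $\bdyP\Om'$ is a singleton and I want to construct the continuous extension $F$. I would define $F$ on $\bd\Om$ as follows: given $x\in\bd\Om$, the bounded turning condition makes $x$ accessible and locally connected, so by Corollary~\ref{cor-loc-conn} there is a canonical prime end $[G_k^x]$ with $I[G_k^x]=\{x\}$. Lemma~\ref{lem-maps}(1) shows $\{f(G_k^x)\}$ is a chain in $\Om'$, hence determines a prime end whose impression — being a singleton by hypothesis — is a single point $y\in\bd\Om'$; set $F(x):=y$. The main content is then checking continuity of $F$ at each $x\in\bd\Om$: for a sequence $z_n\to x$ with $z_n\in\overline\Om$, I would use Lemma~\ref{lem-conv} to show the $z_n$ eventually enter every $G_k^x$, whence their images $f(z_n)$ eventually enter every $f(G_k^x)$, and therefore cluster only at the singleton impression $F(x)$; this shows $C(f,x)=\{F(x)\}$, i.e. $F$ is single-valued and continuous.

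The main obstacle I anticipate is the sufficiency direction, specifically verifying that $F$ is well-defined and continuous independently of the chain chosen at $x$ and that the images behave correctly under limits. The subtlety is that Lemma~\ref{lem-maps} is stated for $\Om$ locally connected at the boundary, which the bounded turning condition supplies via Remark~\ref{rem-lin-con}(3), so I must make sure I invoke local connectedness (not full finite connectedness of $\Om'$) at every step. I would rely throughout on the hypothesis that \emph{all} of $\bdyP\Om'$ consists of singletons — a global assumption stronger than the $\bdyPF\Om'$ conclusion of the first part — precisely to guarantee that the image chain $\{f(G_k^x)\}$, whatever prime end it represents, has a singleton impression; this asymmetry between the two directions is what makes the statement's hypotheses differ ($\bdyPF$ versus $\bdyP$), and I would make that dependence explicit in the write-up.
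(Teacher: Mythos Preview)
Your sufficiency argument matches the paper's: define $F(x)$ as the impression of $[f(G_k^x)]$ and check continuity via Lemma~\ref{lem-conv}. Two minor points: (i) to invoke the hypothesis ``every prime end in $\bdyP\Om'$ is a singleton'' you must first know that $[f(G_k^x)]$ is a \emph{prime} end, not just an end; this is what Lemma~\ref{lem-maps}(3) supplies, whereas you cite only Part~(1); (ii) your continuity check literally covers only $z_n\in\Om$, since $G_k^x\subset\Om$; the paper handles boundary sequences $z_n\in\bd\Om$ by a separate diagonal approximation (its Case~3).

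The necessity argument has a genuine gap. You claim the pulled-back end $[\fv(F_k)]$ has singleton impression because bounded turning gives local connectedness and ``Theorem~\ref{thm-clOmm-cpt-new} applies''. But that theorem (and Corollary~\ref{cor-loc-conn}) concern \emph{prime} ends only; arbitrary ends of a locally connected domain need not be singletons --- in the unit disc the annular chain $E_k=\{z:1-1/k<|z|<1\}$ is an end whose impression is all of $S^1$. Lemma~\ref{lem-maps}(2) produces $[\fv(F_k)]$ only as an end; its second clause, which would upgrade it to a singleton prime end, requires $\Om'$ to be finitely connected at the boundary, and that is not assumed here. Your appeal to Lemma~\ref{lem-conv} is also circular: applying it in $\Om$ to the chain $\{\fv(F_k)\}$ presupposes $x\in I[\fv(F_k)]$, which is exactly what you are trying to establish, and you cannot apply it in $\Om'$ instead since $\Om'$ carries no bounded-turning hypothesis. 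The telling symptom is that your argument never uses that $[F_k]$ is \emph{prime}. The paper's route is precisely to exploit primeness: pick any $y\in I[\fv(F_k)]$, push the canonical $[G_k^y]$ forward via Lemma~\ref{lem-maps}(3) to a singleton prime end dividing $[F_k]$, use primeness of $[F_k]$ to force equivalence, and then pull back to see that $[\fv(F_k)]$ divides $[G_k^y]$ --- hence is itself a singleton, yielding the desired contradiction.
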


The following remark is used in the proof of Theorem~\ref{Thm1-homeo-ext}.

\begin{rem}\label{rem-Thm1-homeo-ext}
(1) Proposition 7.1 in \cite{abbs} asserts that if an end has a
singleton impression, then it is a prime end. If a domain satisfies
the bounded turning condition, then that proposition holds also for
prime ends as in Definition~\ref{def-chain} (with condition
\eqref{pos-dist} stated for the Mazurkiewicz distance $\distMa$
instead of $d$).

\noindent (2) Recall that Proposition 7.2 in \cite{abbs} stays that
$[E_k]$ is a singleton end if and only if $\diam\,E_k\to0$ as
$k\to\infty$.
\end{rem}

\begin{proof}
 Let $F$ be a continuous extension of $f$ as in the statement of the theorem. Suppose that there exists a prime end $[E_k]\in \bdyPF \Om'$ such
that its impression contains more than one point, for instance let
\[
 I[E_k]\supset \{x, x'\}
\]
 for some distinct $x, x'\in F(\bd \Om)$. Since impressions with
more than one point are necessarily continua, we can choose $x$ and
$x'$ arbitrarily close in metric $d_Y$, yet such that the distance
between preimages of $x$ and $x'$ is positive:
 \begin{equation}\label{Thm1-dist}
 d_X(F^{-1}(x), F^{-1}(x'))>0.
 \end{equation}
 Indeed, observe that Part 2 of Lemma~\ref{lem-maps} ensures that
 \[
  F_k:=\fv(E_k)\quad\hbox{for }k=1, 2, \ldots
 \]
 define an end $[F_k]$ in $\Om$. If for all $x\not=x'\in I[E_k]$ it holds that $d_X(F^{-1}(x), F^{-1}(x'))=0$, then $[F_k]$ is a singleton prime end and $[f(F_k)]$ divides $[E_k]$, implying that $[E_k]$ must be a singleton prime end (as $[f(F_k)]$ is such by Part 3 of Lemma~\ref{lem-maps}). This, however, contradicts the assumption that $I[E_k]$ consists of more than one point. Thus, \eqref{Thm1-dist} holds true.

 Moreover,
  \[
  I[F_k]\supset \{F^{-1}(x), F^{-1}(x')\}
  \]
  and under assumption that $x\not=x'$ it holds that $[F_k]$ is not a singleton end. Indeed, if $I[F_k]$ is a singleton end, then $[F_k]$ is a prime end,
since $\Om$ is locally connected at the boundary and
Corollary~\ref{cor-loc-conn} applies. Then
\[
 F^{-1}(x)=F^{-1}(x')
\]
contradicting the assumption that $F$ is an extension of $f$. On the
other hand, let $y\in I[F_k]$. Since $\Om$ is locally connected,
Corollary~\ref{cor-loc-conn} gives us a prime end $[G_k]$ with the
property that $I[G_k]=\{y\}$. By construction $[G_k]$ divides
$[E_k]$. Note that, by Part 3 of Lemma~\ref{lem-maps},
\[
 [E_k']:=[F(G_k)]
\]
 is a prime end and divides $[E_k]$. Since $[E_k]$ is a prime end,
then every end $[E_k']$ dividing $[E_k]$ is equivalent to $[E_k]$,
i.e. $[E_k]$ divides $[E_k']$. Thus, in particular, we get that
\[
 [\fv(E_k)]=[F_k]\quad\hbox{divides}\quad[\fv(E_k')]=[G_k].
\]
 Therefore, $[F_k]$ is a singleton end and from Remark~\ref{rem-Thm1-homeo-ext} we infer that $[F_k]$ is a singleton prime end in $\Om$. We reach a contradiction with the assumptions that $x\not=x'$. Hence $[E_k]$ is a singleton prime end.


 Let us now show the necessity part of the theorem. Suppose now that every prime end in $\bdyP \Om'$ is a singleton prime end. For any $x_0\in \bd \Om$ we can assign a prime end $[E_n]$ in $\bdyP \Om'$ corresponding to $x_0$. The existence of such a prime end follows directly from Corollary~\ref{cor-loc-conn} as a domain satisfying the bounded turning condition is locally connected at the boundary (cf. Example~\ref{ex-domains}). Indeed, let $[G_k]$ be a canonical prime end with a singleton impression $I[G_k]=\{x_0\}$. Part 1 of Lemma~\ref{lem-maps} gives us that $[E_n]=[f(G_k)]$ is an end in $\bdyP \Om'$. By the reasoning similar to the final part of the argument of the sufficiency part of the theorem we obtain that $[E_n]$ is in fact a prime end. Since, upon assuming the opposite, namely that there exists prime end $[E_n']$ dividing $[E_n]$ and not equivalent to $[E_n]$, we obtain that $[\fv(E_n)]$ divides $[G_k]$ and is not equivalent to it. However, this is impossible as $[G_k]$ is a singleton prime end. Moreover, Part 3 of Lemma~\ref{lem-maps} implies that $[E_n]$ is a singleton prime end. Denote $I[E_n]=\{y_{x_0}\}\subset \bd \Om'$. We define $F: \overline{\Om}\to \overline{\Om'}$ by the formula:
 \begin{equation}\label{defn-ext}
 F(x)=
 \begin{cases}
  &f(x),\quad x\in \Om,\\
  &y_x,\quad x\in \bd \Om.
 \end{cases}
 \end{equation}
 From the definition of $F$ it follows that the above defined $F(\bd \Om)= F|_{\bd \Om}\subset \bd \Om'$. In order to show that $F$ is continuous in $\ovOm$ we need to consider three cases.

 \emph{Case 1.} Let $x\in \Om$ and $x_n\to x$ in $d_X$ for a sequence of points $(x_n)\subset \Om$. Then $F\equiv f$ and the continuity of $F$ follows from the continuity of $f$.

 \emph{Case 2.} Let $x\in \bd \Om$ and $x_n\to x$ in $d_X$ for a sequence of points such that $(x_n)\subset \Om$ for large enough $n$. Since $d_X(x_n, x)\to 0$ for $n\to \infty$ we have by Lemma~\ref{lem-conv} that $x_n\in G_k$ for every $k$ and all sufficiently large $n$; hence $f(x_n)\in f(G_k)$. Furthermore, by the above discussion and Remark~\ref{rem-Thm1-homeo-ext} we have that
 \[
 \lim_{k\to \infty}\diam f(G_k)=0\quad\hbox{ and }\quad I[f(G_k)]=\{y_x\}.
 \]
 Therefore, $f(x_n)\to y_x=F(x)$ in $d_Y$ for $n\to \infty$ and the continuity of $F$ follows.

 \emph{Case 3.} Finally, let $x\in \bd \Om$ and $x_n\to x$ in $d_X$ for a sequence of points such that $(x_n)\subset \bd \Om$ for sufficiently large $n$. By the continuity result proven in Case 2, for every $x_n$ we can find such a point $x_n'\in \Om$ that
 \[
  d_X(x_n, x_n')<\frac{1}{n}\quad \hbox{ and } \quad d_Y(F(x_n), F(x_n'))<\frac{1}{n}.
 \]
 By the choice of sequence $(x_n)$ and by the triangle inequality we obtain that
 \[
  d_X(x_n', x)\leq d_X(x_n', x_n)+ d_X(x_n, x)< \frac{1}{n}+d_X(x_n, x)\to 0,\quad \hbox{ as } n\to \infty,
 \]
 and hence $x_n'\to x$. Using again Case 2, we have that for any $\epsilon>0$, there is $n(\epsilon)$ such that for all $n>n(\epsilon)$ it holds $d_Y(F(x_n'), F(x))< \frac{\epsilon}{2}$. The triangle inequality implies that
 \[
 d_Y(F(x_n), F(x))\leq d_Y(F(x_n'), F(x))+d_Y(F(x_n'), F(x_n))<\frac{1}{n}+ \frac{\epsilon}{2}.
 \]
 From this the continuity of $F$ follows, completing the proof in this case and the proof of Theorem~\ref{Thm1-homeo-ext} as well.
\end{proof}

\begin{cor}\label{cor-homeo-ext}
 Let $\Om$ be a domain satisfying the bounded turning condition and $\Om'$ be a domain finitely connected at the boundary. Then, a homeomorphism $f\in \calF(\Om, \Om')$ extends to a continuous mapping $F: \overline{\Om}\to \Om''$. Furthermore, $\Om''=\overline{\Om'}$.
\end{cor}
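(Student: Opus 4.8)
The plan is to derive Corollary~\ref{cor-homeo-ext} as a direct application of the sufficiency part of Theorem~\ref{Thm1-homeo-ext}, reducing essentially everything to verifying the hypotheses of that theorem and then separately pinning down the exact target of the extension. The key structural observation is that a domain finitely connected at the boundary is precisely the class for which Theorem~\ref{thm-fin-con-homeo} guarantees that \emph{all} prime ends have singleton impressions. So the very first step is to invoke Theorem~\ref{thm-fin-con-homeo} for $\Om'$: since $\Om'$ is finitely connected at the boundary, every prime end in $\bdyP\Om'$ is a singleton prime end. This is exactly the hypothesis of the second ("furthermore") clause of Theorem~\ref{Thm1-homeo-ext}.

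Next I would check that the remaining hypotheses of Theorem~\ref{Thm1-homeo-ext} are met. The source domain $\Om$ is assumed to satisfy the bounded turning condition, matching the standing assumption of that theorem verbatim, and $f\in\calF(\Om,\Om')$ by hypothesis. Therefore the sufficiency half of Theorem~\ref{Thm1-homeo-ext} applies directly and yields a continuous extension $F:\overline{\Om}\to\Om''$, where $\Om''=\Om'\cup\bigcup_{x\in\bd\Om}C(f,x)$ as in the notation preceding the theorem. This establishes the first assertion of the corollary with essentially no extra work; the entire content of the first sentence is a specialization of the theorem to the case where finite connectedness at the boundary of $\Om'$ is the concrete geometric condition forcing all prime ends to be singletons (via Theorem~\ref{thm-fin-con-homeo}).

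The substantive part, and the step I expect to require the most care, is the identification $\Om''=\overline{\Om'}$. By construction $\Om''\subset\overline{\Om'}$, so I would need to prove the reverse inclusion $\overline{\Om'}\subset\Om''$, equivalently that $\bd\Om'\subset F(\bd\Om)=\bigcup_{x\in\bd\Om}C(f,x)$. The natural approach is to take an arbitrary $y\in\bd\Om'$ and exhibit a boundary point $x\in\bd\Om$ with $y\in C(f,x)$. Here I would use Theorem~\ref{thm-fin-con-homeo} again: since $\Om'$ is finitely connected at the boundary, $y$ is the impression of some prime end, and in fact $y$ is accessible. Applying Part~2 of Lemma~\ref{lem-maps} (noting $\fv\in\calF(\Om',\Om)$ with $\Om$ locally connected at the boundary, as bounded turning implies local connectedness at the boundary by Remark~\ref{rem-lin-con}(3)) to the canonical singleton prime end $[G_k^y]$ at $y$ produces a singleton prime end $[\fv(G_k^y)]$ in $\Om$ with impression some $\{x\}\subset\bd\Om$. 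One then shows $F(x)=y$: the definition \eqref{defn-ext} assigns to $x$ the impression point of $[f(G_k^x)]$, and the bijection $\Phi$ from Observation~\ref{Thm1-cont-ext} guarantees this equals $y$, whence $y=F(x)\in C(f,x)\subset F(\bd\Om)$.

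The main obstacle will be making the surjectivity argument rigorous rather than circular: I must be careful to confirm that the point $x$ produced from $y$ via $\fv$ actually maps back to $y$ under the extension $F$, which means checking compatibility between the definition of $F$ in \eqref{defn-ext} (built from forward images of canonical prime ends of $\Om$) and the backward construction via $\fv$. The cleanest way to close this gap is to appeal to Observation~\ref{Thm1-cont-ext}, which already establishes that $\Phi:\bdySP\Om'\to\bd\Om$ (with the roles of the domains as stated there) is a \emph{bijection}, so that every $y\in\bd\Om'$ indeed corresponds to some $x\in\bd\Om$ with the matching impression relation, giving $F(\bd\Om)=\bd\Om'$ and hence $\Om''=\overline{\Om'}$. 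Once surjectivity onto $\bd\Om'$ is secured this way, the equality $\Om''=\overline{\Om'}$ follows immediately, completing the proof.
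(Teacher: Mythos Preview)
Your approach is essentially the same as the paper's: invoke Theorem~\ref{thm-fin-con-homeo} to verify the hypothesis of Theorem~\ref{Thm1-homeo-ext} for the first assertion, then for $\Om''=\overline{\Om'}$ pick $y\in\bd\Om'$, produce a singleton prime end at $y$, pull it back via Part~2 of Lemma~\ref{lem-maps} to obtain $x\in\bd\Om$, and conclude $y$ lies in the image (the paper phrases this last step directly via the cluster set inclusion $\fv(E_n)\subset U_n\cap\Om$ rather than through Observation~\ref{Thm1-cont-ext} and~\eqref{defn-ext}, but the content is the same). One small correction: you should not speak of the ``canonical singleton prime end $[G_k^y]$'' at $y\in\bd\Om'$, since that construction (Corollary~\ref{cor-loc-conn}) requires local connectedness at the boundary, whereas $\Om'$ is only finitely connected; just take any prime end with impression $\{y\}$ furnished by Theorem~\ref{thm-fin-con-homeo}, and note that Part~2 of Lemma~\ref{lem-maps} applies directly to $f\in\calF(\Om,\Om')$ without needing to pass to $\fv\in\calF(\Om',\Om)$.
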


\begin{proof}
 Theorem~\ref{thm-fin-con-homeo} stays that finitely connected at the boundary domain $\Om'$ has only singleton prime ends. Then, Theorem~\ref{Thm1-homeo-ext} implies immediately the first part of the assertion.

 In order to show the second assertion, let us suppose that $\bd \Om'\setminus \Om''\not=\emptyset$, i.e. there exists \[
 y\in \partial \Om'\setminus \bigcup_{x\in \bd \Om} C(f, x).
 \]
  Theorem~\ref{thm-fin-con-homeo} implies that there is $[E_n]\in \bdySP \Om'$ such that $I[E_n]=\{y\}$. Furthermore, Part 2 of Lemma~\ref{lem-maps} gives us that $[\fv(E_n)]$ is a singleton prime end in $\Om$ with impression denoted by $\{x_y\}$. Then,
 \[
  \fv(E_n)\subset U_n\cap \Om,\quad \hbox{for }\quad n=1,2,\ldots
 \]
  for some neighborhoods $U_n$ of $x_y$. In a consequence, the definition of cluster sets~\eqref{def-cluster0} implies that $y\in C(f, x_y)$ and, hence, $y\in \Om''$ contradicting our initial assumption.

 This completes the proof of the second assertion and the whole proof of the corollary as well.
\end{proof}

\begin{ex}
  Observe that a slit ball and cusp-type domains $\Om$ in Example~\ref{ex-notQC} are locally connected at the boundary and, therefore, homeomorphisms in $\calF(B^3,\Om)$ have continuous extensions to mappings between $\overline{B^3}$ and $\overline{\Om}$.
\end{ex}

\begin{rem}\label{rem-herkos}
 In the setting of quasiconformal analysis in $\R^n$ the results
corresponding to Corollary~\ref{cor-homeo-ext} are due to e.g.
Herron--Koskela, see Theorem 3.3 and Corollary 3.5 in \cite{hek}. In
particular, part (a) of Corollary 3.5 asserts that continuous
extension of a quasiconformal mapping between domains $\Om$ and
$\Om'$ to the topological closure of domains exists if and only if
$\Om$ is QC-flat on the boundary and $\Om'$ is a QED domain, cf.
Definition~\ref{qc-flat} above and \cite[Section 2.B]{hek}. Note
that domains satisfying the bounded turning condition are locally
connected at the boundary, see Part 3 of Remark~\ref{rem-lin-con},
while QED domains are finitely connected at the boundary, cf.
Example~\ref{ex-domains}. Furthermore, Lemma 5.6 in
Martio--Ryazanov--Srebro--Yakubov~\cite{mrsy2} shows that a QC-flat
domain is strongly accessible, which in turn results in finite
connectedness at the boundary, by the argument verbatim to the proof
of Theorem 6.4 in N\"akki~\cite{na3}.

 In the setting of metric spaces an approach to extension problem for quasiconformal mappings is presented in Andrei~\cite{andr}.
\end{rem}

%
\subsection{Homeomorphic extensions for mappings in $\calF$}

 In our next results we describe conditions for existence of homeomorphic extensions of mappings in $\calF$. In the setting of quasiconformal mappings in $\R^n$ the corresponding results can be found in V\"ais\"al\"a~\cite{va1}, cf. Theorems 17.17, 17.18 and 17.20, also in Herron--Koskela~\cite{hek}, Theorem 3.3(e) and Corollaries 3.5(c) and 3.6. Conditions presented in \cite{va1} involve collared domains, see Definition~\ref{def-collar} and Jordan domains, while conditions in \cite{hek} appeal to quasiextremal distance domains, QC-flat and QC-accessible domains, see Sections 2.B and 3.A in \cite{hek}. Below we take different approach and apply prime ends to ensure existence of homeomorphic extensions.

\begin{theorem}\label{Thm1-homeo-ext-nec}
 Let $\Om$ be a domain satisfying the bounded turning condition and $D$ be a domain. Then, a homeomorphism $f\in \calF(\Om, D)$ extends to a homeomorphism $F: \overline{\Om}\to \overline{D}$ if and only if:
 \smallskip 
  
\noindent (1) non-equivalent prime ends in $\bdyP D$ have distinct impressions, and\\
 (2) every prime end in $\bdyP D$ is a singleton prime end.
\end{theorem}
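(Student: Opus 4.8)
The plan is to prove the two implications separately, leaning on Theorem~\ref{Thm1-homeo-ext} and Lemma~\ref{lem-maps}, and to exploit that $\overline{\Om}$ is compact (Assumptions (A) make $X$ proper and $\Om$ is bounded): a continuous bijection from $\overline{\Om}$ onto the metric, hence Hausdorff, space $\overline{D}$ is automatically a homeomorphism. So in each direction the bulk of the work is producing a continuous extension and then checking bijectivity.

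\emph{Sufficiency.} Assume (1) and (2). Condition (2) is exactly the hypothesis of the second part of Theorem~\ref{Thm1-homeo-ext}, so $f$ extends to a continuous $F\colon\overline{\Om}\to\Om''$, with $F(x)=y_x$ on $\bd\Om$, where $\{y_x\}=I[f(G_k^x)]$ for the canonical prime end $[G_k^x]$ of Corollary~\ref{cor-loc-conn}. I first upgrade $F$ to a bijection onto $\overline{D}$. Since $F$ is continuous, for $x\in\bd\Om$ the cluster set $C(f,x)$ collapses to the single point $\{F(x)\}\subset\bd D$, so $\Om''\subset\overline{D}$. For the reverse inclusion take $y\in\bd D$, choose $y_n\to y$ with $y_n\in D$, set $x_n=\fv(y_n)$, and extract by compactness of $\overline{\Om}$ a subsequence $x_{n_j}\to x\in\overline{\Om}$; continuity of $\fv$ on $D$ forces $x\in\bd\Om$, whence $y\in C(f,x)=\{F(x)\}$, i.e. $y=F(x)$. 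Thus $F(\bd\Om)=\bd D$, $\Om''=\overline{D}$, and $F$ is surjective.

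For injectivity it is enough to separate two distinct $x,x'\in\bd\Om$, since $F|_\Om=f$ is injective and $F(\bd\Om)\subset\bd D$ is disjoint from $F(\Om)=D$. This is where (1) is used. If $F(x)=F(x')$, then the singleton prime ends $[f(G_k^x)]$ and $[f(G_k^{x'})]$ (singleton by Part 3 of Lemma~\ref{lem-maps}) share the impression $\{F(x)\}$, so by (1) they are equivalent; applying $\fv$ to the defining inclusions (as in the proof of Part 3 of Lemma~\ref{lem-maps}) transports the equivalence back to $[G_k^x]$ and $[G_k^{x'}]$, contradicting that these canonical prime ends have distinct impressions $\{x\}\neq\{x'\}$. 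Hence $F$ is a continuous bijection $\overline{\Om}\to\overline{D}$, and compactness of $\overline{\Om}$ makes it a homeomorphism.

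\emph{Necessity.} Conversely, let $F\colon\overline{\Om}\to\overline{D}$ be a homeomorphic extension. Being continuous, the first part of Theorem~\ref{Thm1-homeo-ext} gives that every prime end in $\bdyPF D$ is singleton; as $F$ is a bijection of closures it maps $\bd\Om=\overline{\Om}\setminus\Om$ onto $\bd D=\overline{D}\setminus D$, so $\bdyPF D=\bdyP D$ and (2) follows. For (1) I argue by contraposition: let $[E_k],[E_k']$ be prime ends in $\bdyP D$ with a common (by (2), singleton) impression $\{y\}$, $y\in\bd D$, and set $x=F^{-1}(y)\in\bd\Om$. By Part 2 of Lemma~\ref{lem-maps} the pullbacks $\{\fv(E_k)\}$ and $\{\fv(E_k')\}$ are chains in $\Om$, and since the homeomorphism $F^{-1}$ of $\overline{D}$ onto $\overline{\Om}$ commutes with the nested intersections of closures, $I[\fv(E_k)]=F^{-1}\big(\bigcap_k\overline{E_k}\big)=\{x\}=I[\fv(E_k')]$. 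As $\Om$ satisfies the bounded turning condition, a chain with singleton impression is a prime end (Remark~\ref{rem-Thm1-homeo-ext}(1)) and, by the uniqueness in Corollary~\ref{cor-loc-conn}, equals the canonical $[G_k^x]$; hence $[\fv(E_k)]$ and $[\fv(E_k')]$ are both $[G_k^x]$, so equivalent. Transporting equivalence forward by $f$ yields $[E_k]\sim[E_k']$, which is (1).

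\emph{Main obstacle.} The equivalence bookkeeping under $f$ and $\fv$ is purely formal once one observes $E\subset F\iff f(E)\subset f(F)$ for a bijection. The delicate points are instead (i) identifying $\Om''$ with $\overline{D}$, i.e. surjectivity of the boundary extension, which rests on compactness of $\overline{\Om}$ together with continuity of $\fv$, and (ii) the impression identity $I[\fv(E_k)]=\{x\}$ in the necessity part. I expect (ii) to need the most care: it is the only place where the global homeomorphism $F$ of the closures, rather than merely $f$ and the chain structure, is used to pin down impressions, and one must justify that $F^{-1}$ carries the impression of a chain to the impression of its image through the compatibility of $F^{-1}$ with closures and countable intersections.
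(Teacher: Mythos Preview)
Your proof is correct and in several places cleaner than the paper's own argument, though the overall architecture is the same: both directions go through Lemma~\ref{lem-maps} and the canonical prime ends of Corollary~\ref{cor-loc-conn}. The genuine differences are these. First, you systematically exploit compactness of $\overline{\Om}$: for surjectivity you extract a convergent subsequence of preimages, and for the homeomorphism you invoke the compact-to-Hausdorff trick. The paper instead argues surjectivity by claiming that assumption (2) yields a prime end at every $x\in\bd D$ (a step that is not justified by (2) as stated) and then checks continuity of $F^{-1}$ by hand. Your route sidesteps both issues. Second, for necessity of (2) you appeal directly to the first part of Theorem~\ref{Thm1-homeo-ext} together with $F(\bd\Om)=\bd D$; the paper gives an ad hoc argument via $Y=F^{-1}(I[F_n])$ whose concluding line (``$F(Y)\cap I[F_n]=\emptyset$'') is at best elliptic. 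Third, for injectivity you use condition (1) in its contrapositive form directly, whereas the paper routes through an auxiliary prime end $[H_n]$ and a division argument. The one point in your write-up that genuinely needs the global homeomorphism $F$---the identity $I[\fv(E_k)]=F^{-1}(I[E_k])$---you have handled correctly: $F^{-1}$ is a homeomorphism of the closures, so it commutes with closure and with intersection, and the impression transports as claimed.
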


Note that by Theorem~\ref{thm-fin-con-homeo} assumptions of the second part of Theorem~\ref{Thm1-homeo-ext-nec} are satisfied by e.g. domains locally connected at the boundary.

\begin{rem}
 In general a map in class $\calF(\Om, D)$ need not preserve the local connectivity at the boundary. Indeed, if $\Om$ is a unit disc in $\R^2$ and $D\subset \R^2$ is a slit-disc, then direct computations show that a Riemann map $f\in \calF(\Om, D)$ even though $D$ fails to be locally connected at the boundary. Nevertheless, if a map $f\in \calF(\Om, D)$ extends to a homeomorphism $F:\overline{\Om}\to \overline{D}$, then local(finite) connectivity at the boundary of $\Om$ is preserved under $F$ and so $D$ is also locally(finite) connected at the boundary, cf. Remark 17.8 in \cite{va1}. Therefore, the sufficiency part of Theorem~\ref{Thm1-homeo-ext-nec} additionally implies that $D$ is locally connected at the boundary.
\end{rem}

\begin{proof}[Proof of Theorem~\ref{Thm1-homeo-ext-nec}]
 Let $F$ be a homeomorphic extension of $f$ from $\ovOm$ onto $\ovD$. By Part 3 of Remark~\ref{rem-lin-con} and  Corollary~\ref{cor-loc-conn} prime end boundary $\bdyP \Om$ consists of singelton prime ends only. Then, Parts 1 and 3 of Lemma~\ref{lem-maps} imply that $[f(E_n)]\in \bdySP D$ for every prime end $[E_n]\in \bdySP \Om=\bdyP \Om$. Suppose that there exists a prime end
 \[
  [F_n]\in \bdyP D\setminus \bdySP D.
 \]
 Then, a set
 \[
  Y:=F^{-1}(I[F_n])\subset \bd \Om
 \]
  and for all $y\in Y$ it holds that $[F(G_k^y)]\in \bdySP D$, where $[G_k^y]$ is a canonical prime end in $\Om$ as constructed in Corollary~\ref{cor-loc-conn}. However, then we obtain that
  \[
   F(Y)\cap I[F_n]=\emptyset,
  \]
  which contradicts the definition of $Y$. Thus, $\bdyP D=\bdySP D$.

  Since $F$ and $F^{-1}$ are injective, then it is not possible that two nonequivalent prime ends $[E_k]$ and $[F_n]$ in $D$ have the same (singleton) impressions, i.e. $I[E_k]=\{x\}=I[F_n]$ for some $x\in \bd D$.

 For the opposite implication suppose that $\bdySP D=\bdyP D$ and that non-equivalent prime ends have distinct impressions. Then, Theorem~\ref{Thm1-homeo-ext} implies continuity of extension map $F$ defined in \eqref{defn-ext}.  In order to prove injectivity of $F$ let us consider two cases: first, for distinct points $x, y\in \Om$ or $x\in \Om, y\in \bd \Om$ we clearly have that
 \[
 F(x)=f(x)\not=f(y)=F(y)\quad \hbox{ or, respectively }\quad F(x)=f(x)\not=F(y),
 \]
  due to $f$ being a homeomorphism and the definition of $F$. Suppose now that for distinct $x, y\in \bd \Om$ we have
  $F(x)=F(y)=z\in \bd D$. Local connectivity at the boundary of $\Om$ implies existence of canonical prime ends
  $[G_k^x]$ and $[G_k^y]$ in $\Om$ such that
  \[
   I[G_k^x]=\{x\}\not=\{y\}=I[G_k^y].
  \]
  If $I[f(G_k^x)]=I[f(G_k^y)]=\{z\}$, then by assumption (2) on $\bd D$ we have that every $z\in \bd D$ is an impression of some prime end in $D$, denoted $[H_n]$. Moreover, both prime ends $[f(G_k^x)]$ and $[f(G_k^y)]$ divide $[H_n]$ and, therefore, are equivalent (equal) to $[H_n]$ and to each other (by the definition of prime ends). Then, Part 2 of Lemma~\ref{lem-maps} applied to $\fv$ gives us that also $[G_k^x]=[G_k^y]$. More precisely, we get that $\{\fv(f(G_k^x))\}$ and $\{\fv(f(G_k^y))\}$ are chains (and thus give rise to ends) in $\Om$ and are equivalent dividing both $[G_k^x]$ and $[G_k^y]$. This contradiction completes the proof for injectivity of $F$.

  In order to show that $F$ is onto let $x\in \bd D$. Then, by assumption (2) on $\bd D$, there exists a prime end $[E_n]$ with $I[E_n]=\{x\}$. Part 2 of Lemma~\ref{lem-maps} gives us that $[\fv(E_n)]$ is a singleton prime end in $\Om$. Indeed, if an end $[\fv(E_n)]$ has a non-singleton impression $I$, then it is divisible by a canonical prime end $[G_k^x]$ for some $x\in I$ and $[f(G_k^x)]$ is equivalent to $[E_n]$. By the reasoning as in the beginning of the proof for Part 3 of Lemma~\ref{lem-maps} applied to $\fv$, we obtain that $[\fv(f(G_k^x))]=[G_k^x]$ and $[\fv(E_n)]$ are equivalent. However, this is impossible, since $[G_k^x]$ is a singelton prime end. Thus, $[\fv(E_n)]$ is a singleton end, which by Part (1) of Remark~\ref{rem-Thm1-homeo-ext} implies that it is a singleton prime end. Therefore, by Corollary~\ref{cor-loc-conn} there is a point $y\in \bd \Om$ such that $I[\fv(E_n)]=\{y\}$. Moreover, by construction $F(y)=x$.

  Finally, $F^{-1}$ is continuous by the argument similar to the one for continuity of map $F$ in Theorem~\ref{Thm1-homeo-ext} and, therefore, will be omitted.
 \end{proof}

\subsection{Homeomorphic extensions for mappings in $\calF$ with respect to topological and prime end boundaries}
In the result below we deal with the extension of the Mazurkiewicz distance between sets to the setting of prime ends. Let $\Om\subset X$ be a domain finitely connected at the boundary and let $[E_k]$ and $[F_n]$ be prime ends in $\bdyP \Om$. By Theorem~\ref{thm-fin-con-homeo} we know that $\bdySP \Om=\bdyP \Om$ and so all prime ends are singletons. We set
\begin{equation*}\label{distMst-def}
 \distMst([E_k], [F_n])= \distMa(I[E_k], I[F_n]).
\end{equation*}
Since equivalent chains representing given prime ends have the same impressions, the distance is well defined and defines a metric in $\bdySP \Om$ by the properties of the Mazurkiewicz metric $\distMa$. Similarly we define
\[
 \distMst(x, [E_k])= \distMa(x, I[E_k])\quad \hbox{ for }x\in \Om, [E_k]\in \bdySP
 \Om,
\]
and $\distMst(x,y)= \distMa(x,y)$ for $x,y \in \Om$.


The following theorem is the main result of this section.

\begin{theorem}\label{Thm1-homeo-pext}
 Let $\Om$ be a domain satisfying the bounded turning condition and $D$ be a domain finitely connected at the boundary. Then, a homeomorphism $f\in \calF(\Om, D)$ extends to a homeomorphism $F_{\rm P}: \ovOm \to D\cup \bdyP D$ continuous with respect to the Mazurkiewicz distance $\distMst$. Namely, we define
 \begin{equation}\label{defn-prime-ext}
 F_{\rm P}(x)=
 \begin{cases}
  &f(x),\quad x\in \Om,\\
  &[f(G_k^x)],\quad x\in \bd \Om,
 \end{cases}
 \end{equation}
  where $[G_k^x]$ is a unique prime end in $\bdySP \Om$ given by Corollary~\ref{cor-loc-conn} such that $I[G_k^x]=\{x\}$.
\end{theorem}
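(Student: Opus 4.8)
The plan is to verify in turn that formula \eqref{defn-prime-ext} produces a well-defined map, that this map is a bijection of $\ovOm$ onto $D\cup\bdyP D$, that it is continuous in the Mazurkiewicz distance $\distMst$, and finally that its inverse is continuous, the last point being handled by compactness. Since $\Om$ satisfies the bounded turning condition it is locally connected at the boundary (Remark~\ref{rem-lin-con}(3)), so Corollary~\ref{cor-loc-conn} supplies, for each $x\in\bd\Om$, the unique canonical prime end $[G_k^x]$ with $I[G_k^x]=\{x\}$; this makes the second branch of \eqref{defn-prime-ext} meaningful. Because $D$ is finitely connected at the boundary, Theorem~\ref{thm-fin-con-homeo} gives $\bdyP D=\bdySP D$. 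Applying Part (1) of Lemma~\ref{lem-maps} to the chain $\{G_k^x\}$ shows $\{f(G_k^x)\}$ is a chain in $D$, and Part (3) upgrades $[f(G_k^x)]$ to a singleton prime end; thus $F_{\rm P}$ maps $\bd\Om$ into $\bdyP D$ and is well defined.

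For injectivity I would separate the cases. On $\Om$ the map is $f$, hence injective, and a point of $\Om$ and a point of $\bd\Om$ have images in $D$ and in $\bdyP D$, so they cannot coincide. For distinct $x,x'\in\bd\Om$ the prime ends $[G_k^x]$ and $[G_k^{x'}]$ are non-equivalent (distinct impressions). Since a homeomorphism carries the division relation in both directions — the inclusion $A\subset B$ is equivalent to $f(A)\subset f(B)$ — the chains $\{f(G_k^x)\}$ and $\{f(G_k^{x'})\}$ remain non-equivalent, so $F_{\rm P}(x)\neq F_{\rm P}(x')$. For surjectivity, $f$ already covers $D$; given $[E_n]\in\bdyP D=\bdySP D$ with $I[E_n]=\{z\}$, Part (2) of Lemma~\ref{lem-maps} yields that $[\fv(E_n)]$ is a singleton prime end in $\Om$, say with impression $\{x\}$. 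The uniqueness clause of Corollary~\ref{cor-loc-conn} forces $[\fv(E_n)]=[G_k^x]$, whence $F_{\rm P}(x)=[f(G_k^x)]=[f(\fv(E_n))]=[E_n]$.

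Continuity of $F_{\rm P}$ I would prove by mimicking the three-case scheme of Theorem~\ref{Thm1-homeo-ext}. If $x\in\Om$ and $x_n\to x$ in $d_X$, then $f(x)$ is interior, so a ball $B(f(x),r)\subset D$ and quasiconvexity of $Y$ let one join $f(x_n)$ to $f(x)$ by curves of vanishing diameter, giving $\distMa(f(x_n),f(x))\to0$. The crucial case is $x\in\bd\Om$ approached from inside: here Lemma~\ref{lem-conv} applied to the canonical chain gives $x_n\in G_k^x$ eventually, hence $f(x_n)\in f(G_k^x)$; since $[f(G_k^x)]$ is a singleton prime end, Remark~\ref{rem-Thm1-homeo-ext}(2) gives $\diam f(G_k^x)\to0$, and because $I[f(G_k^x)]\subset\overline{f(G_k^x)}$ one obtains the key bound $\distMst(f(x_n),[f(G_k^x)])=\distMa(f(x_n),I[f(G_k^x)])\le\diam f(G_k^x)$, which is made small by first fixing $k$ large and then letting $n\to\infty$. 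The remaining case, $x\in\bd\Om$ approached along $\bd\Om$, follows by approximating each boundary point by an interior point via the previous case and using the triangle inequality for $\distMst$, exactly as in Case 3 of Theorem~\ref{Thm1-homeo-ext}.

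Finally, $\ovOm$ is compact because $X$ is proper and $\Om$ is bounded, while $D\cup\bdyP D=\Om'\cup\bdySP D$ is compact by Theorem~\ref{thm-clOmm-cpt-new}; a continuous bijection from a compact space onto a Hausdorff space is automatically a homeomorphism, so $F_{\rm P}^{-1}$ is continuous and the argument closes. I expect the main obstacle to be precisely the boundary continuity estimate: one must pass from the metric convergence $x_n\to x$ in $d_X$ to convergence toward the prime end $[f(G_k^x)]$ measured in $\distMst$, and the only available bridge is Lemma~\ref{lem-conv} combined with the diameter decay $\diam f(G_k^x)\to0$ of the image chain. Care is also needed to confirm that $\distMst$ genuinely separates the images of distinct boundary points — i.e. that the non-equivalent prime ends of $D$ reached by $F_{\rm P}$ have distinct impressions, so that the target is Hausdorff and the compactness conclusion applies as stated.
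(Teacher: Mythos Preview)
Your treatment of well-definedness, injectivity, surjectivity, and continuity of $F_{\rm P}$ is essentially the paper's, modulo packaging: the paper cites Corollary~\ref{cor-homeo-ext} to get the continuous extension $F:\ovOm\to\ovD$ and reads continuity of $F_{\rm P}$ off that, whereas you rerun the three-case argument of Theorem~\ref{Thm1-homeo-ext}; and for injectivity the paper pulls a hypothetical common value $[E_k]$ back via Lemma~\ref{lem-maps}(2) rather than pushing forward the non-equivalence, but these are equivalent.

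The genuine divergence is your final step. You want to conclude continuity of $F_{\rm P}^{-1}$ from the ``continuous bijection from compact to Hausdorff'' principle. There are two problems. First, the compactness of $D\cup\bdyP D$ asserted by Theorem~\ref{thm-clOmm-cpt-new} refers to the prime-end topology of \cite[Section~8]{abbs}, not to the topology generated by $\distMst$; you would need to identify the two, which you do not do. Second, and more seriously, the Hausdorff property fails in general: for a domain $D$ that is finitely but not locally connected at the boundary (the slit disc is the standard example), there exist \emph{distinct} prime ends $[E_k]\neq[F_n]$ with identical singleton impressions $I[E_k]=I[F_n]=\{z\}$, and then by the paper's own definition $\distMst([E_k],[F_n])=\distMa(\{z\},\{z\})=0$. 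So $\distMst$ is only a pseudometric on $D\cup\bdyP D$, the induced topology is not Hausdorff, and your compactness shortcut does not close. You flag this worry in your last sentence, but it cannot be ``confirmed'' away: since $F_{\rm P}$ is onto, both such prime ends lie in the image, and their preimages are distinct points of $\bd\Om$ at positive $d_X$-distance.

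The paper avoids this by arguing directly: it fixes $[E_n]\in\bdyP D$ with $x=F_{\rm P}^{-1}([E_n])$, chooses a compact set $G\subset\ovOm\setminus F^{-1}(I[E_n])$ separating $x$ from $\bd B(x,\epsilon)\cap\ovOm$, uses property~\eqref{class-map-prop} to get $\distM(F(G),I[E_n])>0$, and takes $\delta$ accordingly. If you want to repair your route, you must either replace $\distMst$ by a genuine metric on $D\cup\bdyP D$ (e.g.\ the Mazurkiewicz \emph{completion} of $D$, in which distinct prime ends at the same boundary point are separated; cf.\ \cite{bbs1,bbs2}), or abandon the compactness shortcut and prove continuity of $F_{\rm P}^{-1}$ by hand as the paper does.
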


 Before proving this result let us provide some of its consequences and compare it to previous extension results.

\begin{cor}[cf.~Theorem 4.2 in N\"akki~\cite{na}]\label{Thm1-quasiconf-pext}
 Let $B^n\subset \R^n$ be a ball and $D\subset \R^n$ be collared. Then a quasiconformal homeomorphism $f:B^n \to D$ extends to a homeomorphism $F_{\rm P}: \overline{B^n} \to D\cup \bdyP D$.
\end{cor}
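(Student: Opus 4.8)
The plan is to read Corollary~\ref{Thm1-quasiconf-pext} as a direct specialization of Theorem~\ref{Thm1-homeo-pext} with $\Om=B^n$, so the entire task reduces to verifying the three standing hypotheses of that theorem for the present $\Om$, $D$ and $f$, and then invoking it verbatim. The three things to check are: that $B^n$ satisfies the bounded turning condition, that $D$ is finitely connected at the boundary, and that the quasiconformal $f$ actually lies in $\calF(B^n,D)$.

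First I would dispatch the bounded turning condition for $B^n$. Since the ball is convex, for any $x,y\in B^n$ the line segment $[x,y]$ is a continuum contained in $B^n$ with $\diam[x,y]=d(x,y)$, so $B^n$ is $1$-linearly connected in the sense of Definition~\ref{lin-con}; by Remark~\ref{rem-lin-con}(1) this is exactly the bounded turning condition. Next, the finite connectedness of $D$ at the boundary is immediate from the collaredness assumption via Example~\ref{ex-domains}(5), which records that collared Euclidean domains are finitely connected at the boundary (V\"ais\"al\"a~\cite[Theorem 17.10]{va1}).

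The only step with genuine content is membership $f\in\calF(B^n,D)$, and here I would simply cite Example~\ref{ex1}(a): it establishes precisely that a (quasi)conformal homeomorphism of a ball $B^n$ onto a collared domain $\Om'\subset\R^n$ belongs to $\calF(B^n,\Om')$. That example is where the work happens, through QC-flatness of $B^n$ (Definition~\ref{qc-flat}), the modulus inequality~\eqref{def-qc-mod}, and Part~(1) of Lemma~2.3 in N\"akki~\cite{na} for the collared target; but since it is already proved in the excerpt I may use it as a black box.

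With all hypotheses of Theorem~\ref{Thm1-homeo-pext} in place, the theorem applies directly and yields the homeomorphic extension $F_{\rm P}\colon \overline{B^n}\to D\cup\bdyP D$ defined by~\eqref{defn-prime-ext}, which is exactly the assertion of the corollary. I expect no independent obstacle in the corollary itself: the proof is essentially bookkeeping, and the apparent difficulty is entirely absorbed into Example~\ref{ex1}(a). The one point worth stating explicitly, rather than leaving implicit, is that the collaredness hypothesis is doing double duty here, since it is what simultaneously guarantees finite connectedness of $D$ at the boundary and underwrites the $\calF$-membership of $f$ through Example~\ref{ex1}(a).
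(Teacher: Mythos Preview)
Your proposal is correct and matches the paper's own proof essentially step for step: both verify the bounded turning condition for $B^n$, deduce finite (indeed local) connectedness at the boundary of $D$ from collaredness via V\"ais\"al\"a's Theorem~17.10, invoke Example~\ref{ex1}(a) for $f\in\calF(B^n,D)$, and then apply Theorem~\ref{Thm1-homeo-pext}. Your write-up is in fact slightly more explicit than the paper's, but the argument is the same.
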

\begin{proof}
 A ball $B^n\subset \R^n$ satisfies the bounded turning condition and if $D\subset \R^n$ is collared (see Definition~\ref{def-collar}), then it is locally connected at the boundary, see Theorem 17.10 in \cite{va1}. By the discussion in Example~\ref{ex1}(a) above, we get that $f\in \calF(B^n, D)$ and, thus Theorem~\ref{Thm1-homeo-pext} results in the assertion of the lemma.
\end{proof}

 Similarly, by Example~\ref{ex-qc-finit} and Theorem~\ref{Thm1-homeo-pext}, we show Corollary~\ref{Thm1-quasiconf-pext-finit} below. There we retrieve the extension result for quasiconformal mappings discussed in Section 3.1 in V\"ais\"al\"a~\cite{va2}. The prime end boundary studied in \cite{va2} is defined via accessible curves and V\"ais\"al\"a's proof is based on the property that the impression of every singleton prime end in finitely connected at the boundary domains is accessible, cf. Theorem~\ref{thm-fin-con-homeo} and Lemma~\ref{lem-aux} for the relation between prime ends and accessibility.

 In particular, if $f$ is conformal, $n=2$ and $D$ is simply-connected, then we obtain a counterpart of the celebrated Carath\'eodory extension theorem, see \cite{car1}. However, our result differs from Carath\'eodory's theorem as we use a different prime ends theory than in \cite{car1}.

\begin{cor}[cf. Section 3.1 in~\cite{va2}]\label{Thm1-quasiconf-pext-finit}
 Let $B^n\subset \R^n$ be a ball and $D\subset \R^n$ be a domain finitely connected at the boundary. Then a quasiconformal homeomorphism $f:B^n \to D$ extends to a homeomorphism $F_{\rm P}: \overline{B^n} \to D\cup \bdyP D$.
\end{cor}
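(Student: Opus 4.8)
The plan is to reduce the statement to Theorem~\ref{Thm1-homeo-pext} by checking that its two hypotheses hold for the specific source $\Om = B^n$ and the given target $D$. That theorem requires (i) that the source domain satisfy the bounded turning condition and (ii) that the target domain be finitely connected at the boundary, and it then delivers a homeomorphic extension $F_{\rm P}: \overline{B^n} \to D \cup \bdyP D$ continuous with respect to the Mazurkiewicz distance $\distMst$, given explicitly by formula~\eqref{defn-prime-ext}. Hypothesis (ii) is granted outright in the statement of the corollary, so the work reduces to verifying hypothesis (i) and, crucially, to establishing that $f$ belongs to the class $\calF(B^n, D)$, which is the standing assumption of Theorem~\ref{Thm1-homeo-pext}.

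First I would observe that the ball $B^n$ satisfies the bounded turning condition. Since $B^n$ is convex, any two points $x, y \in B^n$ are joined by the line segment $[x,y] \subset B^n$, a continuum of diameter $d(x,y)$; hence $B^n$ is $1$-linearly connected in the sense of Definition~\ref{lin-con}, which by Remark~\ref{rem-lin-con}(1) is precisely the bounded turning condition. This step is routine.

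Second, and this is where the genuine content lies, I would invoke Example~\ref{ex-qc-finit}, which asserts exactly that every quasiconformal map from $B^n$ onto a bounded domain $D \subset \R^n$ finitely connected at the boundary belongs to $\calF(B^n, D)$. The membership $f \in \calF(B^n, D)$ is the nontrivial ingredient: it packages the modulus estimate of Theorem 7.1 in \cite{va1} together with the QC-flatness of the ball and the finite connectivity of $D$ to verify condition~\eqref{class-map-prop}. As this is already proved in the excerpt, I would simply cite it rather than reprove it.

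With both hypotheses in hand, Theorem~\ref{Thm1-homeo-pext} applies verbatim and yields the desired homeomorphic extension $F_{\rm P}$. The only obstacle worth flagging is purely one of hypothesis-matching: one must confirm that the ``bounded turning'' requirement on the source and the ``finitely connected at the boundary'' requirement on the target are met by $B^n$ and by $D$, respectively, so that Example~\ref{ex-qc-finit} and Theorem~\ref{Thm1-homeo-pext} chain together cleanly. No further analysis is needed, and the argument mirrors that of Corollary~\ref{Thm1-quasiconf-pext}, with Example~\ref{ex-qc-finit} playing the role that the collaredness argument of Example~\ref{ex1}(a) plays there.
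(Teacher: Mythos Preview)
Your proposal is correct and follows essentially the same approach as the paper, which simply states that the corollary follows from Example~\ref{ex-qc-finit} and Theorem~\ref{Thm1-homeo-pext}. You have merely made explicit the trivial verification that $B^n$ satisfies the bounded turning condition, which the paper leaves implicit.
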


Let us also add that in the setting of quasiconformal mappings in domains in the Heisenberg group $\mathbb{H}_{1}$ the result corresponding to Theorem~\ref{Thm1-homeo-pext} has been recently proven in \cite[Theorem 3.7]{aw2}. There, we assume that $\Om$ is collared and the theory of prime ends studied in \cite{aw2} differs from the one above.

Finally, motivated by a slit-ball domain, see Example~\ref{ex-notQC}, Theorem 17.22 and Example 17.23 in \cite{va1} we provide an example of non-quasiconformal homeomorphism which satisfies the assertion of Theorem~\ref{Thm1-homeo-pext}.
\begin{cor}
  Let $B^n\subset \R^n$ be a ball and $D\subset \R^n$ be a a slit-ball, for $n>2$. Then a homeomorphism $f \in \calF(B^n, D)$ extends to a homeomorphism $F_{\rm P}: \overline{B^n} \to D\cup \bdyP D$.
\end{cor}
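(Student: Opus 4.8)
The plan is to read off the statement as a direct instance of Theorem~\ref{Thm1-homeo-pext}, whose hypotheses are that the source domain satisfies the bounded turning condition, the target domain is finitely connected at the boundary, and $f\in\calF$ of the two. The last of these is part of the hypothesis here. So the whole task reduces to verifying the two geometric conditions on $B^n$ and on the slit-ball $D$, and then invoking the theorem; there is essentially no new analytic content, only a check of the geometry.

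First I would record that $B^n$ satisfies the bounded turning condition. Since $B^n$ is convex, for any $x,y\in B^n$ the straight segment $K=[x,y]$ is a continuum contained in $B^n$ with $\diam K=d(x,y)$, so $B^n$ is $1$-linearly connected in the sense of Definition~\ref{lin-con}, which by Remark~\ref{rem-lin-con}(1) is exactly the bounded turning condition. Next I would verify that $D$ is finitely connected at the boundary. As already observed in the Example following Corollary~\ref{cor-homeo-ext} (and for the construction, Example~\ref{ex-notQC}), the slit-ball is locally connected at the boundary: the removed slit $\{(x,0,0):0\le x<1\}$ is a $1$-dimensional set, hence of codimension at least two when $n>2$, so for any $x\in\bd D$ (whether on the sphere $\bd B^n$ or on the slit) and any small ball $B(x,r)$ the set $B(x,r)\cap D$ remains connected, because one can always route around the slit. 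Local connectedness at the boundary is precisely the single-component special case of Definition~\ref{def-fin-con}, so $D$ is finitely connected at the boundary.

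With both conditions in hand, Theorem~\ref{Thm1-homeo-pext} applies verbatim: the homeomorphism $f\in\calF(B^n,D)$ extends to a homeomorphism $F_{\rm P}:\overline{B^n}\to D\cup\bdyP D$, continuous with respect to $\distMst$, given on $\bd B^n$ by $x\mapsto[f(G_k^x)]$ for the canonical prime end $[G_k^x]$ of Corollary~\ref{cor-loc-conn}. The only point requiring any care is the local connectedness of $D$ near the slit, and this is exactly where the hypothesis $n>2$ enters, ensuring the slit does not locally separate $D$; for $n=2$ the slit is of codimension one and the conclusion would fail, which is consistent with the planar slit-disc serving as the classical counterexample to homeomorphic extension. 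Since this geometric fact was already established for the slit-ball in the earlier example, the corollary follows with no further work.
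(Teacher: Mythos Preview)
Your proposal is correct and matches the paper's approach: the corollary is stated in the paper without proof, as an immediate application of Theorem~\ref{Thm1-homeo-pext} once one notes (as in Example~\ref{ex-notQC} and the example following Corollary~\ref{cor-homeo-ext}) that the slit-ball for $n>2$ is locally, hence finitely, connected at the boundary. Your explicit verification that $B^n$ is $1$-linearly connected and that the codimension-two slit does not locally separate $D$ simply spells out what the paper leaves to the reader.
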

\begin{proof}[Proof of Theorem~\ref{Thm1-homeo-pext}]
 By assumptions and Corollary~\ref{cor-homeo-ext} we obtain that a homeomorphism $f\in \calF(\Om, D)$ extends continuously to $F: \ovOm \to \ovD$.

 Lemma~\ref{lem-maps} implies that prime ends in $\bdyP \Om$ correspond to prime ends in $\bdyP D$ under a mapping in $\calF(\Om, D)$ and opposite. This observation allows us to define
  \[
   F_{\rm P}: \ovOm \to D\cup \bdyP D,
  \]
   a unique bijective extension of $f$. Namely, let $x\in \bd \Om$ and $[G_k^x]$ be a unique prime end in $\bdySP \Om$ given by Corollary~\ref{cor-loc-conn} such that $I[G_k^x]=\{x\}$. We define $F_{\rm P}$ as in \eqref{defn-prime-ext} above.
 Mapping $F_{\rm P}$ is well-defined by the definition of prime ends. The injectivity of $F_{\rm P}$ follows from the injectivity of $f$ and the following argument:

 Suppose there exist $x\not=y \in \bd \Om$ such that
 \[
  F_{\rm P}(x)=F_{\rm P}(y)=[E_k]\in \bdyP D.
 \]
  Then by Lemma~\ref{lem-maps}(2) we have that $[\fv(E_k)]\in \bdyP \Om$ and divides, and hence is equivalent to, both canonical prime ends $[G_k^x]$ and $[G_k^y]$. In a consequence $[G_k^x]=[G_k^y]$ and $x=y$ by Theorem~\ref{thm-fin-con-homeo} and Corollary~\ref{cor-loc-conn}.

  In order to show that $F_{\rm P}$ is onto let $[E_n]\in \bdySP D$ and note that $\bdySP D=\bdyP D$, as $D$ is finitely connected at the boundary. Hence, by Part 2 of Lemma 3, $[\fv(E_k)]$ is a singleton prime end in $\Om$ and divides a canonical prime end $[G^x_k]$ for some $x\in \bd \Om$. Thus, $[\fv(E_k)]=[G^x_k]$ by the definition of prime ends (the minimality property with respect to division of ends). In particular, $F^{-1}_{\rm P}$ exists and is well-defined.

 Next, we show the continuity of $F_{\rm P}$. Let $[E_n]\in \bdyP D$ and set
 \[
  x:=F^{-1}_{\rm P}([E_n])\, \hbox{ with }\, x\in \bd \Om.
  \]
   Let $\epsilon>0$. Since $F$ is continuous map from $\Om$ to $D$ with respect to metrics $d_X$ and $d_Y$, respectively, then there exist $\delta=\delta(\epsilon)>0$ and a set
 \[
 U:=\ovOm\cap B(x, \delta)\quad\hbox{ such that }\quad F(U)\subset B(I[E_n], \epsilon)\cap D.
 \]
 Then
 \[
  \distMst(F(y), I[E_n])<\epsilon\quad\hbox{  for all }\, y\in U
 \]
  and hence, $F_{\rm P}$ is continuous at $x$ in metric $\distMst$.

 Finally, we show that $F^{-1}_{\rm P}$ is continuous with respect to metrics $\distMst$ in $D$ and $d_X$ in $\Om$. Then, the proof that $F_{\rm P}$ is a homeomorphism will be completed.

 For a set $U$ defined as above we choose a compact set
 \[
 G\subset \ovOm\setminus F^{-1}(I[E_n])
 \]
 separating $x$ and $\bd B(x, \epsilon)\cap \ovOm$ in $\ovOm$. That such a compact set exists follows from the fact that $\bd B(x, \epsilon)\cap \ovOm$ is a closed subset of a compact domain $\ovOm$, and thus compact as well. By taking into account that $\{x\}$ is compact and that the metric spaces $X$ and $Y$ are Hausdorff, the existence of set $G$ is proved. Since $d_X(G, x)>0$, then $\distM(G, x)>0$ by inequalities \eqref{metric-rel}. Next, observe that by the above presentation and the definition of the Mazurkiewicz distance, cf. Preliminaries, we have that
 \[
  \distMa(F(y), I[E_n])=\inf \diam\,K \leq \diam \ga_{F(y), I[E_n]}\leq \epsilon,
 \]
 where $\ga_{F(y), I[E_n]}$ denotes a curve joining $F(y)$ and $I[E_n]$ in $U$ whose existence follows from quasiconvexity of space $Y$, see Assumptions (A) in Section~\ref{sect-map-class} and Remark~\ref{rmk-locconn}. This observation together with \eqref{metric-rel} imply that
 \[
   \distMa(F(y), I[E_n])\approx  d(F(y), I[E_n]).
 \]

 Therefore, $\distM(F(G), I[E_n])>0$ as $F$ is a homeomorphism and $F|_{\Om}=f$ belongs to class $\calF(\Om, D)$. Finally, let us choose
 \[
  \delta:=\distMa(F(G), I[E_n])<\distM(F(G), I[E_n]).
 \]
 Let now $[F_k]\in \bdyP D$ be such that
 \[
 \distMst([E_n], [F_k])=\distMa(I[E_n], I[F_k])\leq \distM(I[E_n], I[F_k])<\delta.
 \]
 Then, $F_{\rm P}([F_k])\in U$ and so, $F^{-1}_{\rm P}$ is continuous in $\distMst$.
\end{proof}
%
%
\section{Applications}\label{sect-appl}

 This section is devoted to some applications of extension results and prime ends. First, we discuss and prove a variant of the Koebe theorem on arcwise limits along end-cuts for mappings in class $\calF$. In the second part, we relate the prime end boundary as defined in Section~\ref{sec-pet} and the Royden boundary. The latter one type of the boundary arises naturally in the extension problems, see Theorem~\ref{thm51-sod}.

\subsection{The Koebe theorem}\label{sect-koebe}

In 1915 Koebe~\cite{ko} proved that a conformal mapping from a
simply-connected planar domain $\Om$ onto the unit disc has arcwise
limits along all end-cuts of $\Om$. The purpose of this section is
to show a counterpart of Koebe's result for mappings in the class
$\calF$ in metric spaces. First, we need to introduce two auxiliary
definitions.


 In the Euclidean setting the following definition appears in N\"akki~\cite{na1, na2} as an uniform domain. However, in order to avoid confusion with uniform domains studied above and by e.g. V\"ais\"al\"a~\cite{vaisala88} (cf. Definition 11.1 in \cite{abbs}) we shall use a term mod-uniform domains instead. See also \cite[Definition 4.1]{aw2} for mod-uniform domains considered in the setting of the Heisenberg group $\mathbb{H}_1$.
\begin{definition}\label{def-uni-dom}
 We say that a domain $\Om \subset X$ is \emph{mod-uniform} if  for every $t>0$ there is a $\epsilon>0$ such that if $$
 \min\{\diam(E), \diam(F)\}\ge t,\quad \hbox{then }\quad \Mod_{Q}(E, F, \Om)\geq \epsilon.
 $$
\end{definition}

In order to illustrate the definition, let us recall that by
Herron~\cite[Fact 2.12]{herCam} a uniform subdomain in a locally
compact $Q$-regular $Q$-Loewner space is mod-uniform.

 Next, we refine the definition of a cluster set and include the behavior of a mapping along an end-cut, cf. \eqref{def-cluster0} and Definition~\ref{deff-access-pt}.

\begin{definition}\label{def-cluster}
 Let $\Om\subset X$ be a domain, $f:\Om\to Y$ be a mapping and $x\in \bd \Om$. We say that a sequence of points $\{x_n\}_{n=1}^{\infty}$ in $\Om$ \emph{converges along an end-cut $\ga$ at $x$} if there exists a sequence $\{t_n\}_{n=1}^{\infty}$ with $0<t_n<1$ such that $\lim_{n\to\infty} t_n=1$ and $x_n=\ga(t_n)$ that
 \[
 \lim_{n\to \infty}d_X(x_n, x)=0.
 \]
 We say that a point $x'\in X$ belongs to \emph{the cluster set of $f$ at $x$ along an end-cut $\ga$ from $x$} denoted by $C_{\ga}(f, x)$, if there exists a sequence of points $\{x_n\}_{n=1}^{\infty}$ converging along $\ga$ at $x$, such that
 \begin{equation*}
 \lim_{n\to\infty} d_Y(f(x_n), x')=0.
 \end{equation*}

 If $C_{\ga}(f, x)=\{y\}$, then $y$ is called an \emph{arcwise limit(asymptotic value)} of $f$ at $x$.
\end{definition}

The following result extends Koebe's theorem and Theorem 7.2 in N\"akki~\cite{na} to the setting of mappings in $\calF$. Moreover, we study more general end-cuts than in \cite{na}.

\begin{theorem}[The Koebe theorem in metric spaces]\label{thm-koebe}
 Let $f\in \calF(\Om, D)$ be a homeomorphism between a domain $\Om\subset X$ satisfying the bounded turning condition and a domain $D \subset Y$ finitely connected at the boundary. Then $f$ has arcwise limits along all end-cuts of $\Om$.
\end{theorem}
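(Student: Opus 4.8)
The plan is to reduce the existence of an arcwise limit to the prime-end construction of Lemma~\ref{lem-aux} and the transport properties of Lemma~\ref{lem-maps}. Fix a boundary point $x\in\bd\Om$ and an end-cut $\ga\colon[0,1]\to X$ with $\ga(1)=x$ and $\ga([0,1))\subset\Om$ (cf. Definition~\ref{deff-access-pt}). Since $\Om$ satisfies the bounded turning condition, it is locally connected at the boundary by Remark~\ref{rem-lin-con}(3), so the hypotheses of Lemma~\ref{lem-maps} are met. First I would invoke Lemma~\ref{lem-aux} with an arbitrary strictly decreasing sequence $r_k\to 0$: this produces parameters $t_k\in(0,1)$ and a prime end $[E_k]$ of $\Om$ with $I[E_k]=\{x\}$, each $E_k$ a component of $\Om\cap B(x,r_k)$, and---crucially for tracking $\ga$---with $\ga([t_k,1))\subset E_k$ for every $k$.

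Next I would transport this prime end through $f$. Because $f\in\calF(\Om,D)$, the domain $\Om$ is locally connected at the boundary, and $D$ is finitely connected at the boundary, Part~(1) of Lemma~\ref{lem-maps} shows that $\{f(E_k)\}_{k=1}^{\infty}$ is a chain in $D$, while the ``in particular'' clause of Part~(3) shows that $[f(E_k)]$ is a \emph{singleton} prime end. Writing $I[f(E_k)]=\{y\}$ with $y\in\bd D$, this single point $y$ is the candidate arcwise limit. Since $\{\overline{f(E_k)}\}_{k=1}^{\infty}$ is a decreasing sequence of continua whose intersection is $\{y\}$, their diameters decrease to $\diam\{y\}=0$; equivalently one may cite Remark~\ref{rem-Thm1-homeo-ext}(2) to conclude $\diam f(E_k)\to 0$.

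Finally I would close the argument. Given $\epsilon>0$, choose $k$ with $\diam f(E_k)<\epsilon$. For every $t\in[t_k,1)$ the inclusion $\ga(t)\in\ga([t_k,1))\subset E_k$ gives $f(\ga(t))\in f(E_k)$, and since $y\in\overline{f(E_k)}$ we obtain $d_Y(f(\ga(t)),y)\le\diam f(E_k)<\epsilon$. Hence $f(\ga(t))\to y$ as $t\to 1^{-}$, so every sequence $x_n=\ga(t_n)$ with $t_n\to 1$ satisfies $f(x_n)\to y$. By Definition~\ref{def-cluster} this means $C_{\ga}(f,x)=\{y\}$, i.e. $f$ has the arcwise limit $y$ along $\ga$, which is the assertion.

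I expect the main obstacle to lie in the second step: the whole reduction succeeds only because the image prime end is a \emph{singleton}, and this is precisely where the finite connectedness of $D$ at the boundary is indispensable, since it feeds into Theorem~\ref{thm-fin-con-homeo}, on which Part~(3) of Lemma~\ref{lem-maps} rests. Were it to fail, $[f(E_k)]$ could carry a nondegenerate impression and the cluster set along $\ga$ would merely be contained in that impression rather than collapse to a point. A secondary point that must be checked carefully is that it is the tail containment $\ga([t_k,1))\subset E_k$ furnished by Lemma~\ref{lem-aux}---and not just convergence of one prescribed sequence of parameters---that makes the limit genuinely \emph{arcwise}, valid for all approaches along $\ga$.
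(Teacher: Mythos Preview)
Your proof is correct and follows the same overall strategy as the paper: build a prime end around the end-cut via Lemma~\ref{lem-aux}, push it forward via Lemma~\ref{lem-maps}(3) to get a singleton prime end in $D$, and read off the arcwise limit from the shrinking of the image chain.

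The only real difference is in the last step. The paper identifies the limit by invoking the continuous extension $F:\overline{\Om}\to\overline{D}$ supplied by Corollary~\ref{cor-homeo-ext}, and then argues $f(x_n)\to F(x)$. You instead work directly with the tail containment $\ga([t_k,1))\subset E_k$ from Lemma~\ref{lem-aux} together with $\diam f(E_k)\to 0$ (Remark~\ref{rem-Thm1-homeo-ext}(2)). Your route is slightly more self-contained: it bypasses the extension theorem entirely and makes the role of the tail inclusion explicit. One small correction to your closing reflection: in your argument the finite connectedness of $D$ is not what drives Part~(3) of Lemma~\ref{lem-maps}---that part only uses local connectedness of $\Om$ (via Theorem~\ref{thm-fin-con-homeo} applied to $\Om$) and the $\calF$-property. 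In the paper's version the hypothesis on $D$ is genuinely used, because Corollary~\ref{cor-homeo-ext} requires it; in your direct argument that hypothesis happens not to be invoked.
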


\begin{rem}
 A variant of the Koebe theorem for quasiconformal mappings between domains in the Heisenberg group $\mathbb{H}_1$
 such that one domain is finitely connected at the boundary and the target domain is mod-uniform is proved in \cite{aw2}. As observed by N\"akki, mod-uniform domains in $\overline{R}^n$ are finitely connected at the boundary, see Theorem 6.4 in \cite{na3}.  However, a domain $\Om\subset \overline{R}^n$ finitely connected at the boundary is mod-uniform if and only if $\Om$ can be mapped quasiconformally onto a collared domain, see \cite[Section 6.5]{na3}.
\end{rem}

\begin{proof}[Proof of Theorem~\ref{thm-koebe}]
  Since by Remark~\ref{rem-lin-con}(3) we have that $\Om$ is locally connected at the boundary, then Theorem~\ref{thm-fin-con-homeo} implies that every point in the boundary $\bd \Om$ is accessible. Let $x\in \bd \Om$ and suppose that $\ga$ is an end-cut in $\Om$ from $x\in \Om$ (cf. Definition~\ref{deff-access-pt}). By Lemma~\ref{lem-aux} there exists a singleton prime end $[E_k]$ such that $x$ is accessible through $[E_k]$ (cf. the statement following Lemma~\ref{lem-aux}). Let
  \[
   x_n:=\ga(t_n) \quad\hbox{ for } n=1,2,\ldots
  \]
  and some $t_n\to 1$ as $n\to \infty$ be any sequence of points converging to $x$ in $d_X$. Since every $x_n$ belongs to some acceptable set $E_{k(n)}$ in $[E_n]$ and by Part 3 of Lemma~\ref{lem-maps} it holds that $[f(E_{k(n)})]$ is a singleton prime end in $D$, we obtain a corresponding sequence of points $f(x_n)$ in $D$ for $n=1, 2, \ldots$ which converges to
  \[
  I[f(E_{k(n)})]=x'\in \bd D.
  \]
  Moreover, Theorem~\ref{thm-fin-con-homeo} together with Theorem~\ref{Thm1-homeo-ext} (Corollary~\ref{cor-homeo-ext}) give us that $F$, a continuous extension of $f$ to $\overline{\Om}$, exists and it holds that
  \[
   \lim_{n\to \infty} d_Y(f(x_n), F(x))=0,\quad \hbox{ and } F(x)=x'.
  \]
 Since every prime end in $D$ is a singleton prime end, therefore it
holds that the cluster set of $f$ at $x$ along an end-cut $\ga$ from
$x$ consists of a single point only, namely $C_{\ga}(f, x)=\{x'\}$.
By Definition~\ref{def-cluster}, this implies that $f$ has an
arcwise limit at $x$.
 \end{proof}

 By narrowing the class of mappings to the quasiconformal ones (cf. Definition~\ref{def-qc}) we may weaken assumptions in Theorem~\ref{thm-koebe} on $\Om$ by the price of strengthening slightly requirements on the target domain. In a consequence we obtain the following result.

 \begin{cor}
  Let $X$ be a path-connected doubling metric space. Let $f:\Om\to D$ be a quasiconformal map between a domain $\Om\subset X$ finitely connected at the boundary and a mod-uniform domain $D\subset X$. Then $f$ has arcwise limits along all end-cuts of $\Om$.
 \end{cor}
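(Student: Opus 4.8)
The plan is to reduce the statement to a shrinking property for the images of the tails of the end-cut, and then to derive a contradiction from the interplay between mod-uniformity in the target and the smallness of the curve-modulus of thin spherical rings in the source. Fix an end-cut $\ga:[0,1]\to X$ with $\ga([0,1))\subset\Om$ and $\ga(1)=x\in\bd\Om$; such end-cuts exist at every boundary point because finite connectedness of $\Om$ at the boundary makes each $x\in\bd\Om$ accessible by Theorem~\ref{thm-fin-con-homeo}. The quantity $t\mapsto\diam f(\ga([t,1)))$ is nonincreasing, hence converges to some $a\ge 0$. I claim it suffices to prove $a=0$: in that case, for every $\varepsilon>0$ there is $t_\varepsilon<1$ with $\diam f(\ga([t_\varepsilon,1)))<\varepsilon$, so for any sequence $x_n=\ga(t_n)$ with $t_n\to 1$ the sequence $(f(x_n))$ is Cauchy and therefore converges by completeness of $X$; the limit is independent of the chosen sequence and must lie in $\bd D$, since otherwise $\fv$ would pull it back into $\Om$ and force $\ga(t_n)$ to stay away from $x$. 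This is exactly the arcwise limit of Definition~\ref{def-cluster}.

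Suppose then, toward a contradiction, that $a>0$. Since $\diam f(\ga([t,1)))\ge a$ for every $t<1$, the image curve oscillates by at least $a/2$ arbitrarily late, so I can pick parameters approaching $1$ and subarcs $\sigma_k:=\ga([u_k,v_k])$ with $\diam f(\sigma_k)>a/2$ and $\sigma_k\subset\ga([t_k,1))$ for some $t_k\to 1$; because $\ga(t)\to x$, the $\sigma_k$ are eventually contained in balls $B(x,r_k)$ with $r_k\downarrow 0$. Fix in addition a nondegenerate continuum $E_0:=\ga([0,t_0])\subset\Om$ at positive distance $d_0:=\dist(E_0,x)>0$ from $x$, so that $\diam f(E_0)>0$ by injectivity of $f$. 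Applying mod-uniformity of $D$ to the two continua $f(E_0)$ and $f(\sigma_k)$, whose diameters are bounded below by the fixed number $\min\{\diam f(E_0),a/2\}$, yields a constant $\varepsilon_0>0$ with $\modq(f(E_0),f(\sigma_k),D)\ge\varepsilon_0$ for all $k$. Transferring this lower bound to $\Om$ requires some care (see below), but applying the modulus inequality \eqref{def-qc-mod} to $\fv$ together with monotonicity of the modulus gives the uniform lower bound $\modq(E_0,\sigma_k,\Om)\ge\varepsilon_0/K_f>0$.

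On the other hand, every curve in $\Gamma(E_0,\sigma_k,\Om)$ joins the set $\{d(\cdot,x)\ge d_0\}$ to $\sigma_k\subset B(x,r_k)$ and hence contains a subcurve crossing the ring $\{r_k\le d(\cdot,x)\le d_0\}$. The logarithmic test function $\varrho_k(y)=\big(d(y,x)\log(d_0/r_k)\big)^{-1}$ supported on this ring is admissible for $\Gamma(E_0,\sigma_k,\Om)$ (the upper-gradient inequality $\int_\gamma d(\cdot,x)^{-1}\,ds\ge\log(d_0/r_k)$ holds for ring-crossing curves), and $Q$-regularity of $X$, which is implicit in Definition~\ref{def-qc}, gives $\int_X\varrho_k^{\,Q}\,d\mu\le C\big(\log(d_0/r_k)\big)^{1-Q}\to 0$ as $k\to\infty$, using $Q>1$. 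Thus $\modq(E_0,\sigma_k,\Om)\to 0$, contradicting the uniform lower bound $\varepsilon_0/K_f$. This forces $a=0$ and completes the argument.

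The main obstacle is twofold. First, the ring-modulus upper estimate is the only place where the geometry of the space really enters, and it relies on $X$ being complete and $Q$-regular rather than merely doubling, together with the standard bound $\int_{\{r\le d(\cdot,x)\le R\}}d(\cdot,x)^{-Q}\,d\mu\lesssim\log(R/r)$; I expect verifying these estimates in the required generality to be the technical heart. Second, one must transfer the modulus lower bound in the correct direction: the family $\Gamma(f(E_0),f(\sigma_k),D)$ is not literally the forward image $f\,\Gamma(E_0,\sigma_k,\Om)$ (homeomorphic images of rectifiable curves need not be rectifiable), so the comparison must be obtained by applying \eqref{def-qc-mod} to $\fv$ and using that $\fv\big(\Gamma(f(E_0),f(\sigma_k),D)\big)$ is a subfamily of $\Gamma(E_0,\sigma_k,\Om)$. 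Finally, it is essential that mod-uniformity be invoked with both arguments genuine continua of diameter bounded away from zero, which is why $f(\sigma_k)$ is extracted from an honest oscillation of amplitude $a/2$ rather than from a tail of diameter $a$.
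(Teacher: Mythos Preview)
The paper omits its own proof, referring instead to the analogous argument for quasiconformal maps in the Heisenberg group (Theorem~5.1 in \cite{aw2}); your direct modulus argument---playing the mod-uniform lower bound on $\modq(f(E_0),f(\sigma_k),D)$ against the logarithmic ring upper bound on $\modq(E_0,\sigma_k,\Om)$---is precisely the classical N\"akki-type proof that such a reference points to, and it is correct. The two technical caveats you isolate (that $Q$-regularity is implicit in Definition~\ref{def-qc} and that the modulus transfer must go through $\fv$) are both handled properly.
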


 The proof closely follows steps of the corresponding proof of the
Koebe theorem for quasiconformal mappings in the Heisenberg group,
see Theorem 5.1 in \cite{aw2}. Hence, we omit the proof.

\subsection{Prime ends and Royden boundaries}\label{sect-royden}

 In this section we relate our results on prime ends with homeomorphic extensions of quasiconformal mappings and the theory of Royden boundaries. Let us briefly set up the stage for our considerations and recall necessary notions from the theory of Royden algebras and compactifications. Our presentation is based on a work by Soderberg~\cite{sod}.

 In what follows let $\Om,\Om'$ be domains in $\R^n$ for $n\geq 2$. We define a \emph{Royden algebra on $\Om$} and denote $\mathcal{A}(\Om)$, as an algebra of all bounded continuous functions $u:\Om\to\R$ with pointwise addition and multiplication such that first order weak partial derivatives of $u$ exist and belong to $L^1(\Om)$. The norm of $u\in \mathcal{A}(\Om)$ is defined as follows:
  \[
  \|u\|_{\Om}:=\|u\|_{L^{\infty}(\Om)}+\|\nabla u\|_{L^{n}(\Om)}.
  \]
  Such an algebra is a commutative regular Banach algebra which separates points in $\Om$ and is inverse-closed (see Preliminaries in \cite{sod} for further details).

 It turns out that there is a correspondence between quasiconformal mappings and Royden algebras isomorphisms. Namely, a quasiconformal map $f:\Om\to\Om'$ defines an algebra isomorphism $f^*:\mathcal{A}(\Om')\to \mathcal{A}(\Om)$ by the formula
 \[
  f^*v:=v\circ f\quad \hbox{for any }\, v\in \mathcal{A}(\Om').
 \]
  Furthermore, an algebra isomorphism between $\mathcal{A}(\Om')$ and $\mathcal{A}(\Om)$ induces a quasiconformal map $f:\Om\to\Om'$, cf. \cite[Theorem 1.1]{sod}. Similarly to the setting of prime end boundaries (see Theorem~\ref{Thm1-homeo-pext}), quasiconformal mappings give rise also to maps between an ideal type of boundary defined via Royden algebras. Namely, a collection of all non-zero, bounded linear homomorphisms $\chi: \mathcal{A}(\Om)\to \R$ is called a \emph{Royden compactification $\Om^{*}$}. Thus, $\Om^{*}\subset \mathcal{A}(\Om)'$ a dual space of $\mathcal{A}(\Om)$. Moreover, $\Om^{*}$ is a compact Hausdorff space in the relative weak$^*$ topology generated by $\mathcal{A}(\Om)$.
 Points $x\in \Om$ can be identified with a subset of $\Om^*$ denoted $\hat{\Om}$ via point evaluation homomorphisms
 \[
  \hat{x}(u)=u(x)\quad \hbox{ for any }u\in \mathcal{A}(\Om).
 \]
 Hence, the identification $x\to \hat{x}$ defines a homeomorphic embedding of $\Om$ onto the image $\hat{\Om}\subset \Om^{*}$. We define \emph{the Royden ideal boundary} of $\Om$ as follows, cf. \cite{sod}:
 \begin{equation*}
  \Delta=\Delta_{\Om}:=\Om^{*}\setminus \hat{\Om}.
 \end{equation*}
 Let $T=f^*$ be the Royden algebra isomorphism defined above for a given quasiconformal map $f$. One defines an adjoint operator $T^*:\Om^*\to (\Om')^*$ by the formula
  \[
  T^*\chi=\chi\circ T\quad \hbox{ for any }\quad \chi\in \Om^*
  \]
 and show the following result (cf. \cite[Theorem 2.3]{sod}):
\emph{Operator $T^*$ is a homeomorphic extension of $f$ to the
Royden compactification $\Om^*$ with the property that $T^*$ maps
the Royden ideal boundary of $\Om$ onto the corresponding Royden
ideal boundary of $\Om'$.} Related is the following notion crucial
for the further discussion. Let $x\in \overline{\Om}$, then we
denote by $\Phi_x\subset \Om^{*}$ a \emph{fiber over $x$}, i.e.
 \begin{equation*}
  \Phi_x:=
  \begin{cases}
  &\chi=\hat{x},\qquad x\in \Om\\
  &\chi \hbox{ corresponding to a Royden net converging to }x,\qquad x\in \bd \Om.
  \end{cases}
 \end{equation*}
 We refer to Sections 3 and 4 in \cite{sod} for definitions and properties of Royden nets, in particular see \cite[Definition 3.3]{sod}. It turns out that $\Phi_x$ is a compact subset of $\Delta$, also that different boundary points have distinct fibers. Moreover, $\Delta=\bigcup_{x\in \bd \Om}\Phi_x$.

 The following result connects the topic of our work to Royden algebras and the Royden compactification.

 \begin{theorem}[Theorem 5.1 in \cite{sod}]\label{thm51-sod}
   Let $f:\Om\to\Om'$ be a quasiconformal map and $T=f^*:\mathcal{A}(\Om')\to \mathcal{A}(\Om)$ be the corresponding Royden algebras isomorphism. A homeomorphic extension of $f$, denoted $F:\overline{\Om}\to\overline{\Om'}$, exists if and only if for every $x\in \bd \Om$ there is a $y\in \bd \Om'$ such that $T^*(\Phi_x)=\Phi_y$. Moreover, it holds that $F(x)=y$.
 \end{theorem}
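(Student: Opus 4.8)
The plan is to factor the whole argument through the canonical projection that collapses each fiber to its base point, so that the equivalence reduces to a formal statement about quotient maps. First I would record the two ingredients that are already available. By the homeomorphic extension property of $T^*$ recalled above (cf.~\cite[Theorem 2.3]{sod}), $T^*\colon\Om^*\to(\Om')^*$ is a weak$^*$-homeomorphism carrying $\Delta_\Om$ onto $\Delta_{\Om'}$, and it satisfies $T^*\hat{x}=\widehat{f(x)}$ for every $x\in\Om$, directly from $T^*\hat{x}(v)=\hat{x}(Tv)=(v\circ f)(x)=v(f(x))$. Secondly, because $\Om$ is a bounded domain the Royden compactification dominates the Euclidean closure: the inclusion $\Om\hookrightarrow\ovOm$ extends to a continuous surjection $p_\Om\colon\Om^*\to\ovOm$ that is the identity on $\Om$ and whose fiber over each $x\in\bd\Om$ is precisely $\Phi_x$ (the extension exists because the bounded coordinate functions have $L^n$-gradients, hence lie in $\mathcal{A}(\Om)$, and separate points of $\ovOm$). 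This is exactly what the structural facts quoted above encode---each $\Phi_x$ compact, distinct boundary points giving distinct fibers, and $\Delta=\bigcup_{x\in\bd\Om}\Phi_x$---and I would treat $p_\Om$, together with its counterpart $p_{\Om'}$, as the organizing object. Since $p_\Om$ is a continuous surjection between compact Hausdorff spaces it is closed, hence a quotient map.

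For the forward implication, suppose a homeomorphic extension $F\colon\ovOm\to\overline{\Om'}$ exists. The two maps $p_{\Om'}\circ T^*$ and $F\circ p_\Om$ are continuous maps $\Om^*\to\overline{\Om'}$, and on the dense subset $\hat{\Om}$ they agree, since $p_{\Om'}(T^*\hat{x})=p_{\Om'}(\widehat{f(x)})=f(x)=F(x)=F(p_\Om(\hat{x}))$. Hence they coincide on all of $\Om^*$. Evaluating at any $\chi\in\Phi_x$, for which $p_\Om(\chi)=x$, gives $p_{\Om'}(T^*\chi)=F(x)$, that is $T^*\chi\in\Phi_{F(x)}$, so $T^*(\Phi_x)\subseteq\Phi_{F(x)}$. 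Applying the identical reasoning to the quasiconformal map $f^{-1}$, its extension $F^{-1}$, and $(T^*)^{-1}$ yields the reverse inclusion, and therefore $T^*(\Phi_x)=\Phi_{F(x)}$ for every $x\in\bd\Om$.

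For the converse, assume that for each $x\in\bd\Om$ there is a $y\in\bd\Om'$ with $T^*(\Phi_x)=\Phi_y$; the point $y$ is unique because distinct boundary points of $\Om'$ have distinct fibers. Then $T^*$ carries fibers of $p_\Om$ to fibers of $p_{\Om'}$, so the continuous map $p_{\Om'}\circ T^*\colon\Om^*\to\overline{\Om'}$ is constant on each fiber $p_\Om^{-1}(x)$. Because $p_\Om$ is a quotient map, this map descends to a unique continuous $F\colon\ovOm\to\overline{\Om'}$ with $F\circ p_\Om=p_{\Om'}\circ T^*$; reading this off on $\hat{\Om}$ shows $F|_\Om=f$, and reading it off on $\Phi_x$ gives the asserted formula $F(x)=y$. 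Running the same descent with $(T^*)^{-1}$ produces a continuous $G\colon\overline{\Om'}\to\ovOm$ extending $f^{-1}$; since $G\circ F$ and $F\circ G$ restrict to the identity on the dense sets $\Om$ and $\Om'$ respectively, continuity forces them to be the identity everywhere, so $F$ is a homeomorphism.

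The main obstacle, and the step I would justify most carefully, is the existence and continuity of the projection $p_\Om\colon\Om^*\to\ovOm$ together with the identification of its fibers with the sets $\Phi_x$: this is the only place where the interplay between the weak$^*$ topology of the Royden compactification and the Euclidean topology of $\ovOm$ enters, and it is precisely what converts the weak$^*$-homeomorphism $T^*$ into a Euclidean homeomorphism $F$. Once $p_\Om$ and $p_{\Om'}$ are available as quotient maps, both implications become formal diagram chases, and the ``moreover'' clause is automatic from the commuting relation $F\circ p_\Om=p_{\Om'}\circ T^*$.
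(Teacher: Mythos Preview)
The paper does not prove this theorem at all: it is quoted verbatim from Soderborg~\cite{sod} as a background tool, with no argument supplied. There is therefore nothing in the paper to compare your proposal against.

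That said, your proof is sound as an independent argument, and in spirit it is the standard one: once the canonical projection $p_\Om\colon\Om^*\to\ovOm$ is in hand as a quotient map whose point-preimages are exactly the fibers $\Phi_x$, both implications are formal descent/density arguments, as you observe. Two small remarks. First, you assume $\Om$ bounded in order to place the coordinate functions in $\mathcal{A}(\Om)$; in Soderborg's setting the domains need not be bounded, so one either works in the one-point compactification $\overline{\R^n}$ or uses truncated coordinate functions (which still separate points of $\ovOm$) to build $p_\Om$. Second, in the converse direction your application of the same descent to $(T^*)^{-1}$ tacitly uses that the hypothesis is symmetric, i.e.\ that for every $y\in\bd\Om'$ there is an $x\in\bd\Om$ with $(T^*)^{-1}(\Phi_y)=\Phi_x$; this does follow, since $T^*$ is a bijection $\Delta_\Om\to\Delta_{\Om'}$ and the fibers partition each ideal boundary, but it is worth saying explicitly.
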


 We are in a position to present the main result of this section (cf. Theorem 7.4 in \cite{sod} for a different prime ends theory).

 \begin{theorem}\label{thm-Royden-prime}
  Let $\Om\subset \R^n$ be a domain finitely connected at the boundary. Then, for every point $x\in \bd \Om$ it holds that the set of components of a fiber $\Phi_x$ coincides with the set of prime ends with impressions $\{x\}$.
 \end{theorem}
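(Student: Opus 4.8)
The plan is to identify both the components of $\Phi_x$ and the prime ends with impression $\{x\}$ with the threads of the inverse system of local components of $\Om$ at $x$, passing between the prime‑end picture and the Royden picture by means of finite‑energy, locally constant functions in the Royden algebra. Throughout I use that, by Theorem~\ref{thm-fin-con-homeo}, every prime end has a singleton impression, so the prime ends ``at $x$'' are exactly those with impression $\{x\}$, i.e. the elements of $\bdySP\Om$ lying over $x$. First I would fix the local combinatorics: by finite connectedness at the boundary (Definition~\ref{def-fin-con}) choose a decreasing sequence of neighbourhoods $U_k\subset B(x,1/k)$ of $x$ with each $U_k\cap\Om$ having only finitely many components $C_k^1,\dots,C_k^{m_k}$, and pick a ball $B(x,\rho_k)\subset U_k$. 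Since $\Om\subset\R^n$ is locally connected, each $C_k^j$ is relatively clopen in $U_k\cap\Om$, so the function equal to a Lipschitz cutoff $\phi_k$ (with $\phi_k=1$ on $B(x,\rho_k/2)$, $\phi_k=0$ off $B(x,\rho_k)$) on $C_k^j$ and to $0$ on the other components extends by $0$ to a bounded continuous $u_k^j$ on $\Om$ (continuity holds because $\phi_k$ vanishes near $\partial U_k$); its gradient is supported in the shell $\{\rho_k/2\le|y-x|\le\rho_k\}$, away from $x$, so $u_k^j\in\mathcal{A}(\Om)$, and on $B(x,\rho_k/2)\cap\Om$ it is the indicator of $C_k^j$.

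Next I would localize the fiber. By definition each $\chi\in\Phi_x$ is the weak$^*$ limit of a Royden net $(y_\alpha)$ with $y_\alpha\to x$; since $(y_\alpha)$ eventually lies in $B(x,\rho_k/2)$, where $u_k^j\in\{0,1\}$, the value $\chi(u_k^j)=\lim_\alpha u_k^j(y_\alpha)$ lies in the discrete set $\{0,1\}$ and equals $1$ for exactly one index $j=j_k(\chi)$; set $C_k(\chi):=C_k^{j_k(\chi)}$, so $(y_\alpha)$ is eventually in $C_k(\chi)$ and $x\in\overline{C_k(\chi)}$. As each $\chi\mapsto\chi(u_k^j)$ is weak$^*$ continuous with values in $\{0,1\}$, the index $j_k$ is locally constant on $\Phi_x$, hence constant on every component $K$ of $\Phi_x$. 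The nested acceptable sets $C_1(K)\supset C_2(K)\supset\cdots$ have $\diam\to0$ and impression $\{x\}$; after passing to a subsequence of the $U_k$ they satisfy Definition~\ref{def-chain} and, by the description of prime ends in finitely connected domains (\cite[Section~10]{abbs} and Theorem~\ref{thm-fin-con-homeo}), represent a prime end $P(K)$ with $I[P(K)]=\{x\}$. This defines the map $K\mapsto P(K)$.

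Finally I would prove $K\mapsto P(K)$ is a bijection. For a prime end $P$ at $x$ take a representative chain $\{E_k\}$ with each $E_k$ a component of $B(x,r_k)\cap\Om$, and set $\Phi_x^P:=\bigcap_k\overline{\hat E_k}$, with closures in $\Om^*$. This is a decreasing intersection of nonempty connected compacta, hence nonempty and connected, and each of its points projects to $x$, so $\Phi_x^P\subset\Phi_x$. For $\chi\in\Phi_x^P$, comparing $\{E_k\}$ with the localizing chain $\{C_m(\chi)\}$ — both contain the tail of the defining net, and the nesting of ball‑ and neighbourhood‑components yields mutual division — gives $P(\chi)=P$; hence $\Phi_x^P$ is precisely the set of $\chi$ with $P(\chi)=P$. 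Being connected and nonempty, $\Phi_x^P$ lies in a single component $K$, and any component with $P(K)=P$ contains $\Phi_x^P$ and so equals this $K$; this gives surjectivity and injectivity simultaneously.

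I expect the principal difficulty to lie in the localization step and its interface with Soderberg's net formalism: one must verify that weak$^*$ convergence of a Royden net to $x$ genuinely forces eventual membership in one local component (the discreteness argument $\chi(u_k^j)\in\{0,1\}$ for \emph{nets}, together with the separation of $\Phi_x$ by the $u_k^j$), and one must import from \cite[Section~10]{abbs} the precise fact that prime ends with impression $\{x\}$ in a finitely connected domain are represented by nested ball‑component chains, so that the combinatorial inverse system is faithfully matched to $\bdySP\Om$. The remaining ingredients — the finite‑energy cutoffs, the connectedness of decreasing intersections of continua, and the local constancy of $j_k$ on components — are then routine.
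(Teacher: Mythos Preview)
Your argument is correct, but it follows a genuinely different route from the paper's proof. The paper works entirely through Soderberg's black boxes: it quotes \cite[Lemma 7.1]{sod} for the existence of the unique component $Q(U,\chi)$ eventually containing every Royden net for $\chi$, and \cite[Theorem 7.2]{sod} for the statement that two points of $\Phi_x$ lie in the same component if and only if $Q(U,\chi)=Q(U,\eta)$ for every neighbourhood $U$. With those in hand it defines $R:[E_n]\mapsto\Psi_x$ directly via the condition $Q(U,\chi)=G_{j_k}(r_k)$, checks well-definedness and injectivity by citing Theorem 7.2, and for surjectivity picks $\chi$ in a given component, forms $Q_j=Q(B(x,1/j),\chi)$, threads a path $\gamma$ through points $x_j\in Q_j$, and invokes Lemma~\ref{lem10-5} (and accessibility through the resulting end-cut) to manufacture the prime end. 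You, by contrast, never cite Soderberg's component lemma or his characterization of components: your explicit cut-off indicators $u_k^j\in\mathcal{A}(\Om)$, the observation $\chi(u_k^j)\in\{0,1\}$, and the weak$^*$ continuity of evaluation essentially \emph{reprove} those results on the spot and give you the map $K\mapsto P(K)$ directly. For the inverse direction you replace the paper's end-cut/Lemma~\ref{lem10-5} construction by the compactness argument $\Phi_x^P=\bigcap_k\overline{\hat E_k}$ in $\Om^*$, using that a decreasing intersection of connected compacta is connected. Your approach is more self-contained and makes the role of the Royden algebra transparent; the paper's is shorter precisely because it outsources the hard step to \cite[Theorem 7.2]{sod}. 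One small point to tighten: the phrase ``both contain the tail of the defining net'' is slightly loose, since $\chi\in\bigcap_k\overline{\hat E_k}$ does not a priori hand you a single net eventually in every $E_k$; the clean fix is to argue via your functions $u_m^j$ (from $\chi(u_m^{j_m})=1$ and $\chi\in\overline{\hat E_k}$ one finds $y\in E_k\cap C_m(\chi)$ for large $k$, hence $E_k\subset C_m(\chi)$), which you have already set up.
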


 In the case of John domains, Theorem~\ref{thm-Royden-prime} allows us to provide the following estimate for a number of components for fibers in the Royden compactification.

 \begin{cor}\label{cor-royden-john}
  Let $\Om\subset \R^n$ be a John domain with the John constant $C_\Om$. Then, for any $x\in \bd \Om$ the number of components of a fiber $\Phi_x$ is at most $N(n, C_\Om)$.
 \end{cor}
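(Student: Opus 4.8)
The plan is to combine Theorem~\ref{thm-Royden-prime} with a cigar-and-packing estimate tailored to John domains. By Theorem~\ref{thm-Royden-prime} the components of the fiber $\Phi_x$ are in bijection with the prime ends of $\Om$ whose impression is $\{x\}$, so it suffices to bound the number of such prime ends by a constant depending only on $n$ and $C_\Om$. Since a John domain is finitely connected at the boundary (Example~\ref{ex-domains}(1)), Theorem~\ref{thm-fin-con-homeo} guarantees that every such prime end is a singleton, and by Remark~\ref{rem-Thm1-homeo-ext}(2) any representing chain $\{E_k\}$ satisfies $\diam E_k\to 0$. Hence for small $r$ each representative tail lies in a single component of $B(x,r)\cap\Om$ whose closure contains $x$. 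Using the structure theory for domains finitely connected at the boundary (Sections~10--11 in \cite{abbs}), I would first verify that, once $r$ is small enough, the resulting assignment (prime end $\mapsto$ component) is injective, so that the number of prime ends with impression $\{x\}$ is at most the number $n(r)$ of components of $B(x,r)\cap\Om$ having $x$ in their closure. It then remains to bound $n(r)$ uniformly in small $r$.

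The quantitative heart is the packing estimate. Fix $r<|x-x_0|$, where $x_0$ is the John center, and let $U_1,\dots,U_m$ be the distinct components of $B(x,r)\cap\Om$ with $x\in\overline{U_i}$. For each $i$ pick $p_i\in U_i$ with $|p_i-x|<r/4$ and let $\gamma_i$ be the John curve from $p_i$ to $x_0$; since $x_0\notin B(x,r)$, the curve must leave $B(x,r)$, so there is a first parameter $t^*$ with $q_i:=\gamma_i(t^*)$ satisfying $|q_i-x|=r/2$, and $\gamma_i([0,t^*])\subset B(x,r/2)\cap\Om$ stays in $U_i$ (components of the open set $B(x,r)\cap\Om$ are relatively closed). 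The John inequality $\dist(\gamma_i(t),\bd\Om)\ge t/C_\Om$ together with $t^*\ge|q_i-p_i|>r/4$ gives $\dist(q_i,\bd\Om)>r/(4C_\Om)$. Writing $\rho:=r/(4C_\Om)$, the ball $B(q_i,\rho)$ lies in $\Om$, is contained in $B(x,r)$ (as $|q_i-x|=r/2$ and $\rho\le r/4$), is connected, and meets $U_i$, whence $B(q_i,\rho)\subset U_i$. As the $U_i$ are pairwise disjoint, so are the balls $B(q_i,\rho)$, and comparing $n$-dimensional Lebesgue measures inside $B(x,r)$ yields
\[
 m\left(\frac{r}{4C_\Om}\right)^{\!n}\le r^n,\qquad \hbox{hence}\qquad m\le (4C_\Om)^n.
\]
Since this bound is independent of $r$, one may take $N(n,C_\Om):=(4C_\Om)^n$.

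I expect the main obstacle to be the structural step rather than the estimate: one must check that distinct prime ends with the common impression $\{x\}$ are eventually carried into distinct components of $B(x,r)\cap\Om$, and that conversely no component ``thread'' is lost, so that counting prime ends genuinely reduces to counting these components. This rests on the correspondence between prime ends and component threads at a boundary point of a finitely connected domain from \cite{abbs}; concretely, I would show that every component of $B(x,r)\cap\Om$ touching $x$ contains, at any smaller scale $r'<r$, at least one component touching $x$ (again via the John curves), so that $n(r)$ is monotone in $r$ and stabilizes as $r\downarrow 0$ to the number of prime ends with impression $\{x\}$. Once this bookkeeping is secured, the elementary cigar-plus-packing bound above supplies the dependence on $n$ and $C_\Om$ and completes the proof.
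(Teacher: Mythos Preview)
Your argument is correct and yields the explicit bound $N(n,C_\Om)=(4C_\Om)^n$, but it takes a more hands-on route than the paper. The paper's proof is two citations: Theorem~11.3 in \cite{abbs} already asserts that a John domain in $\R^n$ is at most $N$-connected at every boundary point with $N=N(n,C_\Om)$ (your cigar-and-packing estimate is essentially a streamlined reproof of this), and Proposition~10.13 in \cite{abbs} then says that for a domain finitely connected at the boundary, each $x\in\bd\Om$ is the impression of at most as many prime ends as there are local components at $x$. Combined with Theorem~\ref{thm-Royden-prime}, this finishes the proof immediately.

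The ``structural step'' you flag as the main obstacle---that distinct prime ends with impression $\{x\}$ land in distinct components of $B(x,r)\cap\Om$ for small $r$, and that the component count stabilizes---is exactly the content of Proposition~10.13 (and the surrounding Lemmas~10.5--10.6) in \cite{abbs}; you need not redo it. What your approach buys is an explicit constant and self-containment; what the paper's approach buys is brevity, since both ingredients are already available in \cite{abbs}. If you keep your packing argument, it is worth noting that you use the arc-length form of the John condition (so that $t^\ast\ge|q_i-p_i|$), and that $\gamma_i([0,t^\ast])\subset\overline{B(x,r/2)}\cap\Om\subset B(x,r)\cap\Om$ is the connected set forcing $q_i\in U_i$.
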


 \begin{proof}
  Theorem 11.3 in \cite{abbs} asserts that for a John domain $\Om\subset \R^n$ there exists a constant $N$, depending only on the doubling constant $C_\mu=2^n$, the John constant $C_\Om$ and the quasiconvexity constant, such that $\Om$ is at most $N$-connected at every boundary point. Then, Proposition 10.13 in \cite{abbs} gives us that every point $x\in \bd \Om$ is the impression of exactly $N$ distinct prime ends.
  Moreover, there is no other prime end with $x$ in its impression. These propositions together with Theorem~\ref{thm-Royden-prime} result in the assertion of the corollary.
  \end{proof}

 The proof of Theorem~\ref{thm-Royden-prime} requires the following observation, specialized to the Euclidean setting, about the structure of prime ends for domains finitely connected at the boundary. One can think about the lemma as a counterpart of the construction of canonical prime ends obtained in Corollary~\ref{cor-loc-conn} for the setting of domains locally connected at the boundary. The main difference is that now every boundary point can be the impression of more than one prime end.

 \begin{lem}[Lemmas 10.5 and 10.6 in \cite{abbs}]\label{lem10-5}
 Assume that a domain $\Om\subset \R^n$ is finitely connected at $x_0\in\bd \Om$. Let $A_k\subsetneq\Om$ be such that:
\begin{enumerate}
\item $A_{k+1}\subset A_k$,
\item $x_0\in \overline{A}_k$,
\item $\dist(x_0,\Om\cap\bd A_k)>0$ for each $k=1,2,\ldots$.
\end{enumerate}
Furthermore, let $0<r_k<\dist(x_0,\Om\cap\bd A_k)$ be a sequence decreasing to zero.

Then for each $k=1,2,\ldots$ there is a component $G_{j_k}(r_k)$ of $B(x_0, r_k) \cap \Om$ intersecting $A_l$ for each $l=1,2,\ldots$, and such that $x_0\in \overline{G_{j_k}(r_k)}$ and $G_{j_k}(r_k) \subset A_k$.

Moreover, there exists a prime end $[F_k]$  such that  $I[F_k]=\{x_0\}$,
$F_k=G_{j_k}(r_k)$ for some
$1\le j_k\le N(r_k)$ and
$F_{k}\subset A_k$, $k=1,2,\ldots$.

\end{lem}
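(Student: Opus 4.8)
The plan is to build the required prime end from the local component structure of $\Om$ near $x_0$, whose finiteness is exactly what finite connectedness provides. First I would record the basic separation property. Fix $k$. Since $r_k<\dist(x_0,\Om\cap\bd A_k)$, the ball $B(x_0,r_k)$ is disjoint from $\Om\cap\bd A_k$; hence by Remark~\ref{rem-connected-diam} any component $G$ of $B(x_0,r_k)\cap\Om$ that meets $A_k$ must lie entirely in $A_k$, as a connected set meeting both $A_k$ and $\Om\setminus A_k$ would have to meet $\Om\cap\bd A_k$. By finite connectedness at $x_0$, only finitely many components of $B(x_0,r_k)\cap\Om$ — at most $N(r_k)$ — have $x_0$ in their closure; call these the \emph{admissible} components at level $k$. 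Because $x_0\in\overline{A}_k$, points of $A_k$ accumulate at $x_0$, so at least one admissible component lies in $A_k$.

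The key step is to force an admissible component in $A_k$ to meet \emph{every} $A_l$. For fixed $k$, each admissible $G\subset A_k$ determines the set $S(G)=\{\,l\ge k:G\cap A_l\neq\emptyset\,\}$, which is an initial segment of $\{k,k+1,\dots\}$ since the $A_l$ decrease. If every admissible $G\subset A_k$ had finite $S(G)$, then, taking $L$ larger than the finitely many maxima, no admissible component would meet $A_L$; but choosing $a_m\in A_L$ with $a_m\to x_0$ and applying the pigeonhole principle — infinitely many $a_m$ lie in a single component of $B(x_0,r_k)\cap\Om$, which then has $x_0$ in its closure, lies in $A_k$, and meets $A_L$ — produces an admissible component meeting $A_L$, a contradiction. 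Hence some admissible $G\subset A_k$ meets all $A_l$; this is the desired $G_{j_k}(r_k)$, with $x_0\in\overline{G_{j_k}(r_k)}$ and $G_{j_k}(r_k)\subset A_k$, proving the first assertion.

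To obtain the chain $\{F_k\}$ I would organise these \emph{threading} components into a rooted tree: the nodes at level $k$ are the finitely many admissible components of $B(x_0,r_k)\cap\Om$ that lie in $A_k$ and meet all $A_l$, and each such $G'$ is joined to its unique parent $G\supset G'$ one level up — indeed the connected set $G'\subset B(x_0,r_{k+1})\subset B(x_0,r_k)$ lies in a single component of $B(x_0,r_k)\cap\Om$, which is again threading. This tree is infinite with every level nonempty and finitely branching, so K\"onig's lemma yields a nested branch $F_1\supset F_2\supset\cdots$ with $F_k=G_{j_k}(r_k)$. Then $\{F_k\}$ satisfies Definition~\ref{def-chain}: nesting is built in; the impression is $\bigcap_k\overline{F_k}\subset\bigcap_k\overline{B(x_0,r_k)}=\{x_0\}$ and contains $x_0$, so $I[F_k]=\{x_0\}\subset\bd\Om$; and since each $F_k$ is a component of the open set $B(x_0,r_k)\cap\Om$, its relative boundary $\Om\cap\bd F_k$ lies on the sphere $\{|y-x_0|=r_k\}$, whence $\Om\cap\bd F_{k+1}$ and $\Om\cap\bd F_k$ are at Euclidean distance at least $r_k-r_{k+1}$ and, by~\eqref{metric-rel}, $\distMa(\Om\cap\bd F_{k+1},\Om\cap\bd F_k)\ge r_k-r_{k+1}>0$. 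Finally, the singleton impression $I[F_k]=\{x_0\}$ makes $[F_k]$ a prime end by Proposition 7.1 in~\cite{abbs}, recalled in Remark~\ref{rem-Thm1-homeo-ext}(1).

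I expect the main obstacle to be the threading step — coercing one admissible component to meet all of the $A_l$ simultaneously — together with the passage from the per-level choices to a genuinely nested chain; the pigeonhole-plus-K\"onig argument above is what makes both work, and it is precisely here that finite connectedness (bounding the number of admissible components) is indispensable. The verification of the remaining chain axioms and of primeness is then routine.
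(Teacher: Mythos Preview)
The paper does not supply a proof of this lemma; it is quoted verbatim as Lemmas~10.5 and~10.6 of~\cite{abbs} and used as a black box. So there is no in-paper argument to compare against, and your task reduces to whether your proposal actually establishes the stated conclusion.

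Your argument is essentially correct and is in the spirit of the original proof in~\cite{abbs}: exploit the finiteness of components near $x_0$ to pigeonhole a ``threading'' component at each scale, then use K\"onig's lemma to extract a nested branch, and finally verify the chain axioms and invoke the singleton-impression criterion for primeness. Two small points are worth tightening. First, finite connectedness at $x_0$ does not literally say that $B(x_0,r_k)\cap\Om$ has finitely many components with $x_0$ in their closure; it gives an open $U_k\ni x_0$ with $U_k\subset B(x_0,r_k)$ and $U_k\cap\Om$ having finitely many components, from which your claim follows (each component of $B(x_0,r_k)\cap\Om$ touching $x_0$ must swallow at least one component of $U_k\cap\Om$). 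This same $U_k$ is what makes your pigeonhole step work: the points $a_m\to x_0$ eventually lie in $U_k\cap\Om$, hence in one of its finitely many components. Second, your ``tree'' is a priori a forest (several level-$1$ roots); adding a virtual root, or arguing directly by the finite-level/nonempty-level compactness, fixes this. The verification that $\Om\cap\bd F_k$ lies on the sphere of radius $r_k$, and hence that condition~\eqref{pos-dist} of Definition~\ref{def-chain} holds with the explicit lower bound $r_k-r_{k+1}$, is correct and clean.
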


 In the proof below we follow the approach of the proof for Theorem 7.4 in Soderborg~\cite{sod}. However, we employ the prime ends as described in Section~\ref{sec-pet}, especially the structure of the prime ends boundary for domains finitely connected at the boundary.

 \begin{proof}[Proof of Theorem~\ref{thm-Royden-prime}]
  Recall that by Theorem~\ref{thm-fin-con-homeo} every $x\in\bd \Om$ is an impression of some prime end $[E_n]\in \bdyP \Om$ and all prime ends are singletons, i.e. $\bdyP \Om=\bdySP \Om$. We define a map $R:\bdyP \Om\to \Delta$ as follows:
   \[
   R([E_n])=\Psi_x,
   \]
  for $\Psi_x$ satisfying the following two conditions:
  \begin{enumerate}
  \item $\pi(\Psi_x)=I[E_n]=\{x\}\subset \bd \Om$, where $\pi:\Om^*\to \overline{\Om}$ is a natural continuous projection map, surjective from $\Delta$ onto $\bd \Om$ (see Section 4 and Theorems 4.2 and 4.3 in \cite{sod}).
  \item for every $\chi\in\Psi_x$ and for every neighborhood $U$ of $x$ (in $\R^n$) it holds that
  \[
   Q(U,\chi)=G_{j_k}(r_k),\qquad \hbox{ for some }j_k\in \N, r_k>0
  \]
   where $Q(U, \chi)$ is a unique component of $U\cap \Om$ such that each Royden net corresponding to $\chi$ lies eventually in $Q(U, \chi)$ (the existence of such a neighborhood is proved in \cite[Lemma 7.1]{sod}) and $G_{j_k}(r_k)$ is one of the sets constructed in Lemma~\ref{lem10-5}.
   \end{enumerate}

 Theorem 7.2 in \cite{sod} stays that two elements $\chi, \eta \in \Phi_x$ belong to the same component of $\Phi_x$ if and only if $Q(U, \chi)=Q(U, \eta)$ for each neighborhood $U$ of $x$ in $\R^n$. Moreover, equivalent chains of the given prime end have the same impressions. Therefore, the map $R$ is well defined and injective
 (by \cite[Theorem 7.2]{sod} and Theorem~\ref{thm-fin-con-homeo}). The proof will be completed once we show that $R$ is onto. Let $\Psi$ be a component of $\Psi_x$ and $\chi \in \Psi$. Similarly to construction in Lemma~\ref{cor-loc-conn} and Theorem~\ref{thm-fin-con-homeo} we consider a sequence of balls centered at $x$ and related sequence of open connected subsets of $\R^n$
 \begin{align*}
  &Q_j:=Q(B(x, 1/j), \chi) \\
  &Q_{j+1}\subset Q_j \qquad \hbox{for } j=1,2,\ldots.
 \end{align*}
 Let us choose a sequence of points $x_j\in Q_j$ for $j=1,2,\ldots$ and join every $x_j$ with $x_{j+1}$ by a path $\ga_j$. The resulting path $\ga$ has one endpoint at $x_1$ and $\ga(x_j)\to x$ for $j\to \infty$. Next, Lemma~\ref{lem10-5} provides us with the construction of prime end $[F_k]$ with $F_k=G_{j_k}(r_k)$ for $k=1,2,\ldots$ and $I[F_k]=\{x\}$. This together with the definition of $\ga$ imply that $[F_k]$ is accessible through $\ga$.

 Finally, for any neighborhood $U$ of $x$ it holds that $Q_m\subset U$ for $m$ large enough. Moreover, $Q(U, \chi)$ is a component of $U$ containing $Q_m$. On the other hand, $Q_m$ contains some subpath of $\ga$, and hence one of the acceptable sets $G_{j_l}(r_l)$ of prime end $[F_k]$ is a component of $U\cap \Om$ containing $Q_m$. In a consequence
 $Q(U,\chi)=G_{j_l}(r_l)$ and the proof is completed.
\end{proof}

The following observation is an immediate consequence of Theorem
\ref{thm51-sod} and the proof of Theorem~\ref{thm-Royden-prime}.

\begin{cor}
 Let $\Om\subset \R^n$ be a domain finitely connected at the boundary
and $f:\Om\to\Om'$ be a quasiconformal map. If $f$ extends
homeomorphically to a map $F:\overline{\Om}\to \overline{\Om'}$,
then there exists a correspondence between prime ends with
impression at a point $x\in \bd \Om$ and prime ends with impression
at point $F(x)\in \bd \Om'$.
\end{cor}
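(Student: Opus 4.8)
The plan is to realize the asserted correspondence as a composition of three bijections, combining Theorem~\ref{thm51-sod} with the identification of components of fibers and prime ends coming from the proof of Theorem~\ref{thm-Royden-prime}. As a preliminary step I would note that $\Om'$ is itself finitely connected at the boundary: since $f$ extends to a homeomorphism $F:\ovOm\to\overline{\Om'}$ and $\Om$ is finitely connected at the boundary, this property is transferred to $\Om'$ through $F$ (cf.\ Remark 17.8 in \cite{va1}), so that Theorem~\ref{thm-Royden-prime} is applicable to both $\Om$ and $\Om'$.

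Next, fix $x\in\bd\Om$ and put $y:=F(x)\in\bd\Om'$. Since $f$ is quasiconformal and extends homeomorphically, Theorem~\ref{thm51-sod} guarantees that the adjoint $T^{*}$ of the induced Royden algebra isomorphism $T=f^{*}$ is a homeomorphism of the Royden compactifications with $T^{*}(\Phi_x)=\Phi_y$. As $T^{*}$ is a homeomorphism, it maps connected components of $\Phi_x$ onto connected components of $\Phi_y$, and hence induces a bijection between the set of components of $\Phi_x$ and the set of components of $\Phi_y$.

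It then remains to splice in the correspondence $R$ furnished by Theorem~\ref{thm-Royden-prime}. Applied to $\Om$ at $x$, that theorem yields a bijection between the components of $\Phi_x$ and the prime ends of $\Om$ with impression $\{x\}$; applied to $\Om'$ at $y$, it yields a bijection between the components of $\Phi_y$ and the prime ends of $\Om'$ with impression $\{y\}$. Composing these two bijections with the one induced by $T^{*}$ produces the desired correspondence between prime ends of $\Om$ with impression at $x$ and prime ends of $\Om'$ with impression at $F(x)$.

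The steps that a homeomorphism preserves connected components and that bijections compose to a bijection are routine. The one place where the hypotheses are used essentially is the applicability of Theorem~\ref{thm-Royden-prime} on the target side, i.e.\ the transfer of finite connectivity at the boundary from $\Om$ to $\Om'$ via $F$; I expect this to be the main (albeit mild) obstacle, and it is precisely where the assumption that $f$ extends homeomorphically enters the argument.
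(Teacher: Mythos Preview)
Your proposal is correct and follows essentially the same approach as the paper: the paper likewise defines the correspondence as the composition $R_{\Om'}^{-1}\circ T^{*}\circ R_{\Om}$, appealing to the bijections $R_{\Om}$, $R_{\Om'}$ from Theorem~\ref{thm-Royden-prime} and the homeomorphism $T^{*}$ between Royden compactifications, and it also records (just before the proof) that finite connectedness at the boundary passes to $\Om'$ via the homeomorphic extension $F$. Your version is a bit more explicit about why $T^{*}$ carries components of $\Phi_x$ to components of $\Phi_y$, but the argument is otherwise identical.
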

 In other words, prime ends associated with a given boundary point in $\bd \Om$ may not be mapped to prime ends with impressions at different points in $\bd \Om'$.

 Moreover, note that in the above corollary, the existence of a homeomorphic extension to topological closures of domains implies that $\Om'$ is finitely connected at the boundary.

\begin{proof}
 We utilize map $R$ studied in the proof of Theorem~\ref{thm-Royden-prime} and define a mapping $F_{\rm P}:\bdyP \Om \to \bdyP \Om'$ by the following formula
\[
  F_{\rm P}([E_n])=R_{\Om'}^{-1}\circ T^*\circ R_{\Om}([E_n]),\quad \hbox{for any }\, [E_n]\in \bdyP \Om.
\]
 By the above discussion $T^*$ is a homeomorphism (an extension of
$f$) between Royden compactifications $\Om^{*}$ and $(\Om')^*$ and
since mappings $R_\Om$ and $R_{\Om'}$ are bijections, the proof of
the corollary is completed.
\end{proof}


\end{document}